\documentclass[11pt,a4paper]{amsart}
\usepackage{epsfig}
\usepackage{amsmath}
\usepackage{amsthm}
\usepackage{amssymb}
\usepackage{bm}
\usepackage{tikz}
\usepackage{PTSansNarrow}
\usepackage[T1]{fontenc}
\usetikzlibrary{matrix}
\usepackage{graphicx}
\newtheorem{theorem}{Theorem}[section]
\newtheorem{lemma}[theorem]{Lemma}

 \numberwithin{dummy}{section}
\newtheorem{algorithm}{Algorithm}
\newtheorem*{algorithm-PF}{Primal Formulation}
\newtheorem*{algorithm-PMF}{Primal-Mixed Formulation}
\newtheorem*{algorithm-DMF}{Dual-Mixed Formulation}
\newtheorem*{algorithm-PF-C}{Conforming Finite Element Method}
\newtheorem*{algorithm-PMF-C}{Conforming Primal-Mixed Finite Element Method}
\newtheorem*{algorithm-DMF-C}{Conforming Mixed Finite Element Method}
\newtheorem*{algorithm-hmfem}{Hybridized Mixed Finite Element Method}
\newtheorem*{algorithm-primal-wgfem}{Primal WG-FEM}
\newtheorem*{algorithm-primalmixed-wgfem}{Primal-Mixed WG-FEM}
\newtheorem*{algorithm-mixed-wgfem}{Mixed WG-FEM}
\newtheorem*{algorithm-hmwgfem}{Hybridized Mixed WG-FEM}
\newtheorem*{algorithm-hdg}{Hybridizable Discontinuous Galerkin}
\numberwithin{equation}{section}

\newcommand{\bq}{{\bm q}}
\newcommand{\bn}{{\bm n}}
\newcommand{\bx}{{\bm x}}
\newcommand{\bu}{{\bm u}}
\newcommand{\bv}{{\bm v}}
\newcommand{\bV}{{\boldsymbol V}}
\newcommand{\bW}{{\boldsymbol W}}
\newcommand\subsetsim{\mathrel{%
  \ooalign{\raise0.2ex\hbox{$\subset$}\cr\hidewidth\raise-0.8ex\hbox{\scalebox{0.9}{$\sim$}}\hidewidth\cr}}}
\def\leqC{\lesssim}

\def\T{{\mathcal T}}

\def\bn{{\boldsymbol n}}
\def\bq{{\boldsymbol q}}

\def\boldeta{{\boldsymbol\eta}}
\def\bpsi{{\boldsymbol\psi}}

\def\bvarphi{{\boldsymbol\varphi}}
\def\bomega{{\boldsymbol\omega}}

\def\bbQ{\mathbb{Q}}

\def\mathcalQ{{\mathcal Q}}
\def\Wspace{{\mathbb{W}_\varepsilon(\Omega)}}

\def\HnHarmonic{\mathbb{H}_{\varepsilon n,0}(\Omega)}
\def\boldeta{\bm{\eta}}
\newcommand{\pT}{{\partial T}}
\newcommand{\curl}{{\nabla\times}}
\newtheorem{defi}{Definition}[section]
\def\3bar{{|\hspace{-.02in}|\hspace{-.02in}|}}
\setlength{\textwidth}{6truein} \setlength{\textheight}{8.5truein}
\voffset=-0.55truein
\hoffset=-0.65truein
\renewcommand{\ldots}{\dotsc}
\newcommand{\vertiii}[1]{{\left\vert\kern-0.25ex\left\vert\kern-0.25ex\left\vert #1
\right\vert\kern-0.25ex\right\vert\kern-0.25ex\right\vert}}
\setlength{\parskip}{1\parskip}

\theoremstyle{definition}
\newtheorem{definition}[theorem]{Definition}

\theoremstyle{remark}

\numberwithin{equation}{section}



\usepackage{graphicx}
\usepackage{caption,subcaption}
\captionsetup[subfigure]{margin=0pt, parskip=0pt, hangindent=0pt, indention=0pt,
 labelformat=parens, labelfont=rm}

\makeatletter
\newcommand{\tnorm}{\@ifstar\@tnorms\@tnorm}
\newcommand{\@tnorms}[1]{%
\left|\mkern-2.5mu\left|\mkern-2.5mu\left|
#1
\right|\mkern-2.5mu\right|\mkern-2.5mu\right|
}
\newcommand{\@tnorm}[2][]{%
\mathopen{#1|\mkern-2.5mu#1|\mkern-2.5mu#1|}
#2
\mathclose{#1|\mkern-2.5mu#1|\mkern-2.5mu#1|}
}
\makeatother

\usepackage{hyperref}

\makeatletter
\@namedef{subjclassname@2020}{\textup{2020} Mathematics Subject Classification}
\makeatother

\begin{document}
\title{A new numerical method for div-curl Systems with Low Regularity Assumptions}

\author{Shuhao Cao} \address{Department of Mathematics and Statistics, Washington University in St. Louis, St. Louis, MO 63130} \email{s.cao@wustl.edu} \thanks{The research of Shuhao Cao was partially supported by National Science Foundation Award DMS-1913080.}
\author{Chunmei Wang}
\address{Department of Mathematics \& Statistics, Texas Tech University, Lubbock, TX 79409, USA} \email{chunmei.wang@ttu.edu} \thanks{The research of Chunmei Wang was partially supported by National Science Foundation Award DMS-1849483.}
\author{Junping Wang}
\address{Division of Mathematical Sciences, National Science
Foundation, Alexandria, VA 22314}
\email{jwang@nsf.gov}
\thanks{The research of Junping Wang was supported by the NSF IR/D program, while working at
National Science Foundation. However, any opinion, finding, and
conclusions or recommendations expressed in this material are those
of the author and do not necessarily reflect the views of the
National Science Foundation.}

\begin{abstract} 
This paper presents a numerical method for div-curl systems with normal boundary conditions by using a finite element technique known as primal-dual weak Galerkin (PDWG). The PDWG finite element scheme for the div-curl system has two prominent features in that it offers not only an accurate and reliable numerical solution to the div-curl system under the low $H^\alpha$-regularity ($\alpha>0$) assumption for the true solution, but also an effective approximation of normal harmonic vector fields regardless the topology of the domain. Results of seven numerical experiments are presented to demonstrate the performance of the PDWG algorithm, including one example on the computation of discrete normal harmonic vector fields. 
\end{abstract}

\keywords{finite element methods, weak Galerkin methods, primal-dual weak Galerkin, div-curl system}

\subjclass[2020]{Primary 65N30, 35Q60, 65N12; Secondary 35F45, 35Q61}

\pagestyle{myheadings}

\maketitle

\section{Introduction}\label{Section:Introduction}
In this paper we are concerned with the development of new numerical methods for div-curl systems equipped with normal boundary conditions. For simplicity,  consider the model problem that seeks a vector field $\bm{u}=\bm{u}(\bx)$ satisfying
\begin{eqnarray}
\nabla\cdot(\varepsilon \bm{u})&=f,\qquad {\rm in}\  \Omega, \label{EQ:div-curl:div-eq}\\
\nabla\times \bm{u}&= \bm{g},\qquad {\rm in}\  \Omega, \label{EQ:div-curl:curl-eq}\\
\varepsilon \bm{u} \cdot \bm{n}&=\phi_1, \qquad {\rm on}\ \Gamma, \label{EQ:div-curl:normalBC}
\end{eqnarray}
where $\Omega\subset {\mathbb R}^3$ is a polyhedral domain that is bounded, open, and connected. $\Gamma=\partial \Omega$ is the boundary of $\Omega$ and is assumed to be the union of a finite number of disjoint surfaces $\Gamma=\bigcup_{i=0}^L\Gamma_i$, where $\Gamma_0$ is the exterior boundary of $\Omega$, $\Gamma_i \ (i=1,  \cdots, L)$ are the other connected components with finite surface areas, and $L$ denotes the number of holes in the domain $\Omega$ geometrically and is known as the second Betti number of $\Omega$ or the dimension of the second de Rham cohomology group of $\Omega$. The Lebesgue-integrable real-valued function $f = f(\bx)$ and the vector field $\bm{g} = \bm{g}(\bx)$ are given in the domain $\Omega$, the coefficient matrix $\varepsilon= \{\varepsilon_{ij}(\bx)\}_{3\times 3}$ is  symmetric and uniformly positive definite in $\Omega$,  and the entries $\varepsilon_{ij}(\bx)$ ($i, j=1,\cdots, 3$) are in $L^{\infty}(\Omega)$. The normal boundary data $\phi_1$ is a given distribution in $H^{-\frac12}(\Gamma)$.

The solution uniqueness for the normal boundary value problem \eqref{EQ:div-curl:div-eq}-\eqref{EQ:div-curl:normalBC} depends on the topology of the domain $\Omega$. It is well-known that the solution uniqueness holds true for simply connected $\Omega$, while the solution is unique up to a normal $\varepsilon$-harmonic function in $\mathbb{H}_{\varepsilon n, 0}(\Omega)$ defined in (\ref{harmo}) when the domain $\Omega$ is not simply connected. The dimension of $\mathbb{H}_{\varepsilon n,0}(\Omega)$ is identical to the first Betti number of $\Omega$, which is 
the rank of the first homology group of $\Omega$ or the dimension of the first de Rham cohomology group of $\Omega$. 

The div-curl system (\ref{EQ:div-curl:div-eq})-(\ref{EQ:div-curl:curl-eq}) arises in many applications such as electromagnetic fields and fluid mechanics. Computational electro-magnetics plays an important role in many areas of science and engineering such as radar, satellite, antenna design, waveguides, optical fibers, medical imaging and design of invisible cloaking devices \cite{r9}. In linear magnetic fields, the function $f(\bx)$ vanishes, $\bu$ represents the magnetic field intensity and $\varepsilon(\bx)$ is the inverse of the magnetic permeability tensor. In fluid mechanics fields, the coefficient matrix $\varepsilon(\bx)$ is diagonal with diagonal entries being the local mass density.  In electrostatics fields, $\varepsilon(\bx)$ is the permittivity matrix. 

Several numerical methods based on finite element approaches have been proposed and analyzed for the div-curl system (\ref{EQ:div-curl:div-eq})-(\ref{EQ:div-curl:curl-eq}). For example, a covolume method was developed in three dimensional space by using the Voronoi-Delaunay mesh pairs \cite{r17}. In \cite{r4}, a finite element method based on different test and trial spaces was analyzed for a div-curl system. In \cite{r11}, the authors introduced a mimetic finite difference scheme for the three dimensional magneto-static problems on general polyhedral partitions. In \cite{r8}, the authors developed a mixed finite element method for three dimensional axisymmetric div-curl systems through a dimension reduction technique based on the cylindrical coordinates in simply connected and axisymmetric domains. In \cite{r3}, the authors developed a least-squares finite element method for two types boundary value problems.  Another least-squares method was proposed for the div-curl problem based on discontinuous elements on nonconvex polyhedral domains in \cite{r2}.  In \cite{r23}, the authors proposed a weak Galerkin finite element method for the div-curl system with either normal or tangential boundary conditions.  Another weak Galerkin scheme was introduced in \cite{li} by using a least-squares approach for the div-curl problem. In \cite{wl}, the authors developed a primal-dual weak Galerkin finite element method for the div-curl system with tangential boundary conditions and proved that the scheme works well with low-regularity assumptions on the exact solution.

Two major challenges for the div-curl model problem (\ref{EQ:div-curl:div-eq})-(\ref{EQ:div-curl:normalBC}) are the low-regularity of the true solution $\bu$ and the difficulties in approximating the normal $\varepsilon$-harmonic vector space $\mathbb{H}_{\varepsilon n, 0}(\Omega)$ due to the topological complexity of the domain $\Omega$. The goal of this paper is to address both challenges through a new numerical method based on the primal-dual weak Galerkin (PDWG) finite element aproach originated in \cite{ww2016} and further developed in \cite{w2018, ww2018, w1, w2, hy1, hy2} for various model problems. Our PDWG numerical method for (\ref{EQ:div-curl:div-eq})-(\ref{EQ:div-curl:normalBC}) has two prominent features over the existing numerical methods: (1)  it offers an effective approximation of the normal $\varepsilon$-harmonic vector space $\mathbb{H}_{\varepsilon n, 0}(\Omega)$ regardless of the topology of the domain $\Omega$; and (2) it provides an accurate and reliable numerical solution for the div-curl system (\ref{EQ:div-curl:div-eq})-(\ref{EQ:div-curl:normalBC}) with low $H^\alpha$-regularity ($\alpha>0$) assumption for the true solution $\bu$. 

The paper is organized as follows. Section \ref{Section:2} is devoted to notations and the derivation of a weak formulation for the div-curl system \eqref{EQ:div-curl:div-eq}-\eqref{EQ:div-curl:normalBC} that involves no partial derivatives over the vector field $\bu$. Section \ref{Section:3} offers a brief review on the discrete weak gradient and discrete weak curl operators. Section \ref{Section:4} is dedicated to the presentation of the PDWG algorithm for the div-curl problem, together with an algorithm for computing discrete normal $\varepsilon$-harmonic vector fields.  
Section \ref{Section:5} is devoted to a discussion of the solution existence and uniqueness for the PDWG scheme. Section \ref{Section:6} contains a convergence theory for the PDWG approximation, and  Section \ref{Section:7} demonstrates the performance of the PDWG algorithm through seven test examples. 

\section{Notations and Preliminaries}\label{Section:2}

We follow the usual notation for Sobolev spaces and norms, see for example \cite{ciarlet, girault-raviart}. For an open bounded domain $D \subset {\mathbb R}^3$ with Lipschitz continuous boundary and any given real number $s\geq 0$, we use $\|\cdot\|_{s,D}$ and $|\cdot|_{s,D}$ to denote the norm and seminorm in the Sobolev space $H^s(D)$, respectively. The space $H^0(D)$ coincides with $L^2(D)$, for which the norm and the inner product are denoted by $\|\cdot\|_D$ and $(\cdot,\cdot)_{D}$, respectively.  We use $H(div_\varepsilon;D)$ to denote the closed subspace of $[L^2(D)]^2$ so that $\nabla\cdot(\varepsilon\bv)\in L^2(D)$. The space $H(div;D)$ corresponds to the case of $\varepsilon=I$. Analogously, we use $H(curl;D)$ to denote the closed subspace of $[L^2(D)]^2$ so that $\nabla\times\bv\in [L^2(D)]^3$. Denote by
$$
H_0(curl;D):=\{\bv\in H(curl; D),\ \bv\times\bn =0 \mbox{ on } \partial D\}
$$
the closed subspace with vanishing tangential boundary values. When $D=\Omega$, we shall drop the script $D$ in the notations. For simplicity, we shall use ``$\lesssim$ '' to denote ``less than or equal to up to a general constant independent of the mesh size or functions appearing in the inequality''.

Introduce the following Sobolev space
\begin{equation}\label{EQ:NewSSpace}
\Wspace=\{\bv\in H_0(curl)\cap H(div_\varepsilon), \ \nabla\cdot(\varepsilon\bv)=0,\ \langle \varepsilon\bv\cdot\bn_i, 1\rangle_{\Gamma_i}=0,\ i=1,\cdots, L\}.
\end{equation}
A vector field $\bv\in [L^2(\Omega)]^3$ is said to be $\varepsilon$-harmonic on $\Omega$ if it is $\varepsilon$-solenoidal and irrotational on $\Omega$. The space of normal $\varepsilon$-harmonic vector fields, denoted by $\mathbb{H}_{\varepsilon n,0}(\Omega)$, consists of all $\varepsilon$-harmonic vector fields satisfying the zero normal boundary condition; i.e.,
\begin{equation}\label{harmo}
\mathbb{H}_{\varepsilon n,0}(\Omega)=\{\bv\in [L^2(\Omega)]^3: \ \curl\bv=0,\
\nabla\cdot(\varepsilon \bv)=0,\ \varepsilon\bv\cdot \bn = 0 \mbox{ on } \Gamma\}.
\end{equation}
When $\varepsilon=I$ is the identity matrix, the space $\mathbb{H}_{\varepsilon n,0}(\Omega)$ shall be denoted as $\mathbb{H}_{n,0}(\Omega)$. Analogously, the space of tangential $\varepsilon$-harmonic vector fields, denoted by $\mathbb{H}_{\varepsilon \tau,0}(\Omega)$, consists of all $\varepsilon$-harmonic vector fields satisfying the zero tangential boundary condition; i.e.,
$$
\mathbb{H}_{\varepsilon \tau,0}(\Omega)=\{\bv\in [L^2(\Omega)]^3: \ \curl\bv=0,\
\nabla\cdot(\varepsilon \bv)=0,\ \bv\times\bn = 0 \mbox{ on } \Gamma\}.
$$

\subsection{A weak formulation}
By testing the equation \eqref{EQ:div-curl:div-eq} against any $\varphi\in H^1(\Omega)$ and then using the normal boundary condition (\ref{EQ:div-curl:normalBC}) we obtain
\begin{equation}\label{EQ:variational-form-1}
(\bm{u}, \varepsilon \nabla \varphi)  = \langle \phi_1, \varphi\rangle - (f,\varphi), \qquad\forall \varphi\in H^1(\Omega).
\end{equation}
Next, we test the equation \eqref{EQ:div-curl:curl-eq} against any $\bm{w}\in H_0(curl;\Omega)$ to obtain
\begin{eqnarray}\label{EQ:variational-form-2}
(\bm{u}, \nabla \times \bm{w}) = (\bm{g},\bm{w}), \qquad \forall \bm{w}\in H_0(curl;\Omega).
\end{eqnarray}
Summing the equations \eqref{EQ:variational-form-1} and \eqref{EQ:variational-form-2} gives the following
\begin{eqnarray*}
(\bm{u}, \varepsilon \nabla \varphi + \nabla \times \bm{w}) = (\bm{g},\bm{w})-(f,\varphi) + \langle \phi_1, \varphi\rangle
\end{eqnarray*}
for all $\varphi \in H^1(\Omega)$ and $\bm{w} \in H_0(curl;\Omega)$.

\begin{defi} A vector-valued function $\bm{u}\in [L^2(\Omega)]^3$ is said to be a weak solution of the normal boundary value problem for the div-curl system (\ref{EQ:div-curl:div-eq})-(\ref{EQ:div-curl:normalBC}) if it satisfies the following equation
\begin{eqnarray}\label{EQ:variational-form}
(\bm{u}, \varepsilon \nabla \varphi + \nabla \times \bpsi ) = (\bm{g},\bpsi)-(f,\varphi) + \langle \phi_1, \varphi\rangle
\end{eqnarray}
for all $\varphi \in H^1(\Omega)$ and $\bpsi \in H_0(curl;\Omega)$.
\end{defi}

The solution to the variational problem \eqref{EQ:variational-form} is generally non-unique. In fact, the homogeneous version of \eqref{EQ:variational-form} seeks $\bu\in [L^2(\Omega)]^3 $ satisfying
\begin{eqnarray}\label{EQ:variational-form-homo}
(\bm{u}, \varepsilon \nabla \varphi + \nabla \times \bpsi) = 0\qquad \forall \varphi\in H^1(\Omega),\ \bpsi\in H_0(curl;\Omega).
\end{eqnarray}
The equation \eqref{EQ:variational-form-homo} is easily satisfied by any $\varepsilon$-harmonic function $\bu=\boldeta\in \mathbb{H}_{\varepsilon n,0}(\Omega)$, and hence the solution non-uniqueness when the $\varepsilon$-harmonic space $\mathbb{H}_{\varepsilon n,0}(\Omega)$ has a positive dimension. The solution to the div-curl system with normal boundary condition is unique when the solution is further required to be $\varepsilon$-weighted $L^2$ orthogonal to $\mathbb{H}_{\varepsilon n,0}(\Omega)$.

\subsection{An extended weak formulation}
Denote by $H_{0c}^1(\Omega)$ the space of functions in $H^1(\Omega)$ with vanishing value on $\Gamma_0$ and constant values on other connected components of the boundary; i.e.,
\begin{equation*}\label{EQ:Nov-11-2014:H0c}
H_{0c}^1(\Omega)=\{\phi\in H^1(\Omega):\ \phi|_{\Gamma_0}=0, \
\phi|_{\Gamma_i}=\alpha_i, \ i=1,\ldots, L\}.
\end{equation*}
Introduce the following bilinear form:
\begin{equation}\label{EQ:div-curl:nbvp:bform}
B(\bu,s; \varphi,\bpsi): = (\bu, \varepsilon \nabla \varphi + \nabla \times \bpsi) + (\bpsi, \varepsilon\nabla s).
\end{equation}
The extended weak formulation for the normal boundary value problem of the div-curl system seeks $(\bu, s)\in [L^2(\Omega)]^3\times H_{0c}^1(\Omega)$ such that
\begin{equation}\label{EQ:weakform-4-nbvp-divcurl}
B(\bu, s; \varphi,\bpsi) = F(\varphi,\bpsi)\qquad \forall \varphi\in H^1(\Omega), \forall \bpsi\in H_0(curl;\Omega),
\end{equation}
where
$$
F(\varphi,\bpsi)= (\bm{g},\bpsi)-(f,\varphi) + \langle \phi_1, \varphi\rangle.
$$

Note that by testing the curl equation in the div-curl system against any $\rho\in H^1_{0c}(\Omega)$ we have
\begin{equation}
\label{EQ:divcurl-compatibility-n:001}
(\bm{g}, \nabla\rho) = 0,\qquad \forall \rho\in H_{0c}^1(\Omega),
\end{equation}
which gives rise to the following compatibility condition:
\begin{equation}
\label{EQ:divcurl-compatibility-n:001new}
\nabla\cdot\bm{g}=0, \quad \langle\bm{g}\cdot\bn_i, 1\rangle_{\Gamma_i} = 0, \
\text{for}\  i=1,\cdots, L.
\end{equation}

\begin{theorem}
Under the compatibility condition \eqref{EQ:divcurl-compatibility-n:001new} for $\bm{g}$, the solution $(\bu; s)$ of \eqref{EQ:weakform-4-nbvp-divcurl} satisfies the following equations:
\begin{eqnarray}
  \nabla\cdot(\varepsilon \bu) &=& f,\qquad \mbox{ in } \Omega, \label{EQ:11:06:201} \\
  \nabla\times\bu &=& \bm{g},\qquad \mbox{ in } \Omega,\label{EQ:11:06:202} \\
  s &=& 0,\qquad \mbox{ in } \Omega,\label{EQ:11:06:2021} \\
\varepsilon\bu\cdot \bn & = & \phi_1,\qquad \mbox{on } \Gamma. \label{EQ:11:06:204}
\end{eqnarray}
\end{theorem}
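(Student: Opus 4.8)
First I would recover the four identities \eqref{EQ:11:06:201}--\eqref{EQ:11:06:204} one at a time, by specializing the test pair $(\varphi,\bpsi)$ in the weak formulation \eqref{EQ:weakform-4-nbvp-divcurl}, in the order $s\equiv0$, then $\nabla\times\bu=\bm{g}$ and $\nabla\cdot(\varepsilon\bu)=f$, and finally the normal trace identity. To obtain $s\equiv0$, the starting observation is that $\nabla\rho\in H_0(curl;\Omega)$ for every $\rho\in H_{0c}^1(\Omega)$: indeed $\curl(\nabla\rho)=0\in[L^2(\Omega)]^3$, and since $\rho$ is constant on each connected component $\Gamma_i$ of $\Gamma$, the surface gradient of $\rho|_{\Gamma_i}$ vanishes, so $\nabla\rho\times\bn=0$ on $\Gamma$. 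Testing \eqref{EQ:weakform-4-nbvp-divcurl} with $\varphi=0$ and $\bpsi=\nabla\rho$, the first term of the bilinear form $B$ drops out because $\curl(\nabla\rho)=0$, leaving $(\nabla\rho,\varepsilon\nabla s)=(\bm{g},\nabla\rho)$. The compatibility hypothesis \eqref{EQ:divcurl-compatibility-n:001new} is equivalent to \eqref{EQ:divcurl-compatibility-n:001}, i.e. $(\bm{g},\nabla\rho)=0$ for all $\rho\in H_{0c}^1(\Omega)$, so $(\varepsilon\nabla s,\nabla\rho)=0$ for every such $\rho$. Choosing $\rho=s\in H_{0c}^1(\Omega)$ and using the uniform positive definiteness of $\varepsilon$ gives $\nabla s=0$; since $\Omega$ is connected, $s$ is a constant, and $s|_{\Gamma_0}=0$ forces $s\equiv0$, which is \eqref{EQ:11:06:2021}.

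With $s\equiv0$ the weak form reduces to $(\bu,\varepsilon\nabla\varphi+\curl\bpsi)=F(\varphi,\bpsi)$. Taking $\varphi=0$ and $\bpsi\in[C_0^\infty(\Omega)]^3\subset H_0(curl;\Omega)$ yields $(\bu,\curl\bpsi)=(\bm{g},\bpsi)$, which says precisely that $\curl\bu=\bm{g}$ in the sense of distributions; since $\bm{g}\in[L^2(\Omega)]^3$ it follows that $\bu\in H(curl;\Omega)$ and \eqref{EQ:11:06:202} holds. Likewise, taking $\bpsi=0$ and $\varphi\in C_0^\infty(\Omega)$ (so that $\langle\phi_1,\varphi\rangle=0$) and using the symmetry of $\varepsilon$ gives $(\varepsilon\bu,\nabla\varphi)=-(f,\varphi)$, i.e. $\nabla\cdot(\varepsilon\bu)=f$ in the sense of distributions; since $f\in L^2(\Omega)$, this yields $\varepsilon\bu\in H(div;\Omega)$ and \eqref{EQ:11:06:201}.

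For the normal boundary condition, observe that $\varepsilon\bu\in H(div;\Omega)$ is now known, so its normal trace $\varepsilon\bu\cdot\bn$ is a well-defined element of $H^{-\frac12}(\Gamma)$ and the Green's formula for $H(div;\Omega)$ is available. Testing the reduced weak form with $\bpsi=0$ and an arbitrary $\varphi\in H^1(\Omega)$, integrating by parts, and using $\nabla\cdot(\varepsilon\bu)=f$,
$$
-(f,\varphi)+\langle\varepsilon\bu\cdot\bn,\varphi\rangle_\Gamma=(\varepsilon\bu,\nabla\varphi)=(\bu,\varepsilon\nabla\varphi)=-(f,\varphi)+\langle\phi_1,\varphi\rangle,
$$
hence $\langle\varepsilon\bu\cdot\bn-\phi_1,\varphi\rangle_\Gamma=0$ for all $\varphi\in H^1(\Omega)$. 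Since the trace operator maps $H^1(\Omega)$ onto $H^{\frac12}(\Gamma)$, this forces $\varepsilon\bu\cdot\bn=\phi_1$ in $H^{-\frac12}(\Gamma)$, which is \eqref{EQ:11:06:204}.

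The only delicate point I foresee is the $s\equiv0$ step, where one must verify that $\nabla\rho\in H_0(curl;\Omega)$ for $\rho\in H_{0c}^1(\Omega)$ --- this is exactly where the constant-on-each-$\Gamma_i$ constraint in the definition of $H_{0c}^1(\Omega)$ is needed --- and where the compatibility condition \eqref{EQ:divcurl-compatibility-n:001new} must be invoked precisely in the equivalent form $(\bm{g},\nabla\rho)=0$ for all $\rho\in H_{0c}^1(\Omega)$. The remaining steps are routine consequences of distribution theory, the $H(div;\Omega)$ Green's formula, and surjectivity of the trace map.
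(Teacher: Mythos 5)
Your proof is correct and follows essentially the same route as the paper's: the only real difference is that you establish $s\equiv 0$ by testing directly with $\bpsi=\nabla\rho$ for $\rho\in H_{0c}^1(\Omega)$ (killing the curl term at the outset), whereas the paper first derives the pointwise identity $\nabla\times\bu+\varepsilon\nabla s=\bm{g}$ and then pairs it with $\nabla s$ --- both variants hinge on the same two facts, namely the vanishing tangential trace of gradients of $H_{0c}^1(\Omega)$ functions and the compatibility condition in the equivalent form $(\bm{g},\nabla\rho)=0$. Your explicit handling of the distributional identities and of the normal trace via the $H(\mathrm{div};\Omega)$ Green's formula simply spells out steps the paper leaves implicit.
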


\begin{proof}
By letting $\bm{\psi}=0$ in \eqref{EQ:weakform-4-nbvp-divcurl} we have
$$
(\bm{u}, \varepsilon \nabla \varphi) = -(f,\varphi) + \langle\phi_1,\varphi\rangle_\Gamma
$$
for all $\varphi\in H^1(\Omega)$. It follows that $\nabla\cdot (\varepsilon\bu) = f$ and
$\varepsilon\bu\cdot\bn = \phi_1 \mbox{ on } \Gamma$,
which leads to \eqref{EQ:11:06:201} and \eqref{EQ:11:06:204}. Next, by letting $\varphi=0$ in \eqref{EQ:weakform-4-nbvp-divcurl} we arrive at
\begin{eqnarray*}
(\bm{u}, \nabla \times \bm{\psi}) + (\bm{\psi}, \varepsilon\nabla s) = (\bm{g},\bm{\psi}), \qquad \forall \bm{\psi}\in H_0(curl;\Omega),
\end{eqnarray*}
which leads to
\begin{eqnarray*}
(\nabla\times\bm{u} + \varepsilon\nabla s, \bm{\psi}) = (\bm{g},\bm{\psi}),
\end{eqnarray*}
and thus
\begin{eqnarray}\label{EQ:11:06:222}
\nabla\times\bm{u} + \varepsilon\nabla s=\bm{g},\qquad \mbox{in} \ \Omega.
\end{eqnarray}
Now from \eqref{EQ:11:06:222} we have
$$
(\nabla\times\bm{u} + \varepsilon\nabla s, \nabla s)  = (\bm{g}, \nabla s),
$$
which, by the usual integration by parts, gives
$$
\langle\bn\times\bm{u}, \nabla s\rangle_\Gamma + (\varepsilon\nabla s, \nabla s)  = (\bm{g}, \nabla s),
$$
and by the boundary condition of $s=const$ on each $\Gamma_i$ and the compatibility condition \eqref{EQ:divcurl-compatibility-n:001new}
$$
(\varepsilon\nabla s, \nabla s) = \langle\bm{u}, \bn\times\nabla s\rangle_\Gamma + (\bm{g}, \nabla s)=0.
$$
It follows that $\nabla s=0$ so that $s\equiv 0$. This completes the proof of the theorem.
\end{proof}

The homogeneous dual problem of \eqref{EQ:weakform-4-nbvp-divcurl} seeks $\lambda \in H^1(\Omega)/\mathbb{R}$ and $\bq \in H_0(curl;\Omega)$ satisfying
\begin{equation}\label{EQ:homo-dual-problem}
B(\bv, r; \lambda, \bq)=0\qquad \forall \bv\in [L^2(\Omega)]^3,\ r\in H_{0c}^1(\Omega).
\end{equation}

\begin{theorem}
The solution to the homogeneous dual problem \eqref{EQ:homo-dual-problem} is unique.
\end{theorem}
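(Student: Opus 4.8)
The plan is to extract from the single orthogonality relation $B(\bv,r;\lambda,\bq)=0$ two independent consequences by specializing the test pair $(\bv,r)$. Expanding the bilinear form \eqref{EQ:div-curl:nbvp:bform} gives $B(\bv,r;\lambda,\bq)=(\bv,\varepsilon\nabla\lambda+\curl\bq)+(\bq,\varepsilon\nabla r)$. Taking $r=0$ and letting $\bv$ run over all of $[L^2(\Omega)]^3$ yields
\begin{equation}\label{EQ:dual-uniq-1}
\varepsilon\nabla\lambda+\curl\bq=0\qquad\text{a.e. in }\Omega,
\end{equation}
and taking $\bv=0$ yields
\begin{equation}\label{EQ:dual-uniq-2}
(\bq,\varepsilon\nabla r)=0\qquad\forall\,r\in H_{0c}^1(\Omega).
\end{equation}
In particular \eqref{EQ:dual-uniq-1} shows $\curl\bq=-\varepsilon\nabla\lambda\in[L^2(\Omega)]^3$, which is consistent with $\bq\in H_0(curl;\Omega)$.

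Next I would show that $\lambda$ is constant. Pairing \eqref{EQ:dual-uniq-1} in $L^2(\Omega)$ with $\nabla\lambda$ gives $(\varepsilon\nabla\lambda,\nabla\lambda)=-(\curl\bq,\nabla\lambda)$. Since $\bq\in H_0(curl;\Omega)$, approximating $\bq$ in $H(curl;\Omega)$ by fields $\bq_k\in[C_0^\infty(\Omega)]^3$ and integrating by parts gives $(\curl\bq_k,\nabla\mu)=-(\nabla\cdot\curl\bq_k,\mu)=0$ for every $\mu\in H^1(\Omega)$, so that $(\curl\bq,\nabla\mu)=0$ in the limit. With $\mu=\lambda$ this gives $(\varepsilon\nabla\lambda,\nabla\lambda)=0$, and the uniform positive-definiteness of $\varepsilon$ forces $\nabla\lambda=0$; that is, $\lambda=0$ in $H^1(\Omega)/\mathbb{R}$. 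Plugging this back into \eqref{EQ:dual-uniq-1} gives $\curl\bq=0$.

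It then remains to prove $\bq=0$, and this is the step I expect to be the main obstacle, because it is where the topology of $\Omega$ must be used. The idea is to represent $\bq$ as the gradient of an $H_{0c}^1(\Omega)$-function. Since $\bq\in H_0(curl;\Omega)$, its extension by zero $\widetilde{\bq}$ lies in $H(curl;\mathbb{R}^3)$ with $\curl\widetilde{\bq}=\widetilde{\curl\bq}=0$, so simple-connectedness of $\mathbb{R}^3$ gives $\widetilde{\bq}=\nabla\widetilde{p}$ for some $\widetilde{p}\in H^1_{\mathrm{loc}}(\mathbb{R}^3)$. Because $\widetilde{\bq}$ vanishes outside $\overline{\Omega}$, $\widetilde{p}$ is constant on each connected component of $\mathbb{R}^3\setminus\overline{\Omega}$; normalizing $\widetilde{p}$ to vanish on the unbounded component and using that the bounded components of $\mathbb{R}^3\setminus\overline{\Omega}$ are precisely the regions enclosed by $\Gamma_1,\cdots,\Gamma_L$, the restriction $p:=\widetilde{p}|_\Omega$ satisfies $p\in H^1(\Omega)$, $p|_{\Gamma_0}=0$, and $p|_{\Gamma_i}$ constant for $i=1,\cdots,L$; that is, $p\in H_{0c}^1(\Omega)$ with $\nabla p=\bq$. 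Choosing $r=p$ in \eqref{EQ:dual-uniq-2} then gives $0=(\bq,\varepsilon\nabla p)=(\varepsilon\nabla p,\nabla p)$, hence $\nabla p=0$ and $\bq=0$.

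Together with $\lambda=0$ in $H^1(\Omega)/\mathbb{R}$ this shows the homogeneous dual problem admits only the trivial solution, which proves uniqueness. An equivalent way to finish the last step is to note that \eqref{EQ:dual-uniq-2}, applied first to $r\in H_0^1(\Omega)$ and then to general $r$, forces $\nabla\cdot(\varepsilon\bq)=0$ and $\langle\varepsilon\bq\cdot\bn_i,1\rangle_{\Gamma_i}=0$ for $i=1,\cdots,L$; combined with $\curl\bq=0$ and $\bq\times\bn=0$ this places $\bq$ in $\mathbb{H}_{\varepsilon \tau,0}(\Omega)$ with all boundary fluxes vanishing, which is known to imply $\bq=0$. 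Either route reduces to the single nontrivial point, namely the identification of curl-free fields in $H_0(curl;\Omega)$ with gradients of $H_{0c}^1(\Omega)$-functions; everything else is routine.
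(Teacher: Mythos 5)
Your argument is correct and, for its first two thirds, is the same as the paper's: test with $r=0$ and arbitrary $\bv$ to get $\varepsilon\nabla\lambda+\nabla\times\bq=0$, pair with $\nabla\lambda$ and use $\bq\in H_0(curl;\Omega)$ to kill the curl term, conclude $\lambda=0$ and hence $\nabla\times\bq=0$; your density argument just makes explicit the integration by parts that the paper leaves implicit. The genuine divergence is in the final step $\bq=0$. The paper tests with $\bv=0$ and general $r\in H_{0c}^1(\Omega)$ to deduce $\nabla\cdot(\varepsilon\bq)=0$ and $\langle\varepsilon\bq\cdot\bn_i,1\rangle_{\Gamma_i}=0$, places $\bq$ in $\mathbb{H}_{\varepsilon \tau,0}(\Omega)$, and then simply asserts that a tangential $\varepsilon$-harmonic field with vanishing fluxes through all the $\Gamma_i$ is zero --- a standard fact about the flux characterization of that $L$-dimensional space, but one the paper does not prove or cite at that point. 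You instead construct a potential directly: extend $\bq$ by zero to $\mathbb{R}^3$ (legitimate precisely because $\bq\in H_0(curl;\Omega)$, so the extension stays in $H(curl;\mathbb{R}^3)$ with vanishing curl), use simple-connectedness of $\mathbb{R}^3$ to write the extension as $\nabla\widetilde p$, observe $\widetilde p$ is locally constant off $\overline\Omega$, and conclude $p=\widetilde p|_\Omega\in H_{0c}^1(\Omega)$ with $\nabla p=\bq$, so that the single test $r=p$ finishes the job. This neatly sidesteps the non-simple-connectedness of $\Omega$ (a curl-free field on $\Omega$ alone need not be a gradient) and makes the proof self-contained where the paper relies on an unreferenced harmonic-field lemma; the paper's route, on the other hand, is shorter and ties the uniqueness directly to the structure of $\mathbb{H}_{\varepsilon \tau,0}(\Omega)$ that the rest of the paper is organized around. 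Your closing remark correctly identifies that the two routes meet at the same underlying fact, namely that curl-free fields in $H_0(curl;\Omega)$ are exactly gradients of $H_{0c}^1(\Omega)$-functions.
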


\begin{proof}
The problem \eqref{EQ:homo-dual-problem} can be rewritten as
\begin{equation}\label{EQ:div-curl-nbvp:pdwg:001}
(\bm{v}, \varepsilon \nabla \lambda + \nabla \times \bq) + (\bq, \varepsilon\nabla r) = 0
\end{equation}
for all $\bv\in [L^2(\Omega)]^3$ and $r\in H_{0c}^1(\Omega)$. Note that the test against $r\in H^1_{0c}(\Omega)$ and $\bm{v}=0$ ensures $\nabla\cdot(\varepsilon \bq)=0$ and $\langle \varepsilon\bq\cdot \bn, 1\rangle_{\Gamma_i}=0$ for all $i\in \{1,\cdots, L\}$. In addition, by letting $r=0$ and varying $\bm{v}\in [L^2(\Omega)]^3$ we arrive at
$$
\varepsilon \nabla \lambda + \nabla \times \bq = 0,
$$
which, by testing against $\nabla \lambda$, leads to
$$
(\varepsilon\nabla \lambda, \nabla \lambda)=0,
$$
so that $\lambda\equiv 0$ and hence
$$
\nabla\times\bm{q}=0.
$$
Thus, we have
$$
\bm{q}\in \mathbb{H}_{\varepsilon \tau,0}(\Omega),\quad \langle \varepsilon\bm{q}\cdot\bn_i,1\rangle_{\Gamma_i}=0\  \text{for}\ i=1,\cdots, L,
$$
which yields $\bm{q}\equiv 0$.
\end{proof}

\section{Discrete Weak Gradient and Weak Curl}\label{Section:3}
The extended weak formulation \eqref{EQ:weakform-4-nbvp-divcurl} is based on the gradient and curl differential operators. In this section, we shall review the notion of discrete weak gradient and weak curl which forms a corner stone of the weak Galerkin finite element method. To this end, let $T$ be a polyhedral domain with boundary $\partial T$ and unit outward normal direction $\bm{n}$ on $\partial T$. Define the space of weak functions in $T$ by
\begin{equation*}
W(T) =\{v = \{v_0, v_b\} : v_0 \in L_2(T), v_b \in L_2(\partial T)\},
\end{equation*}
where $v_0$ represents the value of $v$ in the interior of $T$, and $v_b$ represents some specific boundary information of $v$. Analogously, define the space of vector-valued weak functions on $T$ by
$$
V(T) =\{\bm{v} = \{\bm{v}_0, \bm{v}_b\} : \bm{v}_0 \in [L_2(T)]^3, \bm{v}_b \in [L_2(\partial T)]^3\}.
$$

Let $P_j(T)$ the space of polynomials on $T$ with total degree $j$ and less. For any $v\in W(T)$, the discrete weak gradient, denoted by $\nabla_{w,j,T} v$, is defined as the unique vector-valued polynomial in $[P_j(T)]^3$ satisfying
\begin{equation}\label{EQ:dis_WeakGradient}
(\nabla_{w,j,T} v, \bm{\varphi})_T = -(v_0,\nabla \cdot \bm{\varphi})_T + \langle v_b , \bm{\varphi}\cdot \bn \rangle_{\partial T},\quad \forall \; \bm{\varphi} \in [P_j(T)]^{3}.
\end{equation}
Analogously, the discrete weak curl of $\bv \in V (T)$, denoted by $\nabla_{w,j,T}\times \bv$, is defined as the unique vector-valued polynomial in $[P_j(T)]^3$, satisfying
\begin{equation}\label{EQ:dis_WeakCurl}
(\nabla_{w,j,T} \times \bv, \bvarphi)_T = (\bv_0,\nabla \times \bvarphi)_T - \langle\bv_b \times \bn, \bvarphi\rangle_{\partial T}, \quad \forall \; \bm{\varphi} \in [P_j(T)]^3.
\end{equation}

\section{PDWG Numerical Algorithm}\label{Section:4}
Let ${\mathcal T}_h$ be a finite element partition of the domain $\Omega$ consisting of
polyhedra that are shape-regular \cite{wy3655}. Denote by ${\mathcal E}_h$ the
set of faces in ${\mathcal T}_h$ and ${\mathcal E}_h^0={\mathcal E}_h \setminus
\partial\Omega$ the set of interior faces. Denote by $h_T$ the diameter of the element $T\in \T_h$ and $h=\max_{T\in {\mathcal T}_h}h_T$ the meshsize of the partition $\T_h$.

For a given integer $k\ge 0$, we introduce the following finite element spaces subordinated to $\T_h$:
\begin{equation*}
\begin{split}
\bV_h=&\{\bm{v}:\ \bm{v}|_T\in [P_k(T)]^3, \forall T\in\T_h \},\\
S_h=&\{ \{s_0,s_b\}: \ s_0|_T\in P_k(T), s_b|_\pT\in P_k(\pT), \forall T\in\T_h, s_b|_{\Gamma_0}=0, s_b|_{\Gamma_i}=c_i\},\\
M_h=&\{ \{\varphi_0,\varphi_b\}: \ \varphi_0|_T\in P_k(T), \varphi_b|_\pT\in P_k(\pT), \forall T\in\T_h,\ \int_\Omega \varphi_0 = 0\},\\
\bW_h=&\{\bm{\psi}=\{\bm{\psi}_0,\bm{\psi}_{b} \}: \ \bm{\psi}_0|_T\in [P_k(T)]^3, \bm{\psi}_{b}|_\pT\in G_k(\pT), \forall T\in\T_h, \bm{\psi}_{b}|_{\Gamma}=0\},
\end{split}
\end{equation*}
where $G_k(\pT):=[P_k(\sigma)]^3\times\bn_\sigma$ is the space of polynomials of degree $k$ in the tangent space of $\pT$.

For simplicity of notation, for $\sigma\in S_h$ or $\sigma\in M_h$, denote by $\nabla_{w}\sigma$ the discrete weak gradient $\nabla _{w, k, T}\sigma$ computed  by using (\ref{EQ:dis_WeakGradient}) on each element $T$; i.e.,
$$
(\nabla _{w} \sigma)|_T=\nabla _{w,k,T}(\sigma|_T), \qquad
\sigma\in S_h \ \text{or}\ \sigma\in M_h.
$$
Analogously, for $\bq\in \bW_h$, denote by $\nabla_{w} \times\bq$ the discrete weak
curl $\nabla_{w, k, T} \times \bq$ computed by using (\ref{EQ:dis_WeakCurl}) on each element $T$; i.e.,
$$
(\nabla _{w} \times \bq)|_T=\nabla _{w, k, T} \times (\bq|_T), \qquad \bq\in \bW_h.
$$

An approximation of the bilinear form $B(\cdot; \cdot)$ is given as follows:
\begin{equation}\label{EQ:Bh}
B_h(\bu_h, s_h; \varphi,\bpsi): = (\bu_h, \varepsilon \nabla_w \varphi+ \nabla_w \times \bpsi) + (\bpsi_0, \varepsilon\nabla_w s_h)
\end{equation}
for $\bu_h\in \bV_h, \ s_h\in S_h,\ \varphi\in M_h,\ \bpsi\in\bW_h$.

The following is the PDWG finite element method for the div-curl model system (\ref{EQ:div-curl:div-eq})-(\ref{EQ:div-curl:normalBC}).

\begin{algorithm}[PDWG Algorithm] For an approximate solution of \eqref{EQ:div-curl:div-eq}-\eqref{EQ:div-curl:normalBC}, one may compute
$\bu_h\in \bV_h$, together with three auxiliary variables $s_h \in S_h$, $\lambda_h\in M_h$, and $\bq_h \in \bW_h$ satisfying
\begin{equation}\label{EQ:PDWG-3d:01}
\left\{
\begin{array}{rl}
s_1(\lambda_h, \bq_h;\varphi,\bpsi) + B_h(\bu_h, s_h; \varphi,\bpsi)&= F(\varphi,\bpsi),\quad \forall  \varphi\in M_h,\ \bpsi \in \bW_h, \\
-s_2(s_h, r)+B_h(\bm{v}, r; \lambda_h, \bq_h) & = 0, \quad \forall \bm{v}\in \bV_h,\ r\in S_h.
\end{array}
\right.
\end{equation}
Here the stabilizer $s_1$ is given by
\begin{eqnarray}
s_1(\lambda_h, \bq_h;\varphi, \bm{\psi})&=&\rho_1 \sum_{T\in \T_h}h_T^{-1} \langle \lambda_0 -\lambda_b, \varphi_0 - \varphi_b\rangle_{\partial T} \\
                                 & & + \rho_2 \sum_{T\in \T_h}h_T^{-1} \langle \bm{q}_0 \times \bm{n} -\bm{q}_{b} \times \bm{n}, \bm{\psi}_0 \times \bm{n} -\bm{\psi}_{b} \times \bm{n}\rangle_{\partial T}, \nonumber
\end{eqnarray}
and $s_2$ is defined accordingly in the space $M_h$ as follows
\begin{eqnarray}
s_2(s_h; r)=\rho_3 \sum_{T\in \T_h}h_T^{-\gamma} \langle s_0 -s_b, r_0 - r_b\rangle_{\partial T},
                                  \nonumber
\end{eqnarray}
where $\gamma\ge -1$ and $\rho_i >0$ are parameters with values at user's discretion.

\end{algorithm}

The PDWG scheme \eqref{EQ:PDWG-3d:01} further provides an approximation of the space of normal $\varepsilon$-harmonic vector fields $\mathbb{H}_{\varepsilon n,0}(\Omega)$, as revealed by Theorem \ref{THM:ErrorEstimate4uh} in that the difference $\boldeta_h=\mathcalQ_h \bu - \bu_h$ is sufficiently close to a true normal $\varepsilon$-harmonic vector field $\boldeta$. For this purpose, we introduce the following notation of discrete normal $\varepsilon$-harmonic functions.

\begin{definition}[discrete normal $\varepsilon$-harmonic functions] A vector field $\boldeta_h\in \bV_h$ is said to be a discrete normal $\varepsilon$-harmonic function if there exists a vector field $\bu\in H(div_\varepsilon; \Omega)\cap H(curl; \Omega)$ such that
\begin{equation}\label{EQ:discrete-harmonic-function}
\boldeta_h=\mathcalQ_h\bu - \bu_h,
\end{equation}
where $\mathcalQ_h$ is the $L^2$ projection operator onto the finite element space $\bV_h$ and $\bu_h$ is the solution of \eqref{EQ:PDWG-3d:01} for a div-curl system \eqref{EQ:div-curl:div-eq}-\eqref{EQ:div-curl:normalBC} with load functions $f$, $\bm{g}$, and $\phi_1$ determined by $\bu$.
\end{definition} 

In practical computation, a discrete normal $\varepsilon$-harmonic function can be readily obtained from \eqref{EQ:discrete-harmonic-function} by choosing a smooth vector field $\bu$ and one solving of the PDWG system \eqref{EQ:PDWG-3d:01}.

\section{Solution Existence and Uniqueness}\label{Section:5}
Introduce two semi-norms as follows:
\begin{equation}\label{norm}
\3bar (\lambda_h, \bq_h) \3bar =\Big(s_1(\lambda_h, \bq_h;\lambda_h, \bq_h)\Big)^{\frac{1}{2}}, \qquad \lambda_h\in M_h,  \ \bq_h \in \bW_h,
\end{equation}
\begin{equation}\label{norm2}
\3bar s_h \3bar =\Big(s_2(s_h; s_h)\Big)^{\frac{1}{2}}, \qquad s_h \in S_h.
\end{equation}
For simplicity, assume that $\varepsilon$ is piecewise constant with respect to the partition $\T_h$. Note that all the results can be generalized to piecewise smooth $\varepsilon$ without any difficulty.

Denote by $Q_0$ the $L^2$ projection operator onto $P_k(T)$ and $Q_b$ the $L^2$ projection operator onto $P_k(\sigma)$ on each face $\sigma \in \partial T$. Denote by $Q_h$ the projection operator onto the weak finite element space $S_h$ or $M_h$ such that
$$
(Q_h w)|_T = \{Q_0 w|_T,Q_b w|_{\pT}\}.
$$
Analogously, we use $\bbQ_0$, $\bbQ_b$ and  $\bbQ_h$ to denote the $L^2$ projection operators onto $[P_k(T)]^3$, $G_k(\sigma)=[P_k(\sigma)]^3\times\bn_\sigma$, and $\bW_h$, respectively. The $L^2$ projection operator onto the finite element space $\bV_h$ is denoted as $\mathcalQ_h$.

\begin{lemma}\label{Lemma5.1} \cite{wy3655} The $L^2$ projections $Q_h$ and ${\mathcal Q}_h$ satisfy the commutative property
 \begin{equation}\label{l}
 \nabla_{w}(Q_h w) = {\mathcalQ}_h(\nabla w), \qquad \forall w\in H^1(T),
 \end{equation}
 \begin{equation}\label{l-2}
 \nabla_{w}\times(\bbQ_h \bpsi) = {\mathcal Q}_h(\nabla \times \bpsi),  \qquad \forall  \bpsi\in H(curl; T).
 \end{equation}
 \end{lemma}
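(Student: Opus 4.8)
The plan is to verify the two commutative identities directly from the definitions of the discrete weak gradient \eqref{EQ:dis_WeakGradient} and discrete weak curl \eqref{EQ:dis_WeakCurl}, working element by element on a fixed $T\in\T_h$ and exploiting the defining property of the $L^2$ projections. Since both sides of \eqref{l} live in $[P_k(T)]^3$, it suffices to test against an arbitrary $\bvarphi\in[P_k(T)]^3$ and show the two inner products against $\bvarphi$ agree.

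First I would expand the left-hand side of \eqref{l}. By definition, $(\nabla_{w}(Q_h w),\bvarphi)_T = -(Q_0 w, \nabla\cdot\bvarphi)_T + \langle Q_b w, \bvarphi\cdot\bn\rangle_{\partial T}$. Because $\nabla\cdot\bvarphi\in P_{k-1}(T)\subset P_k(T)$, the definition of $Q_0$ gives $(Q_0 w,\nabla\cdot\bvarphi)_T = (w,\nabla\cdot\bvarphi)_T$; similarly $\bvarphi\cdot\bn|_\sigma\in P_k(\sigma)$ on each face, so $\langle Q_b w,\bvarphi\cdot\bn\rangle_{\partial T} = \langle w,\bvarphi\cdot\bn\rangle_{\partial T}$ (here $w\in H^1(T)$ has a well-defined trace). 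Hence $(\nabla_{w}(Q_h w),\bvarphi)_T = -(w,\nabla\cdot\bvarphi)_T + \langle w,\bvarphi\cdot\bn\rangle_{\partial T}$, which by ordinary integration by parts equals $(\nabla w,\bvarphi)_T$. On the other hand, $({\mathcalQ}_h(\nabla w),\bvarphi)_T = (\nabla w,\bvarphi)_T$ since $\bvarphi\in[P_k(T)]^3$ and $\mathcalQ_h$ is the $L^2$ projection onto $\bV_h$. The two sides coincide for all $\bvarphi$, proving \eqref{l}. The argument for \eqref{l-2} is entirely parallel: expand $(\nabla_{w}\times(\bbQ_h\bpsi),\bvarphi)_T = (\bbQ_0\bpsi,\nabla\times\bvarphi)_T - \langle\bbQ_b\bpsi\times\bn,\bvarphi\rangle_{\partial T}$, replace $\bbQ_0\bpsi$ by $\bpsi$ using $\nabla\times\bvarphi\in[P_{k-1}(T)]^3\subset[P_k(T)]^3$, and replace the boundary term using the fact that $\langle\bbQ_b\bpsi\times\bn,\bvarphi\rangle_{\partial T} = \langle\bpsi\times\bn,\bbQ_b\bvarphi\rangle_{\partial T}$ and $\bvarphi\times\bn$ lies in the tangential polynomial space $G_k(\sigma)$ onto which $\bbQ_b$ projects — so this equals $\langle\bpsi\times\bn,\bvarphi\rangle_{\partial T}$; then integrate by parts to recover $(\nabla\times\bpsi,\bvarphi)_T = ({\mathcal Q}_h(\nabla\times\bpsi),\bvarphi)_T$.

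The only genuinely delicate point — and the step I would be most careful about — is the treatment of the boundary term in \eqref{l-2}: one must check that testing $\bpsi\times\bn$ against $\bvarphi$ on $\partial T$ really does collapse correctly against the projection $\bbQ_b$ onto $G_k(\sigma)=[P_k(\sigma)]^3\times\bn_\sigma$, i.e. that $\bvarphi|_\sigma\times\bn_\sigma$ always belongs to this tangential space (it does, since cross product with $\bn_\sigma$ maps into the tangent plane and preserves polynomial degree) and that the scalar triple-product identity $\langle\bbQ_b\bpsi\times\bn,\bvarphi\rangle = \langle\bbQ_b\bpsi,\bn\times\bvarphi\rangle$... wait, more precisely one uses $(\bbQ_b\bpsi\times\bn)\cdot\bvarphi = \bbQ_b\bpsi\cdot(\bn\times\bvarphi)$ and then $\bn\times\bvarphi$ lies in the tangential space to absorb the projection. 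For $\bpsi\in H(curl;T)$ the trace $\bpsi\times\bn$ is meaningful in $H^{-1/2}(\partial T)$, which is enough for the duality pairing to make sense. Finally, summing the elementwise identities over all $T\in\T_h$ gives the global statement. This lemma is standard in the weak Galerkin literature and is cited from \cite{wy3655}, so I would present the verification concisely.
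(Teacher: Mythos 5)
Your verification is correct and is essentially the standard commuting-diagram argument from the cited reference \cite{wy3655}; the paper itself supplies no proof of this lemma, only the citation, so there is nothing to diverge from. The one point worth making explicit in a write-up is that for $\bpsi\in H(curl;T)$ the boundary projection $\bbQ_b\bpsi$ must be understood through the duality pairing of the tangential trace with the polynomial space $G_k(\sigma)$ (the trace is only in $H^{-1/2}$), which is precisely how your identity $\langle \bbQ_b\bpsi\times\bn,\bvarphi\rangle_{\partial T}=\langle \bpsi\times\bn,\bvarphi\rangle_{\partial T}$ is used.
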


\begin{theorem}
The kernel of the matrix of the PDWG finite element scheme \eqref{EQ:PDWG-3d:01} is given by
$$
K_h = \{ (\bv_h, s_h = 0, \lambda_h = 0, \bq_h = 0):\ \bv_h\in \bV_h\cap \mathbb{H}_{\varepsilon n,0}(\Omega)\}.
$$
In other words, the kernel of the matrix of the PDWG scheme \eqref{EQ:PDWG-3d:01} is isomorphic to the subspace of $\mathbb{H}_{\varepsilon n,0}(\Omega)$ consisting of harmonic functions that are piecewise polynomial of degree $k$.
\end{theorem}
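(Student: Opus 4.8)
The plan is to identify the kernel $K_h$ with the solution set of the homogeneous version of \eqref{EQ:PDWG-3d:01} (with $F\equiv 0$) and then to show that this set is precisely $\{(\bu_h,0,0,0):\bu_h\in\bV_h\cap\mathbb{H}_{\varepsilon n,0}(\Omega)\}$. First I would run the standard energy argument: taking $(\varphi,\bpsi)=(\lambda_h,\bq_h)$ in the first homogeneous equation and $(\bv,r)=(\bu_h,s_h)$ in the second and subtracting, the two copies of $B_h(\bu_h,s_h;\lambda_h,\bq_h)$ cancel and we are left with $s_1(\lambda_h,\bq_h;\lambda_h,\bq_h)+s_2(s_h;s_h)=0$. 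Since every $\rho_i>0$ and all face weights are positive, both stabilizers are nonnegative and hence vanish, i.e. $\3bar(\lambda_h,\bq_h)\3bar=0$ and $\3bar s_h\3bar=0$. This forces the weak continuities $\lambda_0=\lambda_b$, $\bq_0\times\bn=\bq_b\times\bn$, $s_0=s_b$ on every $\partial T$; together with the boundary constraints built into $M_h$, $\bW_h$, $S_h$, the interior pieces glue into global fields $\tilde\lambda\in H^1(\Omega)$ with $\int_\Omega\tilde\lambda=0$, $\tilde\bq\in H_0(curl;\Omega)$, and $\tilde s\in H_{0c}^1(\Omega)$, with moreover $\lambda_h=Q_h\tilde\lambda$, $\bq_h=\bbQ_h\tilde\bq$, $s_h=Q_h\tilde s$. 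Lemma \ref{Lemma5.1} then gives $\nabla_w\lambda_h=\mathcalQ_h(\nabla\tilde\lambda)$, $\nabla_w\times\bq_h=\mathcalQ_h(\curl\tilde\bq)$, $\nabla_w s_h=\mathcalQ_h(\nabla\tilde s)$.

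With the stabilizers gone, the first homogeneous equation reads $B_h(\bu_h,s_h;\varphi,\bpsi)=0$ for all $\varphi\in M_h$, $\bpsi\in\bW_h$. Testing with $\bpsi=0$ and inserting $\bvarphi=\varepsilon\bu_h|_T\in[P_k(T)]^3$ (legitimate since $\varepsilon$ is piecewise constant) into \eqref{EQ:dis_WeakGradient}, an elementwise integration by parts yields $(\bu_h,\varepsilon\nabla_w\varphi)=-\sum_T(\varphi_0,\nabla\cdot(\varepsilon\bu_h))_T+\sum_T\langle\varphi_b,\varepsilon\bu_h\cdot\bn\rangle_{\partial T}$. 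Choosing $\varphi_b=0$ with $\varphi_0$ of zero mean forces $\nabla\cdot(\varepsilon\bu_h)$ to be a global constant; then for general $\varphi\in M_h$ the volume term drops (since $\int_\Omega\varphi_0=0$), so letting $\varphi_b$ vary gives $[\![\varepsilon\bu_h\cdot\bn]\!]=0$ on interior faces and $\varepsilon\bu_h\cdot\bn=0$ on $\Gamma$, whence $\varepsilon\bu_h\in H(div;\Omega)$ and, by Gauss' theorem, the constant is $0$; thus $\nabla\cdot(\varepsilon\bu_h)=0$. Testing instead with $\varphi=0$, using \eqref{EQ:dis_WeakCurl} with $\bvarphi=\bu_h|_T$ together with $(\bpsi_0,\varepsilon\nabla_w s_h)=(\bpsi_0,\varepsilon\nabla\tilde s)$, we obtain $\sum_T(\bpsi_0,\curl\bu_h+\varepsilon\nabla\tilde s)_T-\sum_{\sigma\in\E_h^0}\langle\bpsi_b,\bn\times[\![\bu_h]\!]\rangle_\sigma=0$ (boundary faces drop because $\bpsi_b|_\Gamma=0$); hence $\curl\bu_h+\varepsilon\nabla\tilde s=0$ on each $T$ and $\bn\times[\![\bu_h]\!]=0$ on interior faces, so $\bu_h\in H(curl;\Omega)$ and $\curl\bu_h=-\varepsilon\nabla\tilde s$ in $L^2(\Omega)$. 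Taking the $L^2$-inner product with $\nabla\tilde s$ and integrating by parts exactly as in the primal argument leading to \eqref{EQ:11:06:2021}, the boundary term $\langle\bu_h\times\bn,\nabla\tilde s\rangle_\Gamma$ vanishes since $\tilde s$ is constant on each $\Gamma_i$, leaving $(\varepsilon\nabla\tilde s,\nabla\tilde s)=0$; uniform positive-definiteness of $\varepsilon$ and connectedness of $\Omega$ with $\tilde s|_{\Gamma_0}=0$ give $\tilde s\equiv 0$, hence $s_h=0$ and $\curl\bu_h=0$. So $\bu_h\in\bV_h\cap\mathbb{H}_{\varepsilon n,0}(\Omega)$.

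A parallel argument on the second homogeneous equation $B_h(\bv,r;\lambda_h,\bq_h)=0$ disposes of $\lambda_h$ and $\bq_h$. Testing with $\bv=0$ and $\bvarphi=\varepsilon\bq_{h,0}|_T$ in \eqref{EQ:dis_WeakGradient}, an elementwise integration by parts gives $\nabla\cdot(\varepsilon\tilde\bq)=0$ on each $T$ (take $r_b=0$), then $[\![\varepsilon\tilde\bq\cdot\bn]\!]=0$ on interior faces and $\langle\varepsilon\tilde\bq\cdot\bn_i,1\rangle_{\Gamma_i}=0$ for $i=1,\dots,L$ (take $r_b$ supported on one interior face, resp. $r_b\equiv c_i$ on $\Gamma_i$), so $\varepsilon\tilde\bq\in H(div;\Omega)$ with $\nabla\cdot(\varepsilon\tilde\bq)=0$. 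Testing with $r=0$ and choosing $\bv=\varepsilon\nabla_w\lambda_h+\nabla_w\times\bq_h\in\bV_h$ gives $\varepsilon\nabla_w\lambda_h+\nabla_w\times\bq_h=0$, i.e. $\mathcalQ_h(\varepsilon\nabla\tilde\lambda+\curl\tilde\bq)=0$; pairing this with $\nabla\tilde\lambda$ and using that $\mathcalQ_h$ is self-adjoint, that $\varepsilon\nabla\tilde\lambda$ and $\curl\tilde\bq$ already lie in $\bV_h$, and that $(\curl\tilde\bq,\nabla\tilde\lambda)=0$ (integration by parts with $\tilde\bq\times\bn=0$ on $\Gamma$) yields $(\varepsilon\nabla\tilde\lambda,\nabla\tilde\lambda)=0$, so $\tilde\lambda\equiv 0$ (constant of zero mean) and $\lambda_h=0$; consequently $\mathcalQ_h(\curl\tilde\bq)=0$, and since $\curl\tilde\bq\in\bV_h$ this forces $\curl\tilde\bq=0$, which is global because $\bq_h$ is weakly continuous. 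Hence $\tilde\bq\in\mathbb{H}_{\varepsilon\tau,0}(\Omega)$ with $\langle\varepsilon\tilde\bq\cdot\bn_i,1\rangle_{\Gamma_i}=0$ for all $i$, and invoking the same topological fact used in the uniqueness proof for the homogeneous dual problem (injectivity of $\bq\mapsto(\langle\varepsilon\bq\cdot\bn_i,1\rangle_{\Gamma_i})_i$ on $\mathbb{H}_{\varepsilon\tau,0}(\Omega)$) we conclude $\tilde\bq=0$, so $\bq_h=0$. For the reverse inclusion, if $\bu_h\in\bV_h\cap\mathbb{H}_{\varepsilon n,0}(\Omega)$ and $s_h=\lambda_h=\bq_h=0$, then all stabilizers vanish and, reversing the elementwise integration-by-parts identities above, $B_h(\bu_h,0;\varphi,\bpsi)=0$ and $B_h(\bv,0;0,0)=0$, so $(\bu_h,0,0,0)\in K_h$; the projection $(\bu_h,0,0,0)\mapsto\bu_h$ is then the claimed linear isomorphism onto the space of normal $\varepsilon$-harmonic fields that are piecewise polynomials of degree $k$.

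I expect the main obstacle to be the bookkeeping that upgrades $\nabla\cdot(\varepsilon\bu_h)=\mathrm{const}$ to $\nabla\cdot(\varepsilon\bu_h)=0$ — using the zero-mean constraint on $M_h$, the recovered normal-trace continuity, and Gauss' theorem in the correct order — together with the coupled step that forces $\tilde s\equiv 0$ through an integration by parts exploiting that $\tilde s$ is constant on each $\Gamma_i$; the analogous coupling between $\lambda_h$ and $\bq_h$ through the projected identity $\mathcalQ_h(\varepsilon\nabla\tilde\lambda+\curl\tilde\bq)=0$, and the appeal to the topological nondegeneracy of the pairing on $\mathbb{H}_{\varepsilon\tau,0}(\Omega)$, are the remaining delicate points.
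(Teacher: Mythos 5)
Your argument is correct, and for the primal variables it takes a genuinely different route from the paper. The treatment of the dual pair $(\lambda_h,\bq_h)$ is essentially identical to the paper's: kill the stabilizers by the energy argument, recover global continuity so that weak gradients/curls become strong ones, deduce $\varepsilon\nabla\lambda_0+\nabla\times\bq_0=0$, test against $\nabla\lambda_0$ to get $\lambda_h=0$, and then place $\bq_0$ in $\mathbb{H}_{\varepsilon\tau,0}(\Omega)$ with vanishing fluxes $\langle\varepsilon\bq_0\cdot\bn_i,1\rangle_{\Gamma_i}=0$ to conclude $\bq_0=0$ (the paper, like you, invokes this topological nondegeneracy without proof). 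Where you diverge is in handling $\bu_h$ and $s_h$: the paper appeals to the Helmholtz decomposition \eqref{EQ:helmholtz-2}, tests the first homogeneous equation with the $L^2$ projections of the decomposition components, uses the commutativity of Lemma \ref{Lemma5.1}, and concludes $(\varepsilon(\bu_h-\tilde{\boldsymbol\eta}),\bu_h-\tilde{\boldsymbol\eta})=0$, so that $\bu_h$ equals the harmonic component; $s_h=0$ then falls out at the end. You instead unwind the definitions \eqref{EQ:dis_WeakGradient}--\eqref{EQ:dis_WeakCurl} by inserting $\bvarphi=\varepsilon\bu_h|_T$ and $\bvarphi=\bu_h|_T$ (valid because $\varepsilon$ is assumed piecewise constant and $\bu_h$ is a piecewise polynomial of degree $k$), which lets you read off directly that $\nabla\cdot(\varepsilon\bu_h)$ is a global constant annihilated by Gauss' theorem, that $\varepsilon\bu_h\cdot\bn$ is continuous across interior faces and vanishes on $\Gamma$, that $\bu_h$ is tangentially continuous with $\nabla\times\bu_h=-\varepsilon\nabla\tilde s$, and finally, via the same boundary-term cancellation the paper uses in its continuous-level theorem on the extended weak formulation, that $\tilde s\equiv 0$ and $\nabla\times\bu_h=0$. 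Your version verifies membership of $\bu_h$ in $\mathbb{H}_{\varepsilon n,0}(\Omega)$ by checking its defining properties one by one; it is more elementary in that it bypasses the Helmholtz decomposition and its regularity apparatus entirely, needs no case distinction on $\dim\mathbb{H}_{\varepsilon n,0}(\Omega)$, and makes the reverse inclusion (that every piecewise-polynomial normal $\varepsilon$-harmonic field does lie in the kernel) transparent by reversing the same elementwise identities. The price is the face-by-face bookkeeping you flag yourself, plus the reliance on $\varepsilon$ being piecewise constant so that $\varepsilon\bu_h|_T$ is an admissible test polynomial --- an assumption the paper also makes in this section, so nothing is lost.
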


\begin{proof} Let $(\bu_h, s_h, \lambda_h,\bm{q}_h)$ be a solution of \eqref{EQ:PDWG-3d:01} with homogeneous data. It follows that
\begin{eqnarray}
 &&s_1(\lambda_h, \bm{q}_h;\lambda_h, \bm{q}_h) = 0, \quad s_2(s_h,s_h)=0, \label{eq:11:08:100}\\
&& (\bm{u}_h, \varepsilon\nabla_w \varphi + \nabla_w \times\bm{\psi}) +(\bm{\psi}_0,\varepsilon\nabla_w s_h) = 0,\qquad\forall \varphi\in M_h, \bm{\psi}\in\bW_h,\label{eq:11:08:101}\\
&&(\bm{q}_0, \varepsilon\nabla_w r)+(\bm{v}, \varepsilon \nabla_w \lambda_h +\nabla_w \times \bm{q}_h) = 0,\qquad \forall \bm{v}\in\bV_h, r\in S_h.\label{eq:11:08:102}
\end{eqnarray}
From \eqref{eq:11:08:100}  we have
\begin{eqnarray} \label{eq:2020:01:16:001}
\lambda_0=\lambda_b,\;\bm{q}_0\times \bm{n}=\bm{q}_{b}\times \bm{n},\ s_0=s_b, \  \text{ on } \partial T,
\end{eqnarray}
so that $\lambda_0\in C(\Omega)$, $s_0\in C(\Omega)$ and $\bm{q}_0\in H_0(curl;\Omega)$. Hence,
\begin{eqnarray}\label{EQ:Nov-26:01}
\nabla \lambda_0 = \nabla_w \lambda_h ,\; \nabla \times \bm{q}_0 = \nabla_w \times \bm{q}_h.
\end{eqnarray}

Next, by letting $r=0$ and varying $\bm{v}$ in \eqref{eq:11:08:102} we have
$$
\varepsilon \nabla_w \lambda_h + \nabla_w \times \bm{q}_h = 0,
$$
which, together with \eqref{EQ:Nov-26:01}, implies
\begin{equation}\label{EQ:11:10:100}
\varepsilon \nabla \lambda_0 + \nabla\times \bm{q}_0 = 0.
\end{equation}
From $\bm{q}_0\in H_{0}(curl;\Omega)$, we have
\begin{eqnarray*}
(\varepsilon \nabla \lambda_0 + \nabla\times \bm{q}_0, \nabla \lambda_0 )
&=&(\varepsilon \nabla \lambda_0, \nabla \lambda_0) + (\nabla\times \bm{q}_0, \nabla \lambda_0)\\
&=&(\varepsilon \nabla \lambda_0, \nabla \lambda_0) + \langle \bm{n}\times \bm{q}_0, \lambda_0\rangle\\
& = & (\varepsilon \nabla \lambda_0, \nabla \lambda_0).\end{eqnarray*}
Thus,
\begin{eqnarray*}
(\varepsilon \nabla \lambda_0, \nabla \lambda_0 )=0,
\end{eqnarray*}
which gives $\nabla \lambda_0 = \bm{0}$, and hence $\lambda_0 \equiv 0$ as a function with mean value 0.  This further leads to $\lambda_b\equiv 0$. Thus, from \eqref{EQ:11:10:100} we have
$$
\nabla \times \bm{q}_0=0, \qquad \text{in}\ \Omega.
$$
Observe that $\bm{q}_0$ satisfies
$$
(\bm{q}_0, \varepsilon\nabla_w r)=0\qquad \forall r\in S_h,
$$
which leads to $\bm{q}_0 \in H(div_\varepsilon; \Omega)$ and
$$
\nabla\cdot(\varepsilon\bm{q_0})=0,\quad \langle \bm{q}_0\cdot\bn_i,1\rangle_{\Gamma_i}=0,
$$
for $i=1,2,\cdots L$. This, together with $\nabla\times\bm{q}_0=0$ and $\bm{q_0}\in H_0(curl;\Omega)$ shows that $\bm{q}_0 \equiv 0$, and hence $\bm{q}_{b}=\bn\times(\bm{q}_{b}\times\bn)=\bn\times 0=0$.

Next, from the Helmholtz decomposition \eqref{EQ:helmholtz-2}, we have
$$
\bm{u}_h=\varepsilon^{-1} \nabla\times\bm{\tilde\psi}+\nabla\tilde\phi+\bm{\tilde\eta},
$$
with some $\bm{\tilde\eta}\in \mathbb{H}_{\varepsilon n,0}(\Omega)$ and $\bm{\tilde\psi}\in H_0(curl;\Omega)$ satisfying $\nabla\cdot(\varepsilon\bm{\tilde\psi}) =0$ and $\langle\varepsilon\bm{\tilde\psi}\cdot\bn_i,1\rangle_{\Gamma_i}=0$ for $i=1,\cdots, L$.
From $s_2(s_h,s_h)=0$ we have $s_0=s_b$ on $\pT$ for each element $T\in \T_h$ so that $s_0\in H^1(\Omega)$. It follows that $\nabla_w s_h = \nabla s_0$. If $\mathbb{H}_{\varepsilon n,0}(\Omega)$ has dimension 0, then $\bm{\tilde\eta}=0$. In \eqref{eq:11:08:101}, by choosing the test function $\phi$ and $\bm{\psi}$ to be the $L^2$ projection of the corresponding function in the Helmholtz decomposition we arrive at
\begin{equation}
\begin{split}
0=&(\bm{u}_h, \varepsilon\nabla_w Q_h\tilde\varphi + \nabla_w \times \bbQ_h\bm{\tilde\psi})+(\bbQ_0 \bm{\tilde\psi},\varepsilon\nabla_w s_h)\\
=&(\bm{u}_h, {\mathcal Q}_h\varepsilon\nabla\tilde\varphi + {\mathcal Q}_h\nabla \times \bm{\tilde\psi}) +(\bm{\tilde\psi},\varepsilon\nabla s_0)\\
=&(\bm{u}_h, \varepsilon\nabla\tilde\varphi + \nabla \times \bm{\tilde\psi}) +(\bm{\tilde\psi},\varepsilon\nabla s_0)\\
=&(\varepsilon\bm{u}_h, \bm{u}_h - \bm{\tilde\eta}) +(\bm{\tilde\psi},\varepsilon\nabla s_0)\\
=&(\varepsilon(\bm{u}_h-\bm{\tilde\eta}), \bm{u}_h- \bm{\tilde\eta}),
\end{split}
\end{equation}
which leads to $\bm{u}_h-\bm{\tilde\eta}=0$; i.e., $\bm{u}_h$ is a harmonic function. As a harmonic function and piecewise polynomial of degree $k$, the first term on the left-hand side of \eqref{eq:11:08:101} becomes to be zero for all test functions $\varphi\in M_h$ and $\bm{\psi}\in\bW_h$. It follows that $\nabla_w s_h=0$ so that $\nabla s_0= \nabla_w s_h =0$ and hence $s_0\equiv 0$, so is $s_b \equiv 0$.
\end{proof}

The following is our main result concerning the solution existence and uniqueness of the numerical scheme \eqref{EQ:PDWG-3d:01}.

\begin{theorem}
The PDWG finite element scheme \eqref{EQ:PDWG-3d:01} has one and only one solution for all the components except $\bm{u}_h$. The solution $\bm{u}_h$ is unique up to a discrete harmonic function $\bm{\eta}_h\in \mathbb{H}_{\varepsilon n,0}(\Omega)$ that is a piecewise polynomial of degree $k$.
\end{theorem}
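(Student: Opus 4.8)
The plan is to treat \eqref{EQ:PDWG-3d:01} as a square linear system and to read off everything from the finite-dimensional Fredholm alternative together with the kernel characterization established in the preceding theorem. First I would observe that the unknown $(\bu_h,s_h,\lambda_h,\bq_h)$ ranges over $\bV_h\times S_h\times M_h\times\bW_h$, while the test functions $(\varphi,\bpsi,\bv,r)$ range over $M_h\times\bW_h\times\bV_h\times S_h$; these two spaces have the same dimension, so the coefficient matrix $\mathbb A$ of \eqref{EQ:PDWG-3d:01}, expressed in fixed ordered bases, is square. Grouping $X=(\lambda_h,\bq_h)$ and $Y=(\bu_h,s_h)$, the scheme takes the block form
$$
\mathbb A\begin{pmatrix}X\\ Y\end{pmatrix}
=\begin{pmatrix}\mathbb S_1 & \mathbb B\\ \mathbb B^{\mathsf T} & -\mathbb S_2\end{pmatrix}\begin{pmatrix}X\\ Y\end{pmatrix}
=\begin{pmatrix}\mathbb F\\ 0\end{pmatrix},
$$
where $\mathbb S_1,\mathbb S_2$ are the symmetric positive semidefinite Gram matrices of the stabilizers $s_1$ and $s_2$, $\mathbb B$ is the matrix of the form $B_h(\bu_h,s_h;\varphi,\bpsi)$, and $\mathbb F$ represents the functional $F(\varphi,\bpsi)$ on $M_h\times\bW_h$. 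Since $B_h$ enters the second block of \eqref{EQ:PDWG-3d:01} exactly transposed — $B_h(\bv,r;\lambda_h,\bq_h)$ against $B_h(\bu_h,s_h;\varphi,\bpsi)$, read with $\bu_h\leftrightarrow\bv$, $s_h\leftrightarrow r$, $\varphi\leftrightarrow\lambda_h$, $\bpsi\leftrightarrow\bq_h$ — and $s_1,s_2$ are symmetric, the matrix $\mathbb A$ is symmetric, so $\ker\mathbb A^{\mathsf T}=\ker\mathbb A$.

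Next, the preceding theorem gives $\ker\mathbb A=K_h=\{(\bv_h,0,0,0):\bv_h\in\bV_h\cap\mathbb H_{\varepsilon n,0}(\Omega)\}$, so every kernel element has vanishing $(\lambda_h,\bq_h)$ component. By the Fredholm alternative in finite dimensions, the system \eqref{EQ:PDWG-3d:01} is solvable precisely when its right-hand side $(\mathbb F,0)$ annihilates $\ker\mathbb A^{\mathsf T}=K_h$. But pairing $(\mathbb F,0)$ with a kernel element $(\bv_h,0,0,0)$ produces $F$ evaluated at the $(\varphi,\bpsi)=(0,0)$ slot plus the zero functional evaluated at $(\bv_h,0)$, which equals $F(0,0)=0$. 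Hence a solution exists for every choice of the data $f,\bm g,\phi_1$, and the full solution set is a coset of $K_h$.

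Finally, because every element of $K_h$ has its $S_h$-, $M_h$-, and $\bW_h$-components equal to zero, the components $s_h$, $\lambda_h$, $\bq_h$ coincide across all solutions and are therefore unique, while $\bu_h$ is determined only modulo an addend in $\bV_h\cap\mathbb H_{\varepsilon n,0}(\Omega)$, i.e.\ up to a piecewise polynomial of degree $k$ that is a normal $\varepsilon$-harmonic field; the concluding isomorphism statement is just a restatement of the kernel description. I expect the main obstacle to be the bookkeeping behind the symmetry of $\mathbb A$: the trial and test spaces are genuinely different function spaces — $S_h$ carries vanishing/constant boundary constraints whereas $M_h$ instead carries a zero-mean constraint, and $\bW_h$ carries a boundary constraint absent from $\bV_h$ — so one must use the \emph{same} basis of $\bV_h\times S_h$ in the $\bu_h$ and $\bv$ slots and the same basis of $M_h\times\bW_h$ in the $\varphi$ and $\lambda_h$ slots before asserting that the $(2,1)$ block is the transpose of the $(1,2)$ block. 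With that verification in place, and the kernel theorem already available, no further work is needed.
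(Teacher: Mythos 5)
Your proof is correct and follows exactly the route the paper intends: the paper states this theorem without proof, treating it as an immediate consequence of the preceding kernel characterization, and your argument (square symmetric system, $\ker\mathbb{A}^{\mathsf T}=\ker\mathbb{A}=K_h$, right-hand side supported only in the $(\varphi,\bpsi)$-tested equations and hence orthogonal to $K_h$, solution set a coset of $K_h$) is precisely the standard linear-algebra completion being invoked. Your cautionary remark about using identical bases for the trial and test copies of $\bV_h\times S_h$ and of $M_h\times\bW_h$ before claiming the off-diagonal blocks are transposes is apt and correctly resolved.
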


For the PDWG element of lowest order (i.e., $k=0$), a discrete harmonic function $\bm{\eta}_h\in \mathbb{H}_{\varepsilon n,0}(\Omega)$ would be a piecewise constant vector field that is continuous across each interior element interface and has vanishing value on the domain boundary along the normal direction. It follows that one must have $\bm{\eta}_h\equiv 0$, or equivalently, the PDWG finite element scheme \eqref{EQ:PDWG-3d:01} has one and only one solution for all the components. 

\section{Error Analysis}\label{Section:6}
For the exact solution $\{\bu;s=0\}$ of the div-curl system, we have from (\ref{EQ:dis_WeakGradient}) and (\ref{EQ:dis_WeakCurl}) that
\begin{equation}\label{EQ:div-curl:EE:November-08:100}
\begin{split}
B_h(\mathcalQ_h\bu, Q_h s; \varphi,\bpsi) =& (\mathcalQ_h\bu, \varepsilon \nabla_w \varphi + \nabla_w \times \bpsi) + (\bpsi_0, \varepsilon\nabla_w Q_h s)\\
=& (\mathcalQ_h\bu, \varepsilon \nabla \varphi_0 + \nabla \times \bpsi_0) \\
& + \langle \mathcalQ_h\bu, \varepsilon\bn(\varphi_b-\varphi_0) + (\bpsi_0-\bpsi_b)\times\bn\rangle_{\partial\T_h}\\
=& (\bu, \varepsilon \nabla \varphi_0 + \nabla \times \bpsi_0) \\
& + \langle \mathcalQ_h\bu, \varepsilon\bn(\varphi_b-\varphi_0) + (\bpsi_0-\bpsi_b)\times\bn\rangle_{\partial\T_h}\\
=& -(\nabla\cdot(\varepsilon\bu),\varphi_0) + (\nabla\times\bu, \bpsi_0) \\
& + \langle \bu, \varepsilon\bn(\varphi_0-\varphi_b) + (\bpsi_b-\bpsi_0)\times\bn\rangle_{\partial\T_h}\\
& + \langle \mathcalQ_h\bu, \varepsilon\bn(\varphi_b-\varphi_0) + (\bpsi_0-\bpsi_b)\times\bn\rangle_{\partial\T_h}\\
& + \langle {\phi}_1, \varphi_b\rangle_{\partial\Omega}\\
=& \langle {\phi}_1, {\varphi}_b\rangle_{\partial\Omega} - (f,\varphi_0) +(\bm{g},\bm{\psi}_0)\\
&+ \langle \bu-\mathcalQ_h\bu, \varepsilon\bn(\varphi_0-\varphi_b) + (\bpsi_b-\bpsi_0)\times\bn\rangle_{\partial\T_h}.
\end{split}
\end{equation}
Combining the above equation with the fact that $\lambda=0, \ \bm{q}=0$ we obtain
\begin{equation}\label{EQ:div-curl:EE:November-08:ee:01}
\begin{split}
&s_1(Q_h\lambda-\lambda_h, \bbQ_h\bm{q}-\bm{q}_h;\varphi, \bm{\psi})
+ B(\mathcalQ_h\bu-\bu_h, Q_h s - s_h; \varphi,\bpsi) \\
= & \langle \bu-\mathcalQ_h\bu, \varepsilon\bn(\varphi_0-\varphi_b) + (\bpsi_b-\bpsi_0)\times\bn\rangle_{\partial\T_h}.
\end{split}
\end{equation}

The second error equation can be easily obtained as follows:
\begin{equation}\label{EQ:div-curl:EE:November-08:ee:02}
-s_2(Q_hs-s_h, r)
+ B(\bm{v}, r; Q_h\lambda - \lambda_h, \bbQ_h\bm{q}- \bm{q}_h) = 0,
\end{equation}
where we have used the fact that $s=0, \ \bm{q}=0$, and $\lambda=0$.

Denote the error functions by
$$
e_{\bu}= \mathcalQ_h\bu - \bu_h, \ e_s=Q_h s - s_h,\ e_\lambda=Q_h\lambda - \lambda_h, \ e_{\bm{q}}=\bbQ_h\bm{q}-\bm{q}_h.
$$

\begin{theorem}\label{THM:ErrorEstimate4Triple} For the numerical solution $\bu_h, \ s_h, \ \lambda_h,\ \bm{q}_h$ arising from the PDWG scheme \eqref{EQ:PDWG-3d:01}, the following estimate holds true:
\begin{equation}\label{EQ:2020-02-09:100}
\begin{split}
\3bar (e_\lambda, e_{\bm{q}})\3bar + \3bar e_s \3bar \lesssim h^{k+\theta}\|\bu\|_{k+\theta},
\end{split}
\end{equation}
provided that $\bu\in [H^{k+\theta}(\Omega)]^3$ for $\theta\in (1/2,1]$.
\end{theorem}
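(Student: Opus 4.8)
The plan is to derive the estimate \eqref{EQ:2020-02-09:100} from the two error equations \eqref{EQ:div-curl:EE:November-08:ee:01} and \eqref{EQ:div-curl:EE:November-08:ee:02} by testing with the error functions themselves, so that the bilinear form $B_h$ cancels and only the stabilizers plus the consistency residual on $\partial\T_h$ remain. First I would substitute $(\varphi,\bpsi) = (e_\lambda, e_{\bm q})$ into \eqref{EQ:div-curl:EE:November-08:ee:01} and $(\bm v, r) = (e_{\bu}, e_s)$ into \eqref{EQ:div-curl:EE:November-08:ee:02}, then subtract the second identity from the first. By the definitions \eqref{EQ:Bh}, \eqref{EQ:div-curl:nbvp:bform} of $B_h$, the cross terms $B_h(e_{\bu}, e_s; e_\lambda, e_{\bm q})$ appearing in both equations cancel, leaving
\[
\3bar (e_\lambda, e_{\bm q})\3bar^2 + \3bar e_s\3bar^2
= \langle \bu - \mathcalQ_h\bu,\ \varepsilon\bn((e_\lambda)_0-(e_\lambda)_b) + ((e_{\bm q})_b-(e_{\bm q})_0)\times\bn\rangle_{\partial\T_h},
\]
using \eqref{norm} and \eqref{norm2} to identify the stabilizer terms $s_1(e_\lambda,e_{\bm q};e_\lambda,e_{\bm q})$ and $s_2(e_s,e_s)$ with the squared seminorms.

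Next I would estimate the right-hand side. Since $\mathcalQ_h$ is the $L^2$ projection onto $[P_k(T)]^3$ and $\bu\in[H^{k+\theta}(\Omega)]^3$ with $\theta\in(1/2,1]$, the standard approximation/trace inequality on shape-regular elements gives, element by element,
\[
\|\bu - \mathcalQ_h\bu\|_{\partial T} \lesssim h_T^{k+\theta-1/2}\|\bu\|_{k+\theta,T},
\]
which requires $\theta>1/2$ for the trace to be well-defined — this is where the regularity hypothesis is used. For the second factor I would insert the scaling weight $h_T^{-1/2}\cdot h_T^{1/2}$ so as to match the $h_T^{-1}$ in the definition of $s_1$: by Cauchy–Schwarz on each $\partial T$ and then summing over $T\in\T_h$,
\[
\big|\langle \bu-\mathcalQ_h\bu,\ \varepsilon\bn((e_\lambda)_0-(e_\lambda)_b)+((e_{\bm q})_b-(e_{\bm q})_0)\times\bn\rangle_{\partial\T_h}\big|
\lesssim \Big(\sum_{T}h_T\|\bu-\mathcalQ_h\bu\|_{\partial T}^2\Big)^{1/2}\,\3bar(e_\lambda,e_{\bm q})\3bar.
\]
Plugging in the approximation bound and using $h_T\le h$, the first factor is $\lesssim h^{k+\theta}\|\bu\|_{k+\theta}$. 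Combining with the left-hand side and dividing by $\3bar(e_\lambda,e_{\bm q})\3bar + \3bar e_s\3bar$ (or using the elementary inequality $a^2+b^2\le C(a+b)$ when $a+b$ is bounded by what we just derived) yields \eqref{EQ:2020-02-09:100}.

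The only genuinely delicate point is the bookkeeping of the boundary weights: one must verify that the power of $h_T$ produced by the trace inequality on $\bu-\mathcalQ_h\bu$ together with the $h_T^{-1}$ built into $s_1$ combines to exactly $h_T^{k+\theta}$ in the $\bm q$-component, and that the $\lambda$-component — which in the residual carries no derivative on $\bu$ — gives at least the same power; since $\theta\le 1$, $h_T^{k+\theta-1/2}\cdot h_T^{-1/2} = h_T^{k+\theta-1}$ against a full power $h_T^1$ from the weight split, so the balance is clean and uniform in $h$. I would also note that $s_2$ plays no role on the right-hand side — the term $-s_2(e_s,r)$ simply supplies the $\3bar e_s\3bar^2$ on the left after the substitution $r=e_s$ — so no separate estimate for $e_s$ is needed beyond this cancellation argument. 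The positive-definiteness and $L^\infty$ bounds on $\varepsilon$, together with shape-regularity of $\T_h$, keep all hidden constants independent of $h$.
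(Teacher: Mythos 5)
Your proposal is correct and follows essentially the same route as the paper: test the first error equation with $(e_\lambda,e_{\bm q})$ and the second with $(e_\bu,e_s)$, cancel the common $B_h$ term to obtain $\3bar(e_\lambda,e_{\bm q})\3bar^2+\3bar e_s\3bar^2$ equal to the boundary residual, then estimate that residual by Cauchy--Schwarz with the $h_T^{1/2}\cdot h_T^{-1/2}$ weight split and the trace/approximation bound $\|\bu-\mathcalQ_h\bu\|_{\partial T}\lesssim h_T^{k+\theta-1/2}\|\bu\|_{k+\theta,T}$. The paper states only the resulting identity and the final inequality, so your write-up supplies exactly the details it omits, with correct bookkeeping of the powers of $h$.
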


\begin{proof}
From \eqref{EQ:div-curl:EE:November-08:ee:01} and \eqref{EQ:div-curl:EE:November-08:ee:02} we have
\begin{equation*}
\begin{split}
s_1(e_\lambda, e_{\bm{q}}; e_\lambda, e_{\bm{q}}) + s_2(e_s, e_s)
= &\langle \bu-\mathcalQ_h\bu, \varepsilon\bn(e_{\lambda,0}-e_{\lambda,b}) + (e_{\bm{q},b}-e_{\bm{q},0})\times\bn\rangle_{\partial\T_h},
\end{split}
\end{equation*}
which leads to
\begin{equation}\label{EQ:error-estimate-n-part01}
\3bar (e_\lambda, e_{\bm{q}})\3bar^2 + \3bar e_s \3bar^2 \lesssim h^{k+\theta}\|\bu\|_{k+\theta}
\3bar ( e_\lambda, e_{\bm{q}}) \3bar,
\end{equation}
where $\theta\in (1/2,1]$ and $k$ is the order of polynomials for the finite element space $\bV_h$.
This completes the proof of the theorem.
\end{proof}

To derive an estimate for the error function $e_\bu$, we use the Helmholtz decomposition \eqref{EQ:helmholtz-2} to obtain a $\tilde\phi\in H^1(\Omega)$, $\bm{\tilde\psi}\in {H}_0(curl;\Omega)$, and $\tilde\boldeta\in \HnHarmonic$ such that
\begin{equation}\label{EQ:2020-02-09:101}
e_\bu = \varepsilon^{-1} \nabla\times\bm{\tilde\psi} + \nabla\tilde\phi + \tilde\boldeta.
\end{equation}
Assume the following $H^\alpha$-regularity holds true for some fixed $\alpha\in (1/2, 1]$:
\begin{equation}\label{EQ:regularity-assumption-helmholtz-01new}
\|\bm{\tilde\psi}\|_\alpha + \|\tilde\phi\|_\alpha \lesssim \|e_\bu - \tilde\boldeta\|_0.
\end{equation}
The following is the main convergence result of this paper.

\begin{theorem}\label{THM:ErrorEstimate4uh} Let $\bu$ be a solution of the div-curl system (\ref{EQ:div-curl:div-eq})-(\ref{EQ:div-curl:normalBC}). Assume that the Helmholtz decomposition \eqref{EQ:2020-02-09:101} has the $H^\alpha$-regularity estimate \eqref{EQ:regularity-assumption-helmholtz-01new}. For a numerical solution $\bu_h, \ s_h, \ \lambda_h,\ \bm{q}_h$ arising from \eqref{EQ:PDWG-3d:01}, there exists a harmonic function $\boldeta\in \HnHarmonic$ such that the following estimate holds true:
\begin{equation}\label{EQ:2020-02-09:102}
\begin{split}
\|\varepsilon^{1/2}(\bu_h+\boldeta-\mathcalQ_h\bu)\| \lesssim  h^{k+\theta+\alpha-1} \|\bu\|_{k+\theta},
\end{split}
\end{equation}
provided that $\bu\in [H^{k+\theta}(\Omega)]^3$ for $\theta\in (1/2,1]$.
\end{theorem}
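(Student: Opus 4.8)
The plan is to take $\boldeta=\tilde\boldeta$, the normal $\varepsilon$-harmonic component in the Helmholtz decomposition \eqref{EQ:2020-02-09:101} of $e_\bu:=\mathcalQ_h\bu-\bu_h$, so that $\bu_h+\boldeta-\mathcalQ_h\bu=-(e_\bu-\tilde\boldeta)=-(\varepsilon^{-1}\curl\bm{\tilde\psi}+\nabla\tilde\phi)$ and it suffices to bound $\|\varepsilon^{1/2}(e_\bu-\tilde\boldeta)\|$. The first step is the energy identity
$$
\|\varepsilon^{1/2}(e_\bu-\tilde\boldeta)\|^2=(\varepsilon e_\bu,\,e_\bu-\tilde\boldeta)=(e_\bu,\curl\bm{\tilde\psi})+(e_\bu,\varepsilon\nabla\tilde\phi),
$$
obtained by expanding $(\varepsilon(e_\bu-\tilde\boldeta),e_\bu-\tilde\boldeta)$ and observing that the cross term $(\varepsilon\tilde\boldeta,e_\bu-\tilde\boldeta)=(\tilde\boldeta,\curl\bm{\tilde\psi})+(\varepsilon\tilde\boldeta,\nabla\tilde\phi)$ vanishes: integrating by parts and using $\curl\tilde\boldeta=0$, $\nabla\cdot(\varepsilon\tilde\boldeta)=0$, $\varepsilon\tilde\boldeta\cdot\bn=0$ on $\Gamma$, together with $\bm{\tilde\psi}\times\bn=0$ on $\Gamma$, kills both the volume and boundary contributions.

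Next I would test the error equation \eqref{EQ:div-curl:EE:November-08:ee:01} with $\varphi=Q_h\tilde\phi$ (shifted by a constant so that it lies in $M_h$, which alters neither its weak gradient nor the difference $\varphi_0-\varphi_b$) and $\bpsi=\bbQ_h\bm{\tilde\psi}$ (which lies in $\bW_h$ because $\bm{\tilde\psi}\in H_0(curl;\Omega)$). By the commutativity in Lemma \ref{Lemma5.1}, $\nabla_w\varphi=\mathcalQ_h\nabla\tilde\phi$ and $\nabla_w\times\bpsi=\mathcalQ_h\curl\bm{\tilde\psi}$; since $e_\bu\in\bV_h$ and $\varepsilon$ is piecewise constant, these projections drop out in the pairing against $e_\bu$, so $(e_\bu,\varepsilon\nabla_w\varphi+\nabla_w\times\bpsi)=\|\varepsilon^{1/2}(e_\bu-\tilde\boldeta)\|^2$ by the identity above. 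This converts \eqref{EQ:div-curl:EE:November-08:ee:01} into
$$
\|\varepsilon^{1/2}(e_\bu-\tilde\boldeta)\|^2=\langle\bu-\mathcalQ_h\bu,\,\varepsilon\bn(\varphi_0-\varphi_b)+(\bpsi_b-\bpsi_0)\times\bn\rangle_{\partial\T_h}-s_1(e_\lambda,e_{\bm q};\varphi,\bpsi)-(\bbQ_0\bm{\tilde\psi},\varepsilon\nabla_w e_s).
$$
The first term is controlled by Cauchy--Schwarz over faces together with $\|\bu-\mathcalQ_h\bu\|_{\partial T}\lesssim h_T^{k+\theta-1/2}\|\bu\|_{k+\theta,T}$ and the fractional ($1/2<\alpha\le1$) trace/approximation estimates $\|\varphi_0-\varphi_b\|_{\partial T}+\|(\bpsi_0-\bpsi_b)\times\bn\|_{\partial T}\lesssim h_T^{\alpha-1/2}(\|\tilde\phi\|_{\alpha,T}+\|\bm{\tilde\psi}\|_{\alpha,T})$; the second by $|s_1(e_\lambda,e_{\bm q};\varphi,\bpsi)|\le\3bar(e_\lambda,e_{\bm q})\3bar\,\3bar(\varphi,\bpsi)\3bar$, bounding the first factor via Theorem \ref{THM:ErrorEstimate4Triple} and the second, through the same estimates, by $h^{\alpha-1}(\|\tilde\phi\|_\alpha+\|\bm{\tilde\psi}\|_\alpha)$. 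Both are then $\lesssim h^{k+\theta+\alpha-1}\|\bu\|_{k+\theta}(\|\tilde\phi\|_\alpha+\|\bm{\tilde\psi}\|_\alpha)$.

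The term $(\bbQ_0\bm{\tilde\psi},\varepsilon\nabla_w e_s)$ is the crux. Here I would arrange $\bm{\tilde\psi}$ in \eqref{EQ:2020-02-09:101} so that $\nabla\cdot(\varepsilon\bm{\tilde\psi})=0$ and $\langle\varepsilon\bm{\tilde\psi}\cdot\bn_i,1\rangle_{\Gamma_i}=0$, then apply the definition of $\nabla_w e_s$ on each $T$ against the polynomial $\varepsilon\bbQ_0\bm{\tilde\psi}$, integrate the volume piece back by parts, and use that $\varepsilon(\bbQ_0\bm{\tilde\psi}-\bm{\tilde\psi})$ is $L^2(T)$-orthogonal to $\nabla r_0$ for $r_0\in P_k(T)$. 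Summing over $\T_h$, the pieces containing $\bm{\tilde\psi}$ itself cancel (interior normal traces meet from both sides; the boundary traces pair the constant-per-component values of $s_h$ against $\langle\varepsilon\bm{\tilde\psi}\cdot\bn_i,1\rangle_{\Gamma_i}=0$), leaving only
$$
(\bbQ_0\bm{\tilde\psi},\varepsilon\nabla_w e_s)=\sum_{T\in\T_h}\langle e_{s,b}-e_{s,0},\,\varepsilon(\bbQ_0\bm{\tilde\psi}-\bm{\tilde\psi})\cdot\bn\rangle_{\partial T}.
$$
Cauchy--Schwarz, the definition of $\3bar e_s\3bar$, the bound $\|\bbQ_0\bm{\tilde\psi}-\bm{\tilde\psi}\|_{\partial T}\lesssim h_T^{\alpha-1/2}\|\bm{\tilde\psi}\|_{\alpha,T}$, and the hypothesis $\gamma\ge-1$ — which is exactly what makes $\sum_T h_T^{\gamma+2\alpha-1}$ controllable by $h^{2\alpha-2}$ — yield $|(\bbQ_0\bm{\tilde\psi},\varepsilon\nabla_w e_s)|\lesssim\3bar e_s\3bar\,h^{\alpha-1}\|\bm{\tilde\psi}\|_\alpha\lesssim h^{k+\theta+\alpha-1}\|\bu\|_{k+\theta}\|\bm{\tilde\psi}\|_\alpha$ via Theorem \ref{THM:ErrorEstimate4Triple} once more.

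Combining the three estimates gives $\|\varepsilon^{1/2}(e_\bu-\tilde\boldeta)\|^2\lesssim h^{k+\theta+\alpha-1}\|\bu\|_{k+\theta}(\|\tilde\phi\|_\alpha+\|\bm{\tilde\psi}\|_\alpha)$; invoking the regularity hypothesis \eqref{EQ:regularity-assumption-helmholtz-01new} and the uniform positive-definiteness of $\varepsilon$ to write $\|\tilde\phi\|_\alpha+\|\bm{\tilde\psi}\|_\alpha\lesssim\|e_\bu-\tilde\boldeta\|_0\lesssim\|\varepsilon^{1/2}(e_\bu-\tilde\boldeta)\|$ and absorbing one factor proves \eqref{EQ:2020-02-09:102} with $\boldeta=\tilde\boldeta$. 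I expect the main obstacle to be the term $(\bbQ_0\bm{\tilde\psi},\varepsilon\nabla_w e_s)$ — specifically the summation-by-parts collapse to face terms, which requires the right choice of the vector potential $\bm{\tilde\psi}$ and careful bookkeeping of the powers of $h_T$ so that the free parameter range $\gamma\ge-1$ is precisely enough; a secondary technical point is that the boundary-consistency term forces the use of fractional-order ($1/2<\alpha\le1$) trace-approximation estimates rather than the usual integer-order ones.
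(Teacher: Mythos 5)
Your proposal follows essentially the same route as the paper's proof: choose $\boldeta=\tilde\boldeta$ from the Helmholtz decomposition of $e_{\bu}$, test the first error equation with $\varphi=Q_h\tilde\phi$ and $\bpsi=\bbQ_h\bm{\tilde\psi}$, use the commutativity of Lemma \ref{Lemma5.1} to produce $\|\varepsilon^{1/2}(e_{\bu}-\tilde\boldeta)\|^2$ plus the residual $(\bbQ_0\bm{\tilde\psi},\varepsilon\nabla_w e_s)$, collapse that residual to the face term $\langle\varepsilon(\bm{\tilde\psi}-\bbQ_0\bm{\tilde\psi})\cdot\bn,e_{s,0}-e_{s,b}\rangle_{\partial\T_h}$ via $\nabla\cdot(\varepsilon\bm{\tilde\psi})=0$, and close with Theorem \ref{THM:ErrorEstimate4Triple} and the regularity assumption \eqref{EQ:regularity-assumption-helmholtz-01new} to absorb one factor. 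The estimates you give (including the role of $\gamma\ge-1$, which yields the paper's $h^{\alpha+(\gamma-1)/2}\le h^{\alpha-1}$) match the paper's, so the argument is correct and not a genuinely different approach.
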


\begin{proof}
From the first error equation \eqref{EQ:div-curl:EE:November-08:ee:01}, we have
\begin{equation}\label{EQ:div-curl:EE:November-08:ee:01-new}
\begin{split}
s_1(e_\lambda, e_{\bm{q}};\varphi, \bm{\psi})
+ B(e_\bu, e_s; \varphi,\bpsi)
= &\langle \bu-\mathcalQ_h\bu, \varepsilon\bn(\varphi_0-\varphi_b) + (\bpsi_b-\bpsi_0)\times\bn\rangle_{\partial\T_h}.
\end{split}
\end{equation}
By letting $\varphi=Q_h\tilde\phi$ and $\bm{\psi}=Q_h\bm{\tilde\psi}$, we obtain from Lemma \ref{Lemma5.1}
\begin{equation}\label{EQ:11:08:300}
\begin{split}
B(e_\bu, e_s; \varphi,\bm{\psi})=&(e_\bu, \varepsilon\nabla_w Q_h\tilde\phi + \nabla_w\times \bbQ_h\bm{\tilde\psi})+(\bbQ_0\bm{\tilde\psi},\varepsilon\nabla_w e_s)\\
=&(e_\bu, \varepsilon\mathcalQ_h\nabla_w \tilde\phi + \mathcalQ_h\nabla_w\times \bm{\tilde\psi})+(\bbQ_0\bm{\tilde\psi},\varepsilon\nabla_w e_s)\\
= & (e_\bu, \varepsilon\nabla \tilde\phi + \nabla\times \bm{\tilde\psi})+(\bbQ_0\bm{\tilde\psi},\varepsilon\nabla_w e_s)\\
= & (\varepsilon e_\bu, e_\bu - \tilde\boldeta ) + (\varepsilon \bbQ_0\bm{\tilde\psi},\nabla_w e_s)\\
= & (\varepsilon (e_\bu - \tilde\boldeta), e_\bu - \tilde\boldeta ) + (\varepsilon \bbQ_0\bm{\tilde\psi},\nabla_w e_s).
\end{split}
\end{equation}
From the definition of the weak gradient we have
\begin{equation*}
\begin{split}
(\varepsilon \bbQ_0\bm{\tilde\psi},\nabla_w e_s) = &(\varepsilon \bbQ_0\bm{\tilde\psi},\nabla e_{s,0}) + \langle \varepsilon \bbQ_0\bm{\tilde\psi}\cdot\bn, e_{s,b}-e_{s,0}\rangle_{\partial\T_h} \\
= & (\varepsilon\bm{\tilde\psi},\nabla e_{s,0}) + \langle\varepsilon \bbQ_0\bm{\tilde\psi}\cdot\bn, e_{s,b}-e_{s,0}\rangle_{\partial\T_h} \\
= & -(\nabla\cdot(\varepsilon\bm{\tilde\psi}),\nabla e_{s,0}) +\langle \varepsilon\bm{\tilde\psi}\cdot\bn, e_{s,0} \rangle_{\partial\T_h} + \langle \varepsilon \bbQ_0\bm{\tilde\psi}\cdot\bn, e_{s,b}-e_{s,0}\rangle_{\partial\T_h}\\
= & \langle \varepsilon\bm{\tilde\psi}\cdot\bn, e_{s,0} - e_{s,b}\rangle_{\partial\T_h} + \langle\varepsilon \bbQ_0\bm{\tilde\psi}\cdot\bn, e_{s,b}-e_{s,0}\rangle_{\partial\T_h}\\
= & \langle \varepsilon(\bm{\tilde\psi} - \bbQ_0\bm{\tilde\psi})\cdot\bn, e_{s,0} - e_{s,b}\rangle_{\partial\T_h}.
\end{split}
\end{equation*}
Substituting the above into \eqref{EQ:11:08:300} then \eqref{EQ:div-curl:EE:November-08:ee:01-new} yields
\begin{equation*}
\begin{split}
\|\varepsilon^{1/2}(e_\bu-\tilde\boldeta)\|^2 = & B(e_\bu, e_s; \varphi,\bm{\psi}) - \langle \varepsilon(\bm{\tilde\psi} - \bbQ_0\bm{\tilde\psi})\cdot\bn, e_{s,0} - e_{s,b}\rangle_{\partial\T_h}\\
= & \langle \bu-\mathcalQ_h\bu, \varepsilon\bn(\varphi_0-\varphi_b) + (\bpsi_b-\bpsi_0)\times\bn\rangle_{\partial\T_h} - s_1(e_\lambda, e_{\bm{q}};\varphi, \bm{\psi}) \\
& - \langle\varepsilon (\bm{\tilde\psi} - \bbQ_0\bm{\tilde\psi})\cdot\bn, e_{s,0} - e_{s,b}\rangle_{\partial\T_h},
\end{split}
\end{equation*}
which leads to
\begin{equation*}
\begin{split}
\|\varepsilon^{1/2}(e_\bu-\tilde\boldeta)\|^2 \lesssim
& h^\theta \|\bu-\mathcalQ_h\bu\|_\theta \3bar (\varphi, \bpsi)\3bar + \3bar (e_\lambda, e_{\bm{q}})\3bar
\3bar (\varphi, \bm{\psi})\3bar \\
& + h^{\alpha +(\gamma-1)/2} \|\bm{\tilde\psi}\|_\alpha  \3bar e_{s}\3bar.
\end{split}
\end{equation*}
It can be proved that
$$
\3bar (\varphi, \bpsi)\3bar \lesssim h^{\alpha-1} \|(\tilde\varphi, \tilde\bpsi)\|_\alpha.
$$
It follows that
\begin{equation*}
\begin{split}
\|\varepsilon^{1/2}(e_\bu-\tilde\boldeta)\|^2 \lesssim
& h^\theta \|\bu-Q_h\bu\|_\theta  h^{\alpha-1} \|(\tilde\varphi, \tilde\bpsi)\|_\alpha + \3bar (e_\lambda, e_{\bm{q}})\3bar  h^{\alpha-1} \|(\tilde\varphi, \tilde\bpsi)\|_\alpha \\
& +  h^{\alpha+(\gamma-1)/2} \|\bm{\tilde\psi}\|_\alpha  \3bar e_{s}\3bar.
\end{split}
\end{equation*}
so that
\begin{equation*}
\begin{split}
\|\varepsilon^{1/2}(e_\bu-\tilde\boldeta)\| \lesssim
& h^{\alpha+\theta-1} \|\bu-Q_h\bu\|_\theta  + h^{\alpha-1} \3bar (e_\lambda, e_{\bm{q}}) \3bar
+ h^{\alpha+(\gamma-1)/2} \3bar e_{s}\3bar\\
\leq & h^{k+\theta+\alpha-1} \|\bu\|_{k+\theta},
\end{split}
\end{equation*}
which gives rise to the error estimate \eqref{EQ:2020-02-09:102}.
\end{proof}

\section{Numerical Experiments}\label{Section:7}
In this section, we present some numerical results for the PDWG finite
element method proposed and analyzed in previous sections. For simplicity, we choose the lowest order PDWG element; i.e., $k = 0$ so that the solution $\boldsymbol{u}$ is approximated by discontinuous piecewise constant vector fields. The exact solution $\boldsymbol{u}$ has various regularities ranging from smooth to corner singular. The computational domain
includes convex and non-convex polyhedral regions; some have cavities or multiple toroidal topology. The implementation uses an open-source and publicly available MATLAB package iFEM \cite{chen2008ifem}. The computational domain is first partitioned into cubes, and each cube is further divided into 6 tetrahedra of equi-volume to form a shape-regular finite element partition. On each tetrahedral element $T$ with boundary $\partial T = \cup_{i=1}^{4} F_i$, the local finite element space consists of functions given as follows:
\begin{align}
& \boldsymbol{u}_{h}|_{T} \in \left[P_{0}(T)\right]^{3},
\\
& s_{h}|_{T}=\left\{s_{0}, s_{b}\right\}
\in\left\{P_{0}(T), \Pi_{i=1}^4 P_{0}(F_i)\right\},
\\
& \varphi_{h}|_{T}=\left\{\varphi_{0}, \varphi_{b}\right\}
\in\left\{P_{0}(T), \Pi_{i=1}^4 P_{0}(F_i)\right\},
\\
& \boldsymbol{\psi}_{h}|_{T}=\left\{\boldsymbol{\psi}_{0},
\boldsymbol{\psi}_{b}\right\}
\in\left\{\left[P_{0}(T)\right]^{3}, T_0(\partial T)\right\}.
\end{align}
Among the spaces above, $T_0(\partial T)$ is tangetial to the boundary and is given by
\[
T_0(\partial T):= \{\boldsymbol{\psi}: \boldsymbol{\psi}_{F_{i,j}} \in
\left[P_{0}(F_{i})\right]^{3}\times \boldsymbol{n}_{F_i}, \; j=1,2 \text{ and }
i=1,2,3,4 \},
\]
where $\boldsymbol{n}_{F_i}$ is the outer unit normal vector to face $F_i$. The
basis functions for the first three spaces are straightforward. For
$T_0(\partial T)$, on each face $F_i$ we choose the normalized vectors representing
the directional vector of any 2 edges ($j=1,2$) among 3 on $\partial F_i$, such that
its weak curl is the co-normal vector of this edge with respect to $F_i$ ($i=
1,2,3,4$),
\[
\nabla_w\times \left\{\boldsymbol{0}, \boldsymbol{\psi}_{F_{i,j}}\right\}
= 3\boldsymbol{\psi}_{F_{i,j}} \times (\nabla \zeta_i),
\]
where $\zeta_i$ is the barycentric coordinate associated with the vertex opposite to
face $F_i$.

For each test problem, we specify a vector field $\boldsymbol{u}$ as the true solution,
while the right-hand side of the div-curl system \eqref{EQ:div-curl:div-eq}-\eqref{EQ:div-curl:normalBC} are computed accordingly. We shall
evaluate the following errors for the PDWG finite element solution:
\begin{align}
& \|\varepsilon^{1/2} \mathbf{e}_{\boldsymbol{u}}\| :=
\|\varepsilon^{1/2}(\boldsymbol{u} -
\boldsymbol{u}_h)\|,
\\
& \tnorm{(e_{\lambda}, \mathbf{e}_{\boldsymbol{q}})} : =
\big(s_1(\lambda_h, \boldsymbol{q}_h; \lambda_h, \boldsymbol{q}_h) \big)^{1/2},
\\
& \tnorm{e_{s}} : = \big(s_2(s_h; s_h) \big)^{1/2},
\end{align}
where the $L^2(\Omega)$-norm $\|\varepsilon^{1/2} \mathbf{e}_{\boldsymbol{u}}\|$ is computed by using a higher order Gaussian quadrature on each element.

\subsection{Example 1}
\label{subsec:ex1}
$\Omega = (0,1)^3$, $\varepsilon = \operatorname{diag}(3,2,1)$, the true
solution $\boldsymbol{u}\in \big(H^1(\Omega)\big)^3$ is given by
\[
\boldsymbol{u}(x, y, z) =
\left(\begin{array}{c}
\sin(\pi x)\cos(\pi y)
\\
-\sin(\pi y)\cos(\pi x)
\\
0
\end{array}
\right)
+
\left(\begin{array}{c}
x
\\
y
\\
z
\end{array}
\right).
\]
The performance of the PDWG finite element solution for this test problem is illustrated in
Table \ref{table:ex1}. It can be seen that $\mathbf{e}_{\boldsymbol{u}}$, $(e_{\lambda},
\mathbf{e}_{\boldsymbol{q}})$, and $e_{s}$ all have optimal rate of convergence for $k=0$ as showen in Theorem \ref{THM:ErrorEstimate4uh}. The plot of the true solution and the PDWG approximation can be found in Figure \ref{fig:ex1}.

\begin{table}[htbp]\caption{Errors and corresponding rates of convergence for
Example 1.}
\label{table:ex1}
\centering
\begin{tabular}{|c |c | c | c | c | c | c |}
\hline
 $1/h$ &  $\|\varepsilon^{1/2} \mathbf{e}_{\boldsymbol{u}}\|$ & rate &
          $\tnorm{(e_{\lambda}, \mathbf{e}_{\boldsymbol{q}})}$  & rate
          & $\tnorm{e_{s}}$ & rate
\\
\hline
2& 1.64e-1 &      --   &2.95e-1&     --   &3.96e-2&       --
\\
4&  8.16e-2  & 1.01  & 1.68e-1 &  0.82  & 1.86e-2 &  1.09
\\
8& 3.93e-2   & 1.03  & 8.72e-2  & 0.88   &8.79e-3  & 1.09
\\
16& 1.93e-2  & 1.03 &  4.41e-2   &0.98  & 4.48e-3   &0.98
\\
\hline
\end{tabular}
\end{table}

\begin{figure}[h]
  \centering
\begin{subfigure}[b]{0.3\linewidth}
    \centering
    \includegraphics[width=0.9\textwidth]{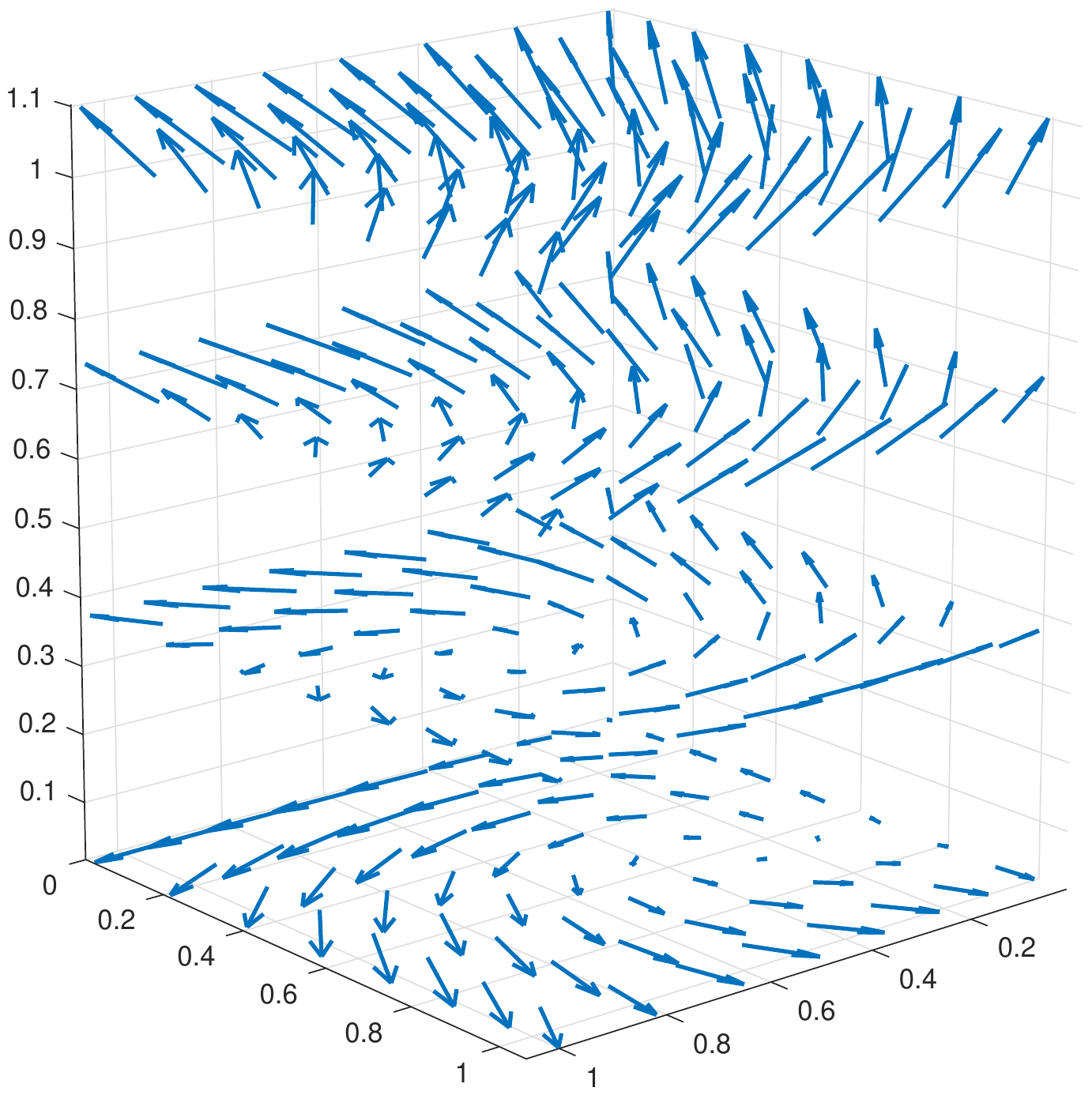}
    \caption{\label{fig:ex1-uexact}}
\end{subfigure}%
\;
\begin{subfigure}[b]{0.3\linewidth}
      \centering
      \includegraphics[width=0.9\textwidth]{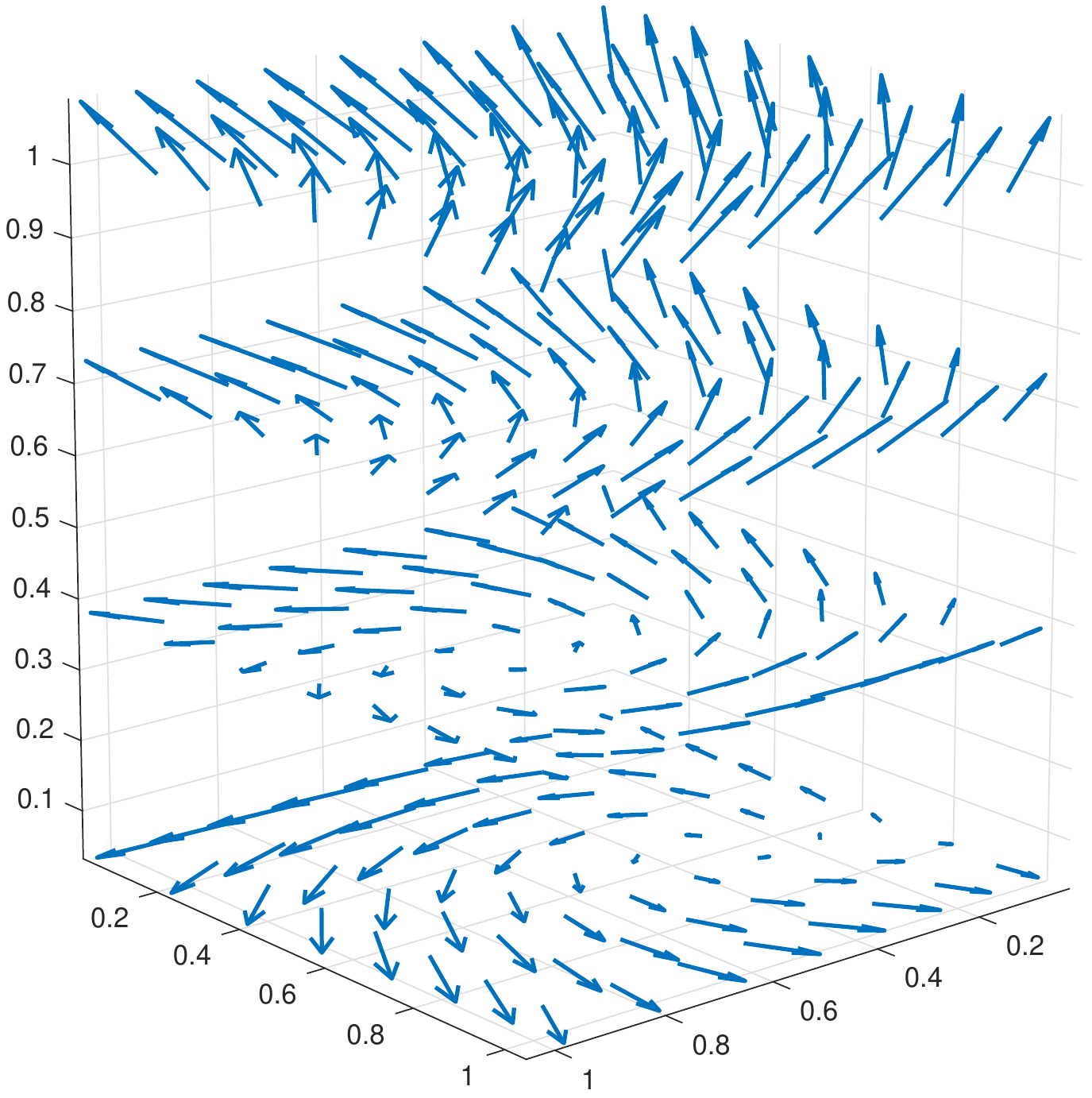}
      \caption{\label{fig:ex1-uwg}}
\end{subfigure}
\;
\begin{subfigure}[b]{0.35\linewidth}
      \centering
      \includegraphics[width=0.9\textwidth]{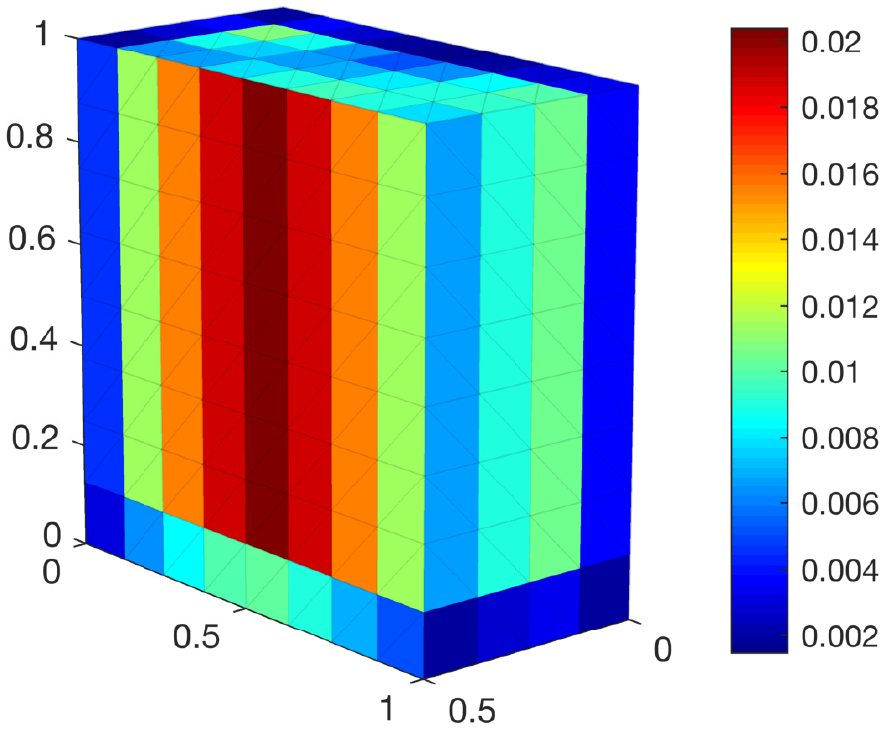}
      \caption{\label{fig:ex1-error}}
\end{subfigure}
\caption{The true solution vector field shown in (\subref{fig:ex1-uexact}) of
example \ref{subsec:ex1} versus the WG approximation (\subref{fig:ex1-uwg}). The
vector fields are plotted on four $z=c$ planes. The distribution of
$\|\varepsilon^{1/2} \mathbf{e}_{\boldsymbol{u}}\|_{T}$ locally is plotted in
(\subref{fig:ex1-error}) on the cut plane $x=1/2$ with $h=1/8$. }
\label{fig:ex1}
\end{figure}

\subsection{Example 2}
\label{subsec:ex2}
The second example is adopted from \cite{li2018weak} with $\varepsilon = I$ and a
singular solution in $\big(H^{1+\frac{2}{3}-\epsilon}(\Omega)\big)^3$:
\[
\boldsymbol{u}(x, y, z)=
\left(\begin{array}{c}
{x(1-x) }
\\
{y(1-y) }
\\
{r^{2 / 3} \sin (2 \theta)z(1-z)}
\end{array}
\right),
\]
in which $r = \sqrt{x^2+y^2}$, and $\theta = \arctan(y/x)+ c$ in the cylindrical
coordinates. Similar to example 1, the result in Table \ref{table:ex2} shows
optimal rates of convergence for $\mathbf{e}_{\boldsymbol{u}}$, 
while slightly sub-optimal in the coarsest two levels for $(e_{\lambda}, \mathbf{e}_{\boldsymbol{q}})$, and $e_{s}$, respectively.  
The plot of the true solution and the PDWG approximation can be found in Figure \ref{fig:ex2}.

%

\begin{table}[htbp]\caption{Errors and corresponding rates of convergence for
Example 2.}
\label{table:ex2}
\centering
\begin{tabular}{| c |c | c | c | c | c | c |}
\hline
 $1/h$ &  $\|\varepsilon^{1/2} \mathbf{e}_{\boldsymbol{u}}\|$ & rate &
          $\tnorm{(e_{\lambda}, \mathbf{e}_{\boldsymbol{q}})}$  & rate
          & $\tnorm{e_{s}}$ & rate
\\
\hline
2& 1.13e-1   &  --   & 2.07e-1  &  --    & 1.74e-2 &   --
\\
4& 5.20e-2   & 1.12  & 1.20e-1  &  0.78  & 1.05e-2 &  0.73
\\
8& 2.50e-2   & 1.05  & 6.27e-2  &  0.94  & 5.53e-3 &  0.93
\\
16& 1.23e-2  & 1.02  & 3.18e-2  &  0.98  & 2.82e-3 &  0.97
\\
\hline
\end{tabular}
\end{table}

\begin{figure}[h]
  \centering
\begin{subfigure}[b]{0.3\linewidth}
    \centering
    \includegraphics[width=0.9\textwidth]{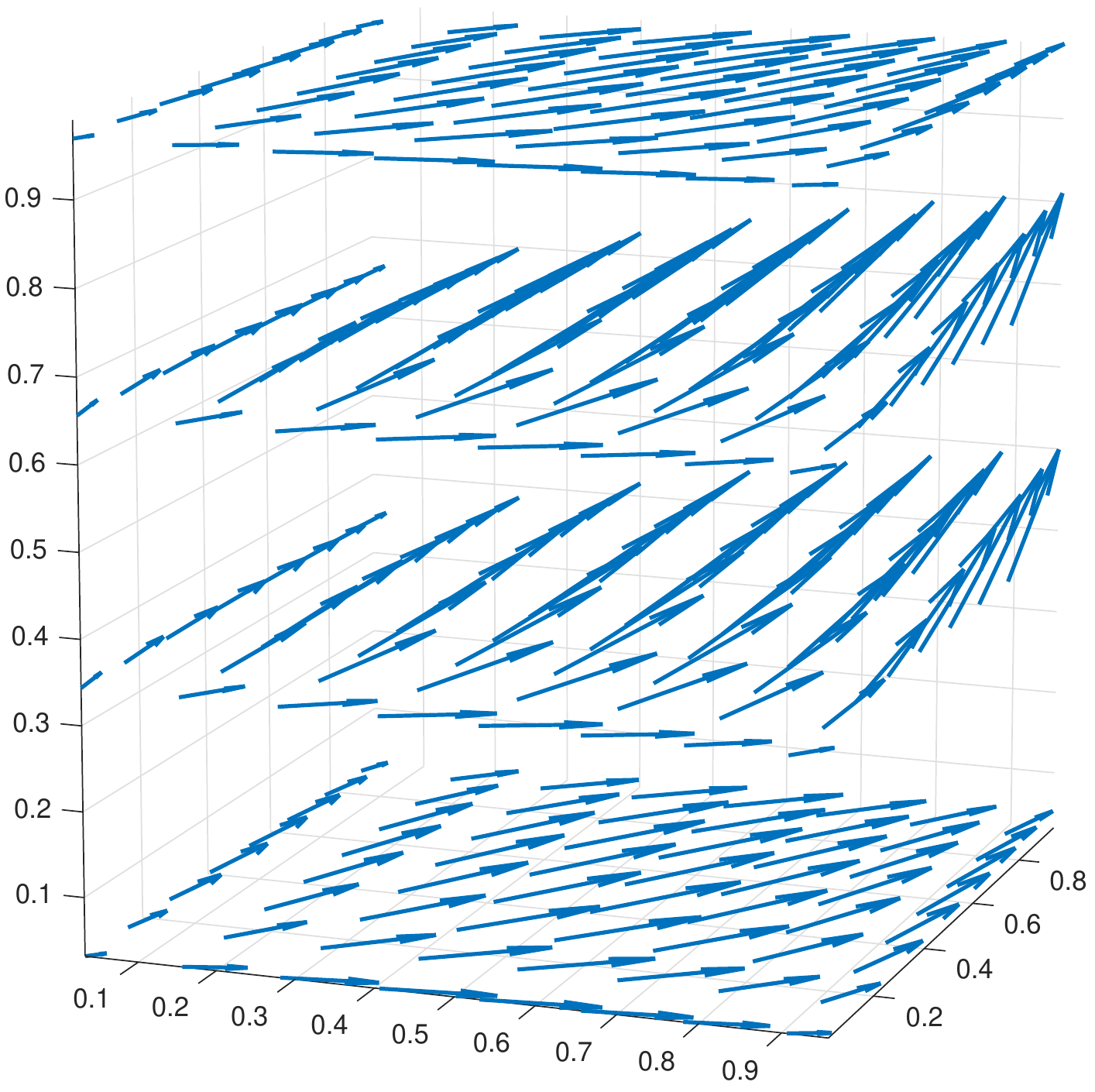}
    \caption{\label{fig:ex2-uexact}}
\end{subfigure}%
\;
\begin{subfigure}[b]{0.3\linewidth}
      \centering
      \includegraphics[width=0.9\textwidth]{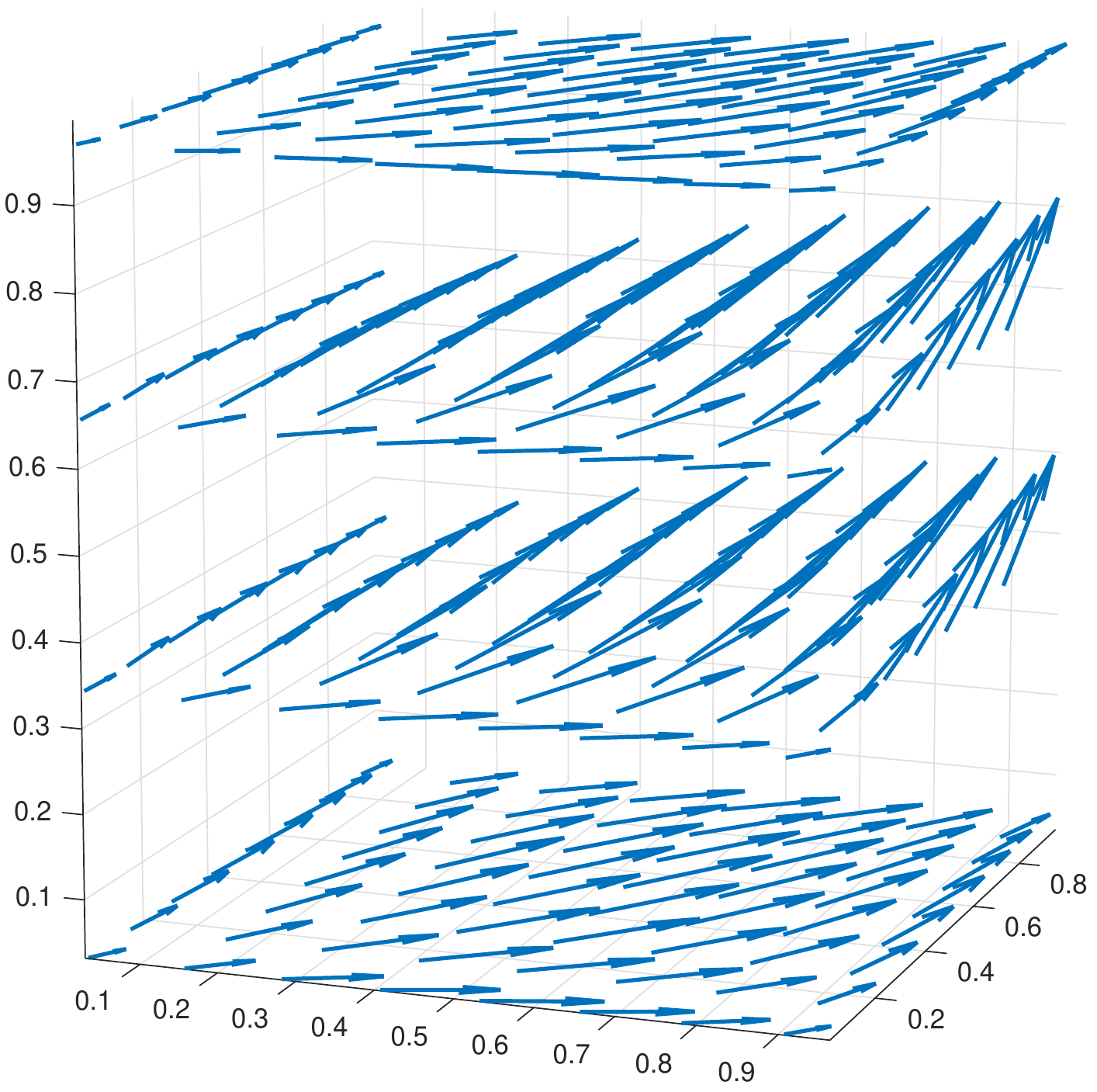}
      \caption{\label{fig:ex2-uwg}}
\end{subfigure}
\;
\begin{subfigure}[b]{0.35\linewidth}
      \centering
      \includegraphics[width=0.9\textwidth]{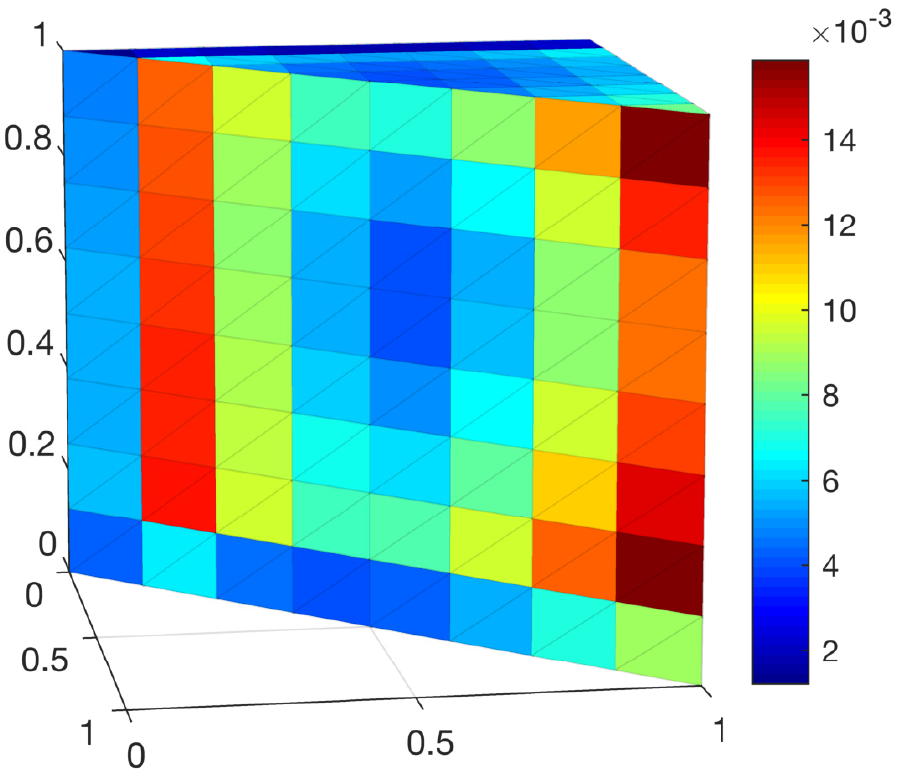}
      \caption{\label{fig:ex2-error}}
\end{subfigure}
\caption{The true solution vector field shown in (\subref{fig:ex2-uexact}) of
example \ref{subsec:ex2} versus the PDWG approximation (\subref{fig:ex2-uwg}). The
vector fields are plotted on four $z=c$ planes. The distribution of
$\|\varepsilon^{1/2} \mathbf{e}_{\boldsymbol{u}}\|_{T}$ locally is plotted in
(\subref{fig:ex2-error}) on the cut plane $x=y$ with $h=1/8$. }
\label{fig:ex2}
\end{figure}

\subsection{Example 3}
\label{subsec:ex3}
This test problem is defined on $\Omega=(-1,1)^2 \times (0,1) \backslash
[0,1] \times[-1,0] \times[0,1]$ with $\varepsilon = I$ and the singular solution in
$\big(H^{2/3-\epsilon}(\Omega) \big)^3$:
\[
\boldsymbol{u} = \nabla\times \left\langle 0, 0,  r^{2/3}
\sin \Big(\frac{2}{3} \theta\Big)\right\rangle.
\]
%
The true solution $\boldsymbol{u}$ is a solenoidal vector
field (see Figure \ref{fig:ex3-u}). Similar to example 2, $r = \sqrt{x^2+y^2}$ and $\theta = \arctan(y/x)+ c$ are the cylindrical coordinates, where $c$ is chosen such that $\boldsymbol{u}\in
{H}(div)\cap H(curl)$. Since $\boldsymbol{u}(x,y,z)$ has unbounded
derivatives as $(x,y,z)$ approaches $\{x=0, y=0\}\cap \partial \Omega$, the Gaussian
quadrature would yield large error on elements with boundary intersecting
$z$-axis. To overcome this difficulty, we replace the true solution by its $L^2$-projection in the error computation:
\[
\|\varepsilon^{1/2} \mathbf{e}_{\mathcalQ_h\boldsymbol{u}}\|:=
\|\varepsilon^{1/2}(\mathcalQ_h\boldsymbol{u} - \boldsymbol{u}_h)\|.
\]
It is observed that $\mathbf{e}_{\mathcalQ_h\boldsymbol{u}}$, $(e_{\lambda},
\mathbf{e}_{\boldsymbol{q}})$, and $e_{s}$ all have maximum possible rate of convergence ($\approx h^{2/3}$) on the finer meshes (see Table \ref{table:ex3}). The plot of the
true solution and the PDWG approximation can be found in Figure \ref{fig:ex3}.

\begin{figure}[h]
  \centering
\begin{subfigure}[b]{0.4\linewidth}
    \centering
    \includegraphics[width=0.9\textwidth]{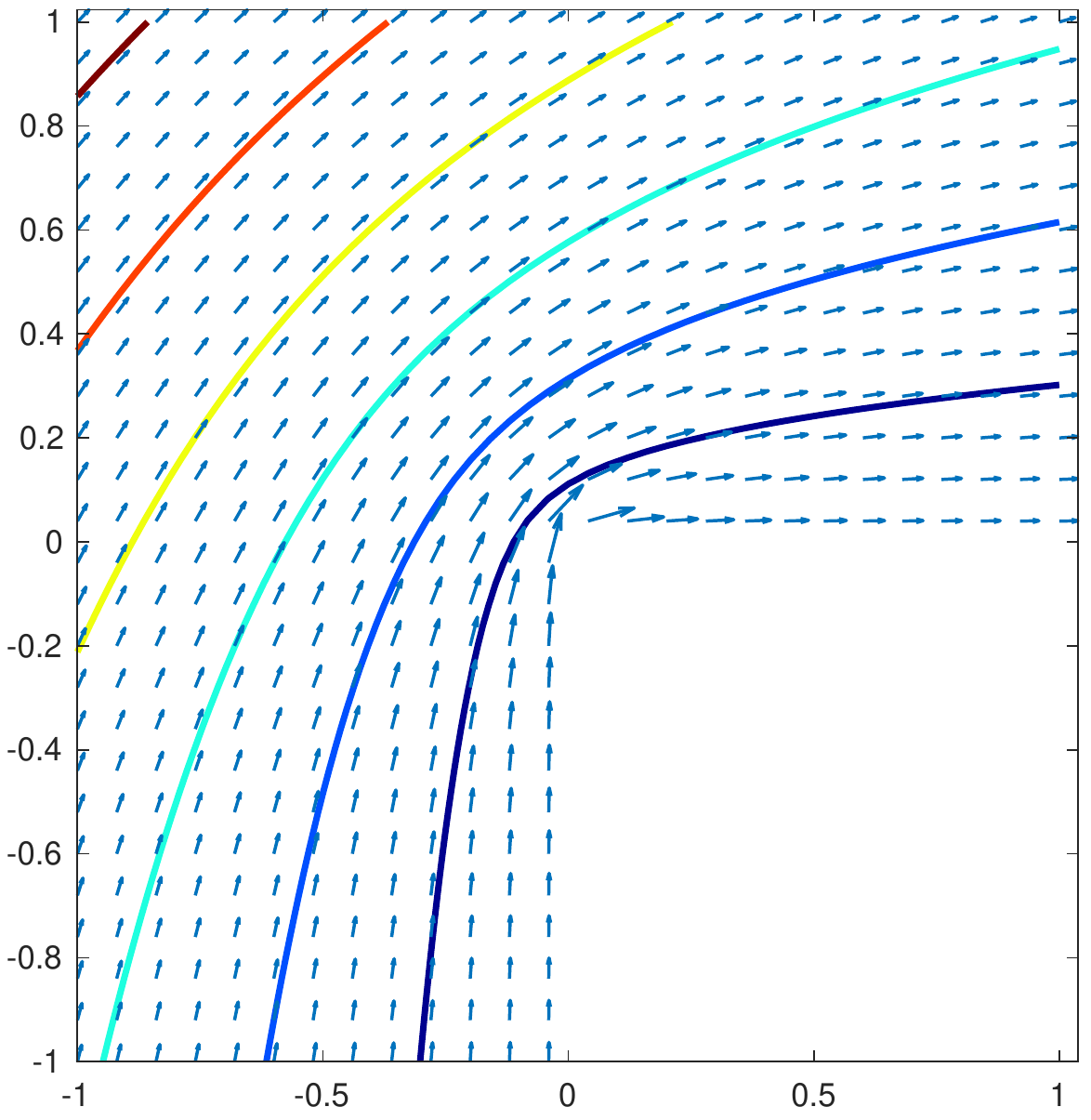}
    \caption{\label{fig:ex3-uexact}}
\end{subfigure}%
\hspace{0.3in}
\begin{subfigure}[b]{0.45\linewidth}
      \centering
      \includegraphics[width=0.9\textwidth]{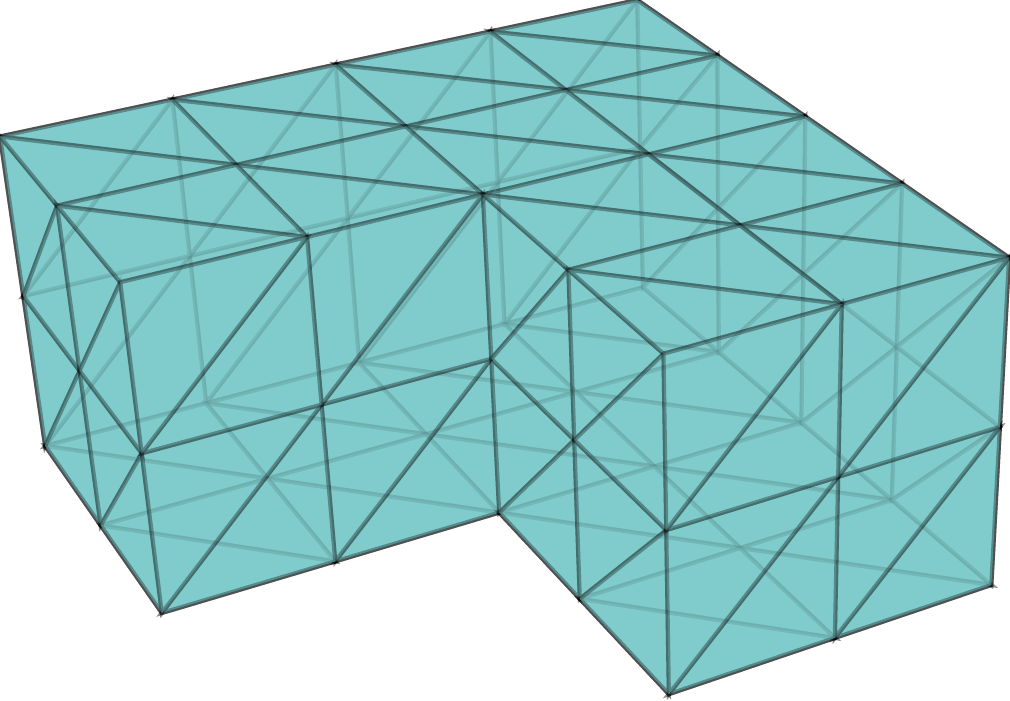}
      \caption{\label{fig:ex3-mesh}}
\end{subfigure}
\caption{The true solution vector field shown in (\subref{fig:ex3-uexact}) of
example \ref{subsec:ex3} view from above on $z=1/4$ plane together with the level
set of its $z$-component. A coarse mesh ($h=1/2$)
used in example \ref{subsec:ex3} is illustrated in (\subref{fig:ex3-mesh}). }
\label{fig:ex3-u}
\end{figure}

\begin{table}[htbp]\caption{Errors and corresponding rates of convergence for
Example 3.}
\label{table:ex3}
\centering
\begin{tabular}{| c |c | c | c | c | c | c |}
\hline
 $1/h$ &  $\|\varepsilon^{1/2} \mathbf{e}_{\mathcalQ_h\boldsymbol{u}}\|$ & rate &
          $\tnorm{(e_{\lambda}, \mathbf{e}_{\boldsymbol{q}})}$  & rate
          & $\tnorm{e_{s}}$ & rate
\\
\hline
2& 5.29e-2   &  --   & 3.35e-1  &  --    & 4.99e-2 &   --
\\
4& 3.13e-2   & 0.75  & 2.19e-1  &  0.61  & 3.30e-2 &  0.60
\\
8& 1.91e-2   & 0.72  & 1.41e-1  &  0.63  & 2.14e-2 &  0.62
\\
16& 1.16e-2  & 0.72  & 9.03e-2  &  0.65  & 1.37e-2 &  0.64
\\
\hline
\end{tabular}
\end{table}

\begin{figure}[h]
  \centering
\begin{subfigure}[b]{0.3\linewidth}
    \centering
    \includegraphics[width=0.9\textwidth]{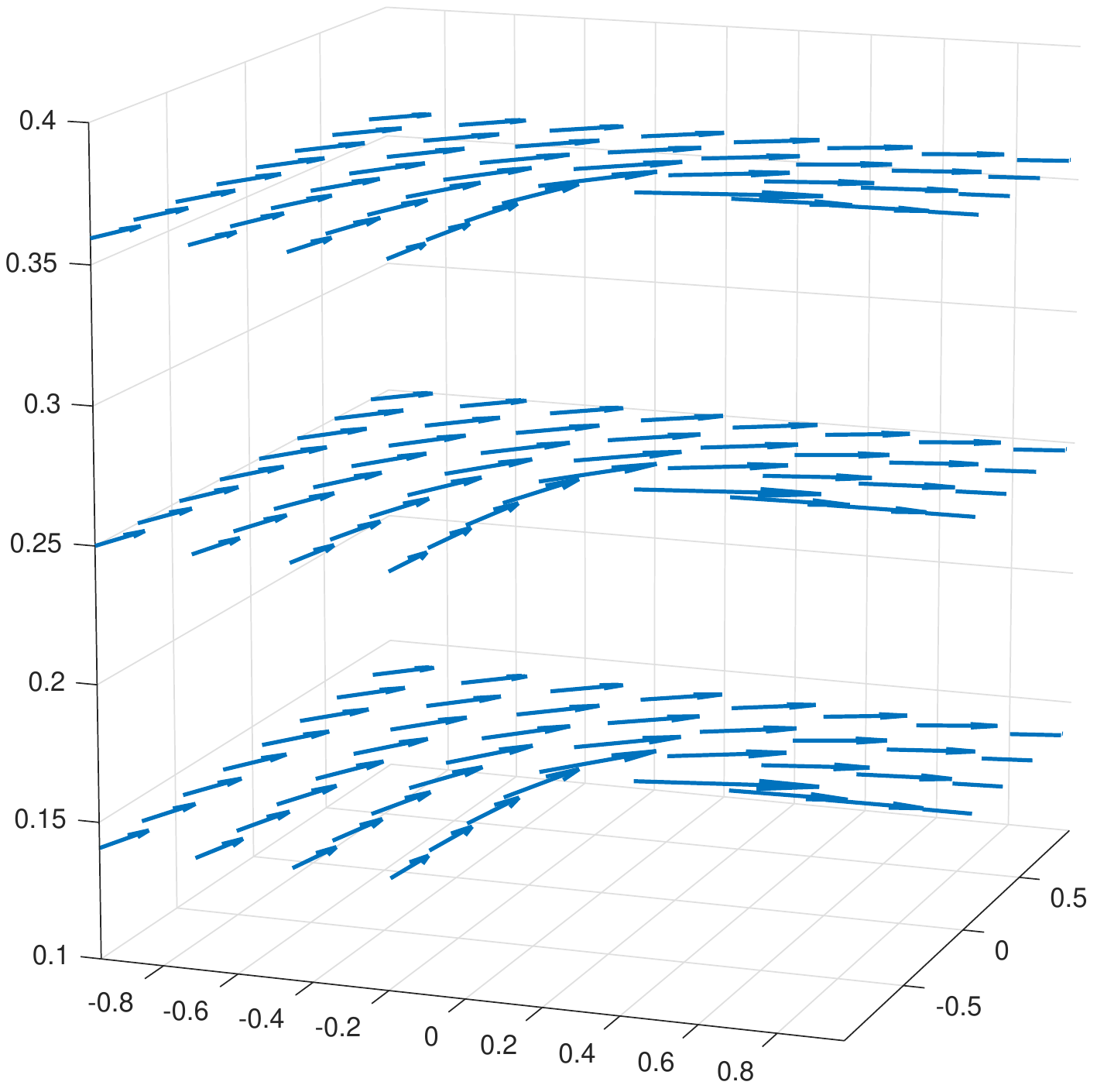}
    \caption{\label{fig:ex3-Qu}}
\end{subfigure}%
\;
\begin{subfigure}[b]{0.3\linewidth}
      \centering
      \includegraphics[width=0.9\textwidth]{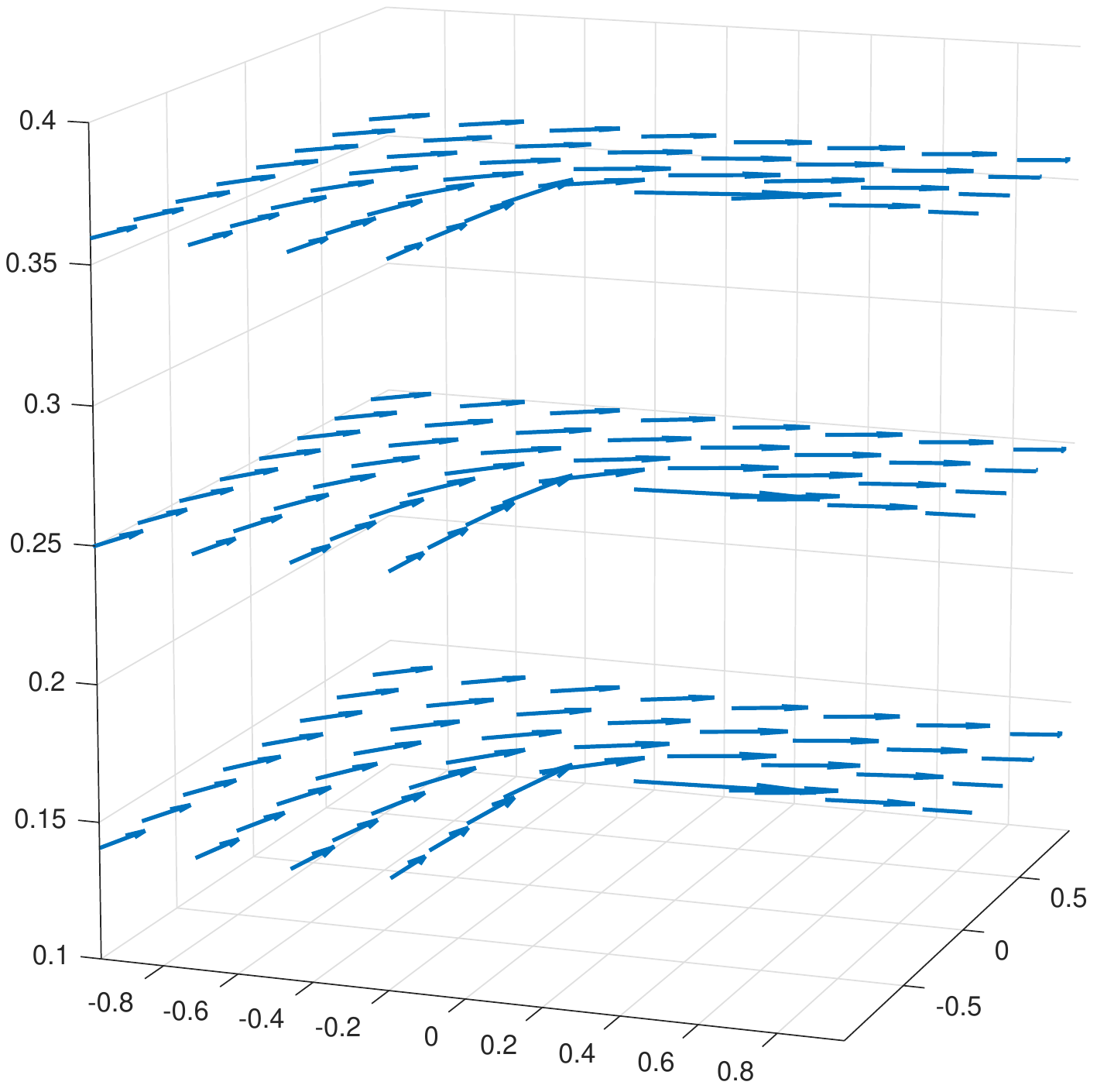}
      \caption{\label{fig:ex3-uwg}}
\end{subfigure}
\;
\begin{subfigure}[b]{0.35\linewidth}
      \centering
      \includegraphics[width=0.9\textwidth]{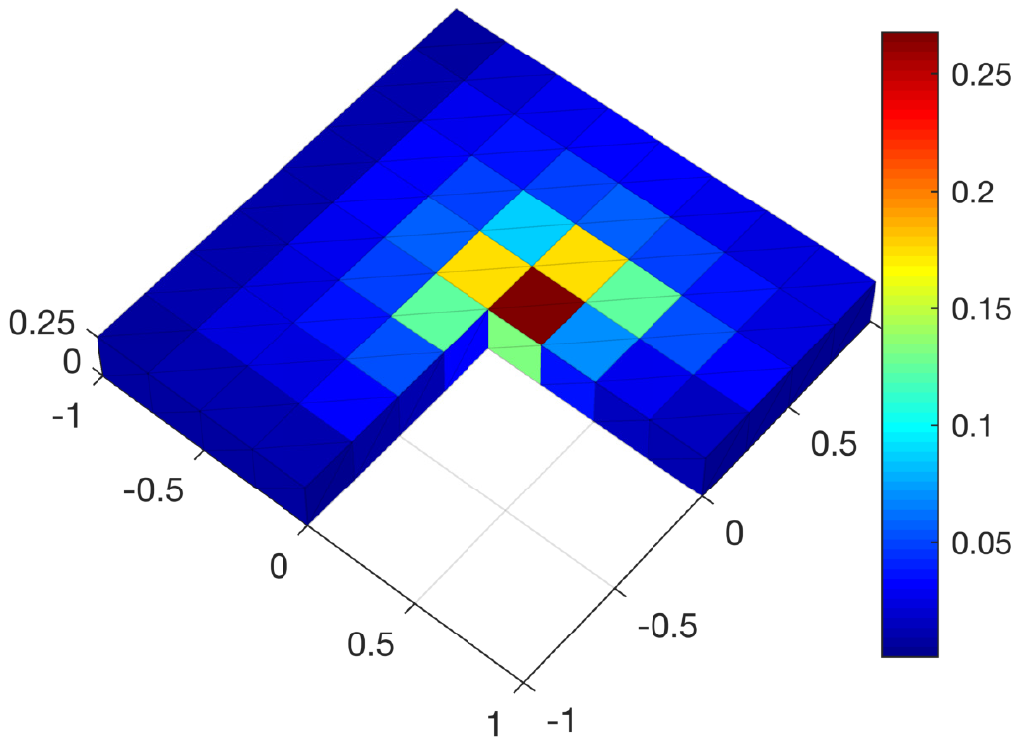}
      \caption{\label{fig:ex3-error}}
\end{subfigure}
\caption{The vector field of $\mathcalQ_h\boldsymbol{u}$ is shown in (\subref{fig:ex3-Qu}) of
example \ref{subsec:ex3} versus the PDWG approximation (\subref{fig:ex3-uwg}). The
vector fields are plotted on several $z=c$ planes. The distribution of
$\|\varepsilon^{1/2} \mathbf{e}_{\mathcalQ_h\boldsymbol{u}}\|_{T}$ locally is plotted in
(\subref{fig:ex3-error}) on the cut plane $z=1/4$ when $h=1/8$. }
\label{fig:ex3}
\end{figure}

\subsection{Example 4}
\label{subsec:ex4}
This test problem is defined on the domain $\Omega = (-3/2, 1/2)^3\backslash [-1,0]^3$
such that the domain boundary $\partial \Omega$ consits of two disjoint
connected components $\Gamma_0 = \partial (-3/2, 1/2)^3$ and
$\Gamma_1 = \partial (-1, 0)^3$. The true solution is given by
\[
\boldsymbol{u} = \nabla(r^{1/6}), \quad \text{ with } \quad r =\sqrt{x^2+y^2+z^2}.
\]
It is straightforward to see that $\boldsymbol{u}$ is singular near $(0,0,0)$ which
is one of the nonconvex corners of the cavity (see Figure \ref{fig:ex4-u}), 
and $\boldsymbol{u}\in [H^{2/3-\epsilon}(\Omega)]^3$.

\begin{figure}[h]
  \centering
\begin{subfigure}[b]{0.4\linewidth}
    \centering
    \includegraphics[width=0.8\textwidth]{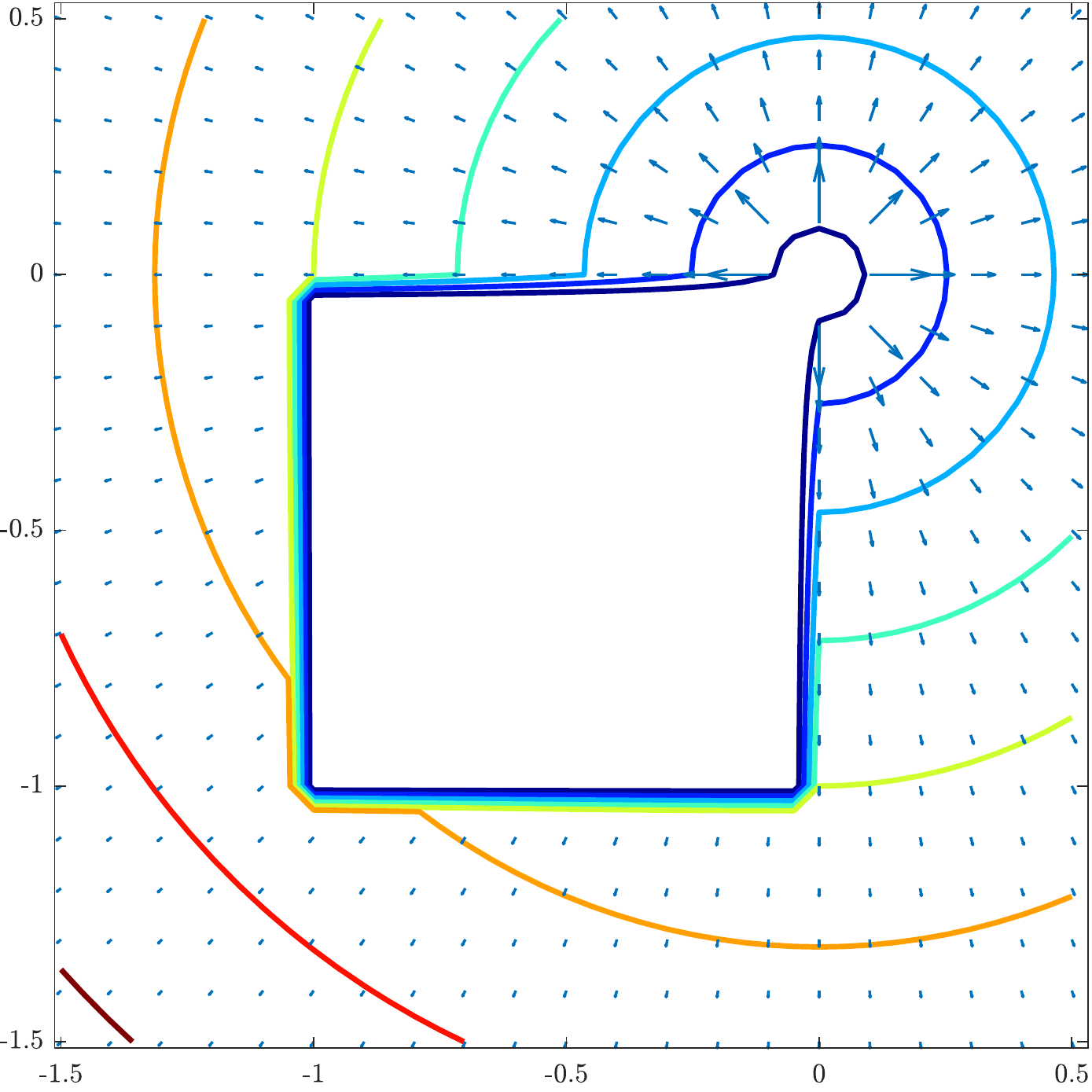}
    \caption{\label{fig:ex4-uexact}}
\end{subfigure}%
\hspace{0.3in}
\begin{subfigure}[b]{0.4\linewidth}
      \centering
      \includegraphics[width=0.8\textwidth]{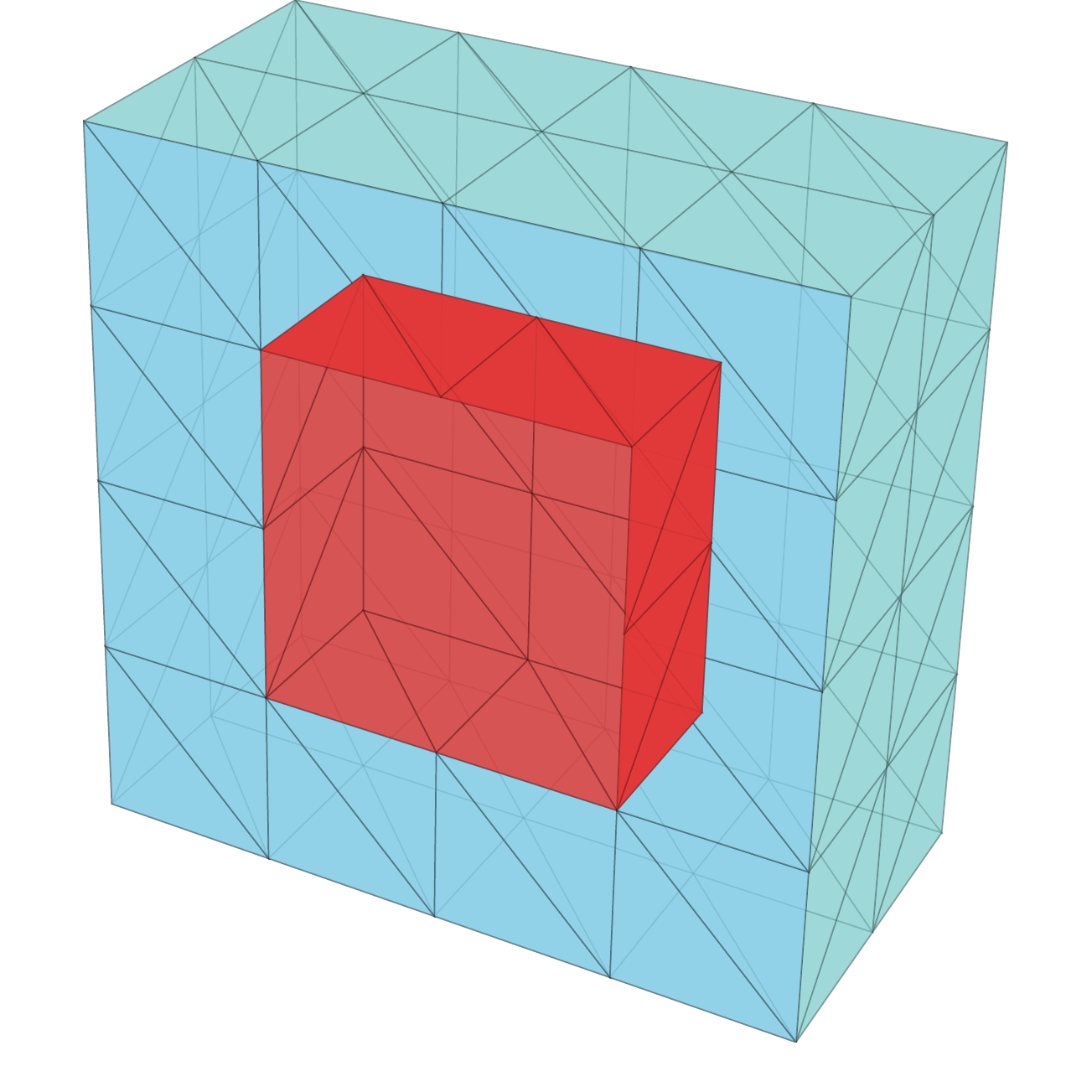}
      \caption{\label{fig:ex4-mesh}}
\end{subfigure}
\caption{(\subref{fig:ex4-uexact}): plot of the true solution field of
example \ref{subsec:ex4}, view from above on $z=0$ plane. (\subref{fig:ex4-mesh}): a coarse mesh ($h=1/2$) used in example \ref{subsec:ex4} cut by the plane $x=0$, with boundary faces of $\Gamma_1$ being highlighted in red. }
\label{fig:ex4-u}
\end{figure}

Table \ref{table:ex4} illustrates the convergence of the PDWG method for example \ref{subsec:ex4}.
It can be seen that $\mathbf{e}_{\mathcalQ_h\boldsymbol{u}}$ has the optimal rate of convergence of $O(h^{2/3})$. On the other hand, the convergence for $(e_{\lambda}, \mathbf{e}_{\boldsymbol{q}})$ and $e_{s}$ seems to be approximating the optimal rate in this numerical test.

For $s = \{s_0, s_b\}$, when solving the algebraic system, we seek
a constant $c_1$ such that $s_b|_{\Gamma_1}=c_1$ through a simple post-processing by treating $s_b|_{\Gamma_1}$ as the fixed DoFs first. Denote by $\mathbf{U}$
the vector representation of the solution $(\boldsymbol{u}_h, \lambda_h, s_h,
\boldsymbol{q}_h)$,
and $\mathbf{U}_s = c_1 \mathbf{S}$, $\mathbf{S}= (0, \cdots, 1, \cdots, 1, \cdots, 0)$, the indicator vector representing a solution with $s_b=1$ on
$\Gamma_1$ while all other DoFs are zero. Let $A$ be the full stiffness
matrix including from all nodal
bases (including boundary faces), while $A^{\rm int}$ be the stiffness matrix for all the free DoFs: including the interior DoFs for $\boldsymbol{u}_h$, $s_h$, and $\boldsymbol{q}_h$, all except 1 fixed DoF for $\lambda_h$.
Let $R$ be the restriction operator such that $R:
\mathbf{U}\mapsto \mathbf{U}^{\rm int}$, which is the vector representing all the aforementioned free DoFs. First we solve the following algebraic system:
\[
A^{\rm int} \mathbf{U}^{\rm int} = R \mathbf{F},
\]
where $\mathbf{F}$ is the full vector of the right hand side. Then the constant $c_1$ is sought by solving the following minimization problem:
\[
c_1 = \operatornamewithlimits{argmin}_{a\in \mathbb{R}}
\| A(\mathbf{U}^{\rm int} + a\mathbf{S}) - F \|_{\ell^2}.
\]
The plot of the projection of the true solution and the PDWG approximation is shown in
Figure \ref{fig:ex4}.

\begin{figure}[h]
  \centering
\begin{subfigure}[b]{0.3\linewidth}
    \centering
    \includegraphics[width=0.9\textwidth]{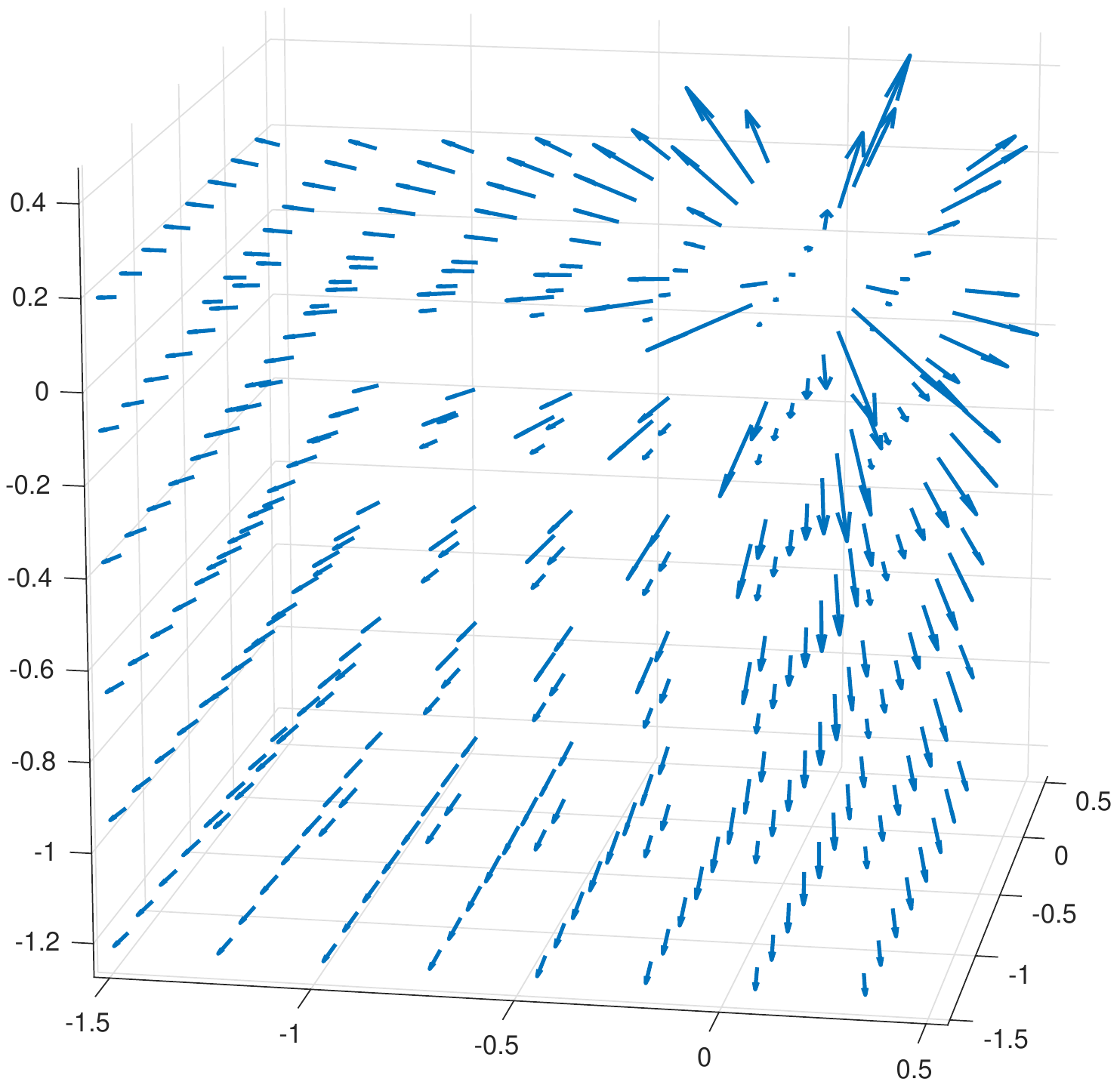}
    \caption{\label{fig:ex4-Qu}}
\end{subfigure}%
\;
\begin{subfigure}[b]{0.3\linewidth}
      \centering
      \includegraphics[width=0.9\textwidth]{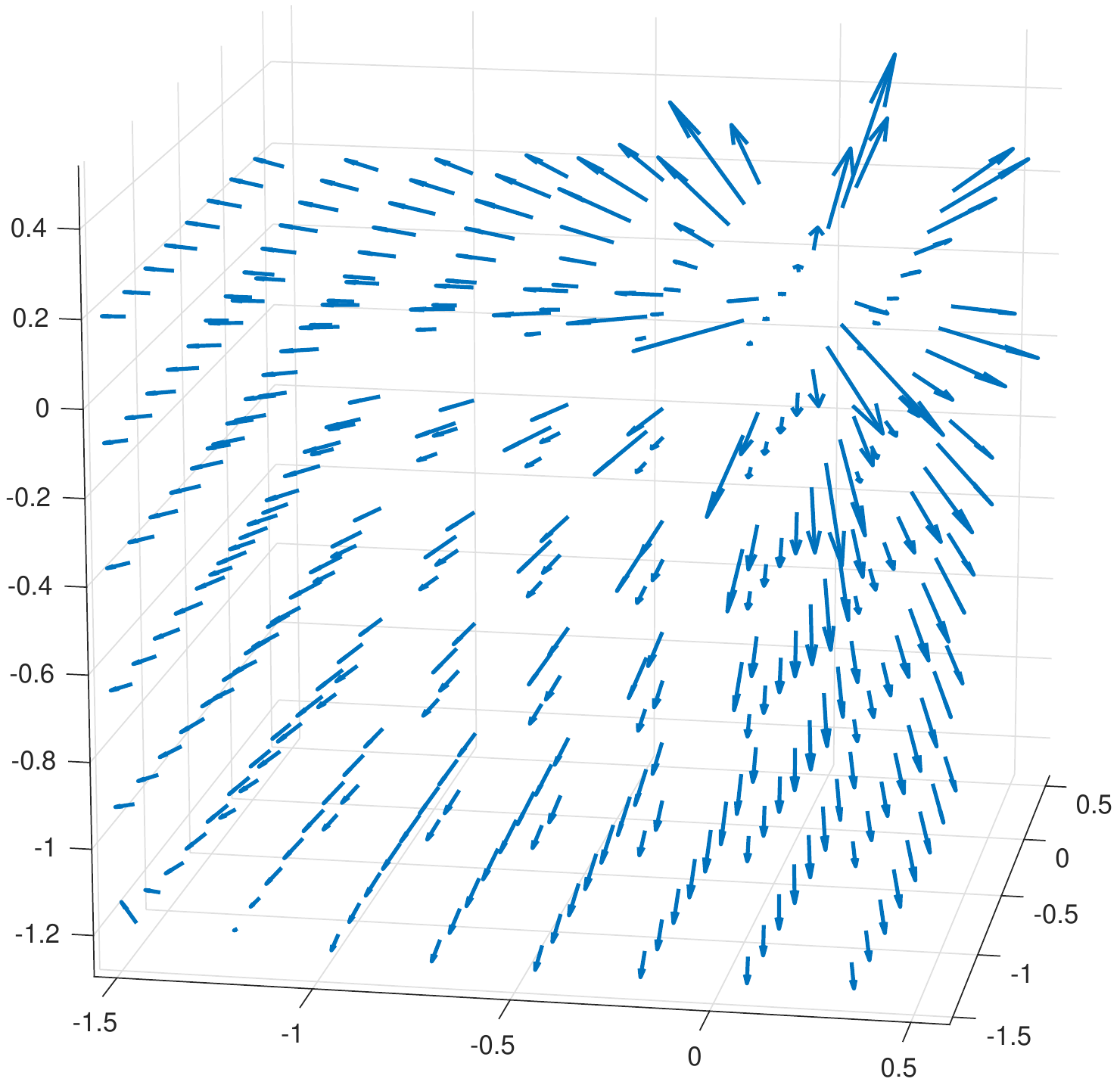}
      \caption{\label{fig:ex4-uwg}}
\end{subfigure}
\;
\begin{subfigure}[b]{0.35\linewidth}
      \centering
      \includegraphics[width=0.9\textwidth]{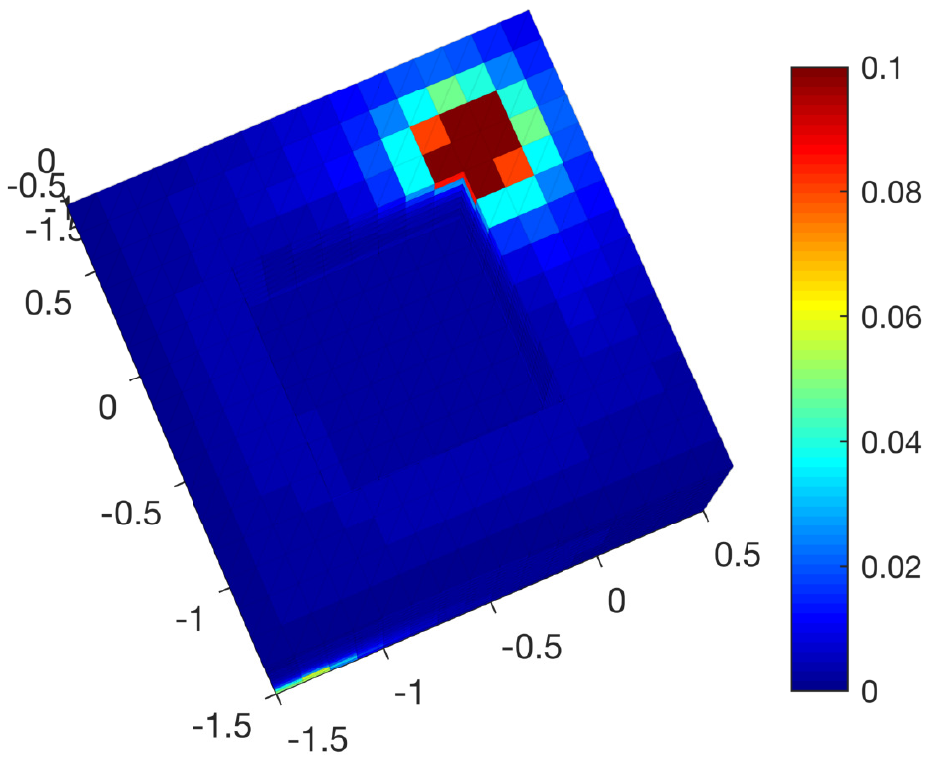}
      \caption{\label{fig:ex4-error}}
\end{subfigure}
\caption{Example \ref{subsec:ex4}: The vector field $\mathcalQ_h\boldsymbol{u}$ is shown in
(\subref{fig:ex4-Qu}) for
Example \ref{subsec:ex4} versus the PDWG approximation (\subref{fig:ex4-uwg}). The
vector fields are plotted on several $z=c$ planes. The error distribution of
$\|\varepsilon^{1/2} \mathbf{e}_{\mathcalQ_h\boldsymbol{u}}\|_{T}$ is plotted in
(\subref{fig:ex4-error}) on the cut plane $z=0$ with meshsize $h=1/8$. }
\label{fig:ex4}
\end{figure}

\begin{table}[htbp]\caption{Errors and corresponding rates of convergence for
Example 4.}
\label{table:ex4}
\centering
\begin{tabular}{| c |c | c | c | c | c | c |}
\hline
 $1/h$ &  $\|\varepsilon^{1/2} \mathbf{e}_{\mathcalQ_h\boldsymbol{u}}\|$ & rate &
          $\tnorm{(e_{\lambda}, \mathbf{e}_{\boldsymbol{q}})}$  & rate
          & $\tnorm{e_{s}}$ & rate
\\
\hline
2& 1.90-1   &  --   & 2.51e-1  &  --    & 2.04e-2 &   --
\\
4& 1.23e-1   & 0.63  & 1.93e-1  &  0.38  & 1.69e-2 &  0.27
\\
8& 7.78e-2   & 0.66  & 1.35e-1  &  0.51  & 1.24e-2 &  0.44
\\
16& 4.91e-2  & 0.66  & 9.03e-2  &  0.58  & 8.47e-3 &  0.55 
\\
\hline
\end{tabular}
\end{table}

\subsection{Example 5}
\label{subsec:ex5}
In this example we consider singular solutions in the
following vector potential form on a toroidal 
domain with 1 hole: $\Omega = \Big( (-1,\frac12)^2\backslash
[-\frac12,0]^2\times [0,\frac12] \Big)\times (0,\frac12)$
\[
\boldsymbol{u} = \nabla\times \left\langle 0, 0,  r^{\gamma}
\sin \big(\alpha \theta\big)\right\rangle,
\]
where $r$ and $\theta$ are the cylindrical coordinates defined as in Example \ref{subsec:ex3}. It can be verified that for $\gamma\neq 1$,
$\boldsymbol{u}\in \big(H^{\gamma-\epsilon}(\Omega)\big)^3$, and for $\gamma<1$,
this vector field is singular near a non-convex corner centered at $z$-axis (see Figure
\ref{fig:ex5-u}).

\begin{figure}[h]
  \centering
\begin{subfigure}[b]{0.4\linewidth}
    \centering
    \includegraphics[width=0.8\textwidth]{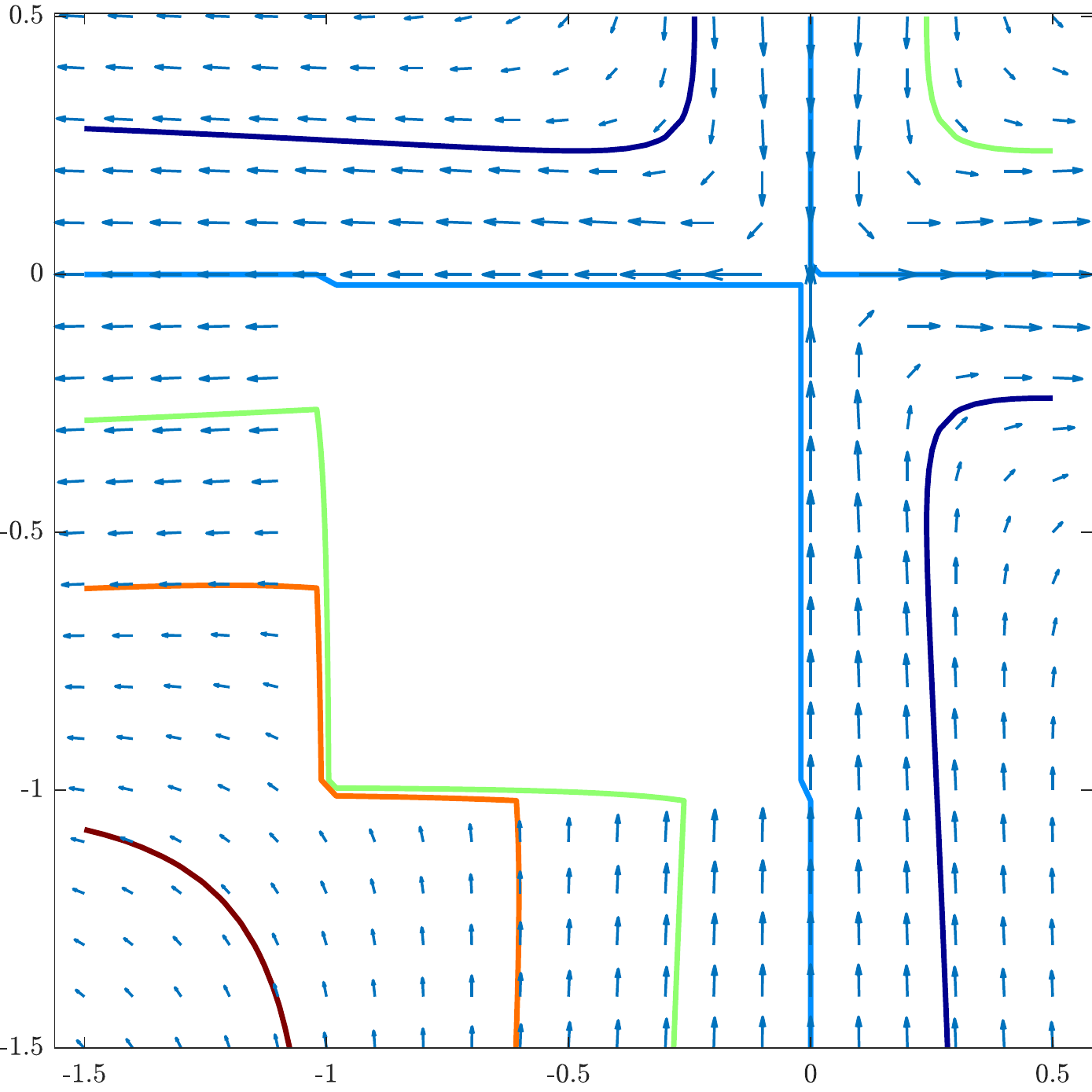}
    \caption{\label{fig:ex5-uexact}}
\end{subfigure}%
\hspace{0.3in}
\begin{subfigure}[b]{0.4\linewidth}
      \centering
      \includegraphics[width=0.8\textwidth]{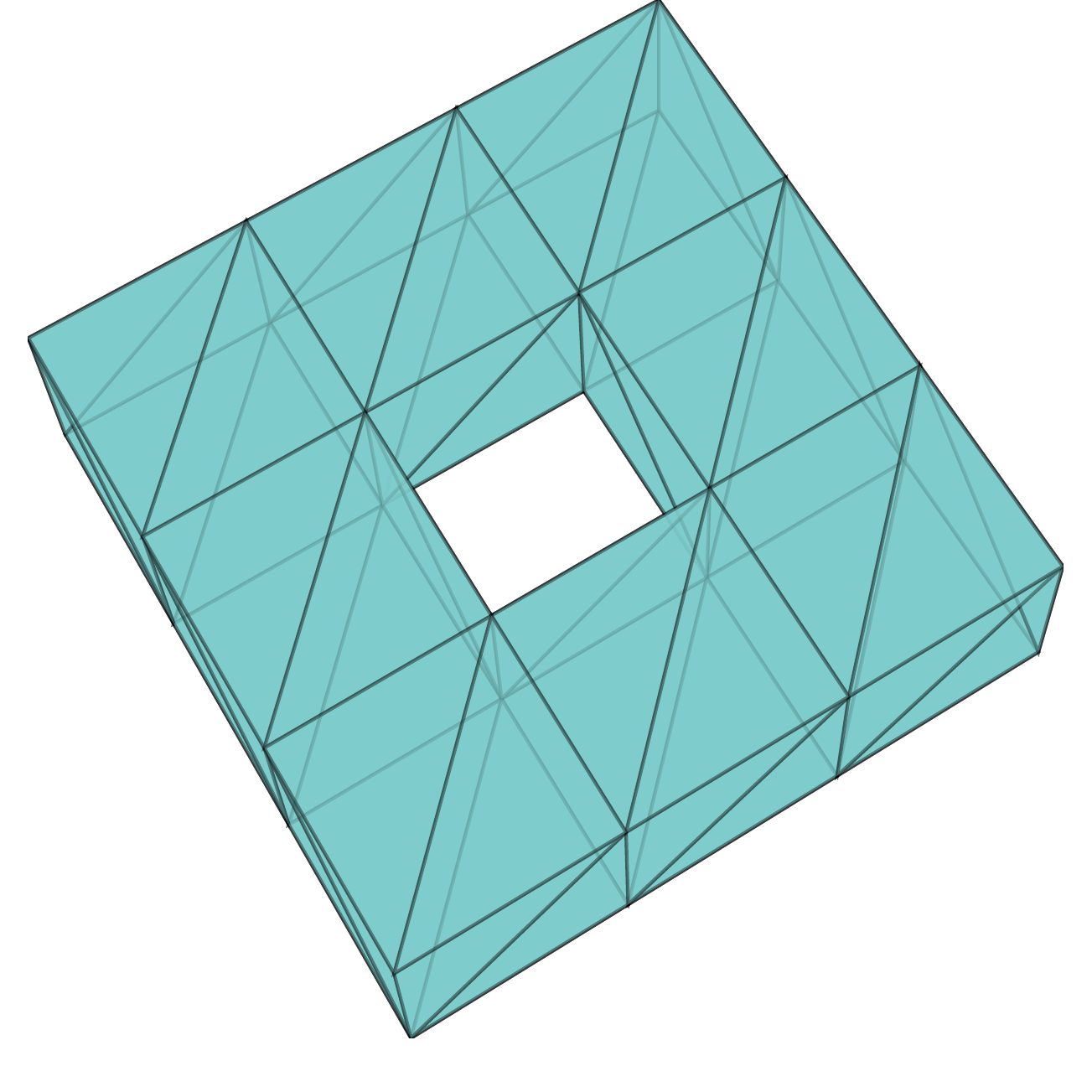}
      \caption{\label{fig:ex5-mesh}}
\end{subfigure}
\caption{Example \ref{subsec:ex5}: Plot of the singular true solution vector field with
$\gamma=3/4$ and $\alpha=2$ is shown in (\subref{fig:ex5-uexact}),  view from above on
$z=0$ plane. The level set contours are for the $z$-component of the vector
potential. The coarse mesh ($h=1/2$) is shown in (\subref{fig:ex5-mesh}).}
\label{fig:ex5-u}
\end{figure}

Unlike Example \ref{subsec:ex3} in which $\gamma=\alpha$ was set on an L-shaped domain, we choose $\alpha=2$ in this example, so that the resulting vector field is non-harmonic for $\gamma\neq \alpha$, and
\[
\nabla\times \boldsymbol{u} = \left\langle 0, 0,  -\Delta\left(r^{\gamma}
\sin \big(\alpha \theta\big)\right)\right\rangle =
 \left\langle 0, 0,
 \left(\alpha^{2}-\gamma^{2}\right)\left(x^{2}+y^{2}\right)^{\gamma / 2-1} \sin
\left(\alpha \theta\right)\right\rangle.
\]
Consequently, $\nabla\times \boldsymbol{u}\notin L^2(\Omega)$ if $\gamma\leq 1$.
Nevertheless, due to the unique nature of the PDWG method, we still obtain noteworthy
convergence result for the case of $\gamma\leq 1$. In Table \ref{table:ex5},
we have compiled several cases ranging from smooth to singular and plotted the PDWG
approximation vs $\mathcalQ_h\boldsymbol{u}$ in Figure \ref{fig:ex5}.

\begin{itemize}
\item {\em Regular case $\gamma =
1.25$:} \ $\mathbf{e}_{\boldsymbol{u}}$ and $e_{s}$ show the optimal rates of convergence at $O(h)$, while $\mathbf{e}_{\mathcalQ_h\boldsymbol{u}}$ shows a superconvergence.
$(e_{\lambda}, \mathbf{e}_{\boldsymbol{q}})$ only shows a slightly suboptimal
rate of convergence during first two refinements, and optimal thereafter.

\item {\em Singular case $\gamma = 1$:}\  In this case, we have $\boldsymbol{u}\in
H^{1-\epsilon}_{loc}(\Omega)$ and $\boldsymbol{u}\notin
\boldsymbol{H}(\mathbf{curl})$.
$\mathbf{e}_{\boldsymbol{u}}$ shows a rate of
convergence at $O(h^{0.9})$ asymptotically. Like the smooth case,
$\mathbf{e}_{\mathcalQ_h\boldsymbol{u}}$ shows a superconvergence with rates higher than $1$. $(e_{\lambda}, \mathbf{e}_{\boldsymbol{q}})$ and $e_{s}$ show an optimal rate of convergence asymptotically.

\item {\em Singular case $\gamma = 2/3$:} \ In this case, one has $\boldsymbol{u}\in
H^{2/3-\epsilon}(\Omega)$, and $\boldsymbol{u}\notin \boldsymbol{H}(\mathbf{curl})$.
Both $\mathbf{e}_{\boldsymbol{u}}$ and $e_{s}$ 
show optimal rate of convergence at $O(h^{2/3})$. Similarly to previous two
cases, $\mathbf{e}_{\mathcalQ_h\boldsymbol{u}}$ exhibits superconvergence with a rate of 
$O(h^{0.9})$. $(e_{\lambda}, \mathbf{e}_{\boldsymbol{q}})$ shows a convergence with a slightly suboptimal rate of $O(h^{0.6})$.

\end{itemize}

\begin{table}[htb]\caption{Errors and the rates of convergence with different
$\gamma$ for Example 5}
\label{table:ex5}
\centering
\begin{tabular}{| c |c |c | c | c | c | c | c |c | c|}
        \hline
 & $1/h$ & $\|\mathbf{e}_{\boldsymbol{u}}\|$  & rate
         &  $\|\varepsilon^{1/2} \mathbf{e}_{\mathcalQ_h\boldsymbol{u}}\|$ & rate
         &$\tnorm{(e_{\lambda}, \mathbf{e}_{\boldsymbol{q}})}$  & rate
          & $\tnorm{e_{s}}$ & rate
\\
       \hline
             &2& 3.96e-1 &  -- & 1.62e-1 &  -- & 9.01e-1 &  -- & 8.23e-2 &  --
 \\
$\gamma=5/4$ &4& 2.09e-1 & 0.92 & 7.69e-2 & 1.07 & 4.94e-1 & 0.87 & 5.12e-2 & 0.68
\\
 (smooth)    &8& 1.07e-1 & 0.96 & 3.23e-2 & 1.25 & 2.63e-1 & 0.91 & 2.70e-2 & 0.93
\\
            &16& 5.44e-2 & 0.98 & 1.26e-3 & 1.35 & 1.37e-1 & 0.94 & 1.32e-2 & 1.03
\\
\hline
 & $1/h$ & $\|\mathbf{e}_{\boldsymbol{u}}\|$  & rate
         &  $\|\varepsilon^{1/2} \mathbf{e}_{\mathcalQ_h\boldsymbol{u}}\|$ & rate
         &$\tnorm{(e_{\lambda}, \mathbf{e}_{\boldsymbol{q}})}$  & rate
          & $\tnorm{e_{s}}$ & rate
\\
 \hline
            &2& 5.34e-1 & --  &2.46e-1 &  --  & 1.20e0 &  --  & 1.42e-1 &  --
 \\
$\gamma=1$  &4& 3.06e-1 & 0.80 & 1.28e-1 & 0.94 & 7.11e-1 & 0.76 & 8.95e-2 & 0.66
\\
(singular)  &8& 1.67e-1 & 0.87 & 5.58e-2 & 1.19 & 4.04e-1 & 0.81 & 4.94e-2 & 0.86
\\
           &16& 8.82e-2 & 0.88 & 2.55e-2 & 1.13 & 2.25e-1 & 0.85 & 2.56e-2 & 0.95
  \\
 \hline

  & $1/h$ & $\|\mathbf{e}_{\boldsymbol{u}}\|$  & rate
          &  $\|\varepsilon^{1/2} \mathbf{e}_{\mathcalQ_h\boldsymbol{u}}\|$ & rate
          &$\tnorm{(e_{\lambda}, \mathbf{e}_{\boldsymbol{q}})}$  & rate
           & $\tnorm{e_{s}}$ & rate
 \\
  \hline
            &2& 8.87e-1 & --   & 4.55e-1 &  --  & 2.05e0 &  --   & 3.41e-1 &  --
 \\
$\gamma=2/3$&4& 5.87e-1 & 0.59 & 2.75e-1 & 0.73 & 1.40e0  & 0.55 & 2.39e-1 & 0.51
\\
(singular)  &8& 3.70e-1 & 0.67 & 1.39e-1 & 0.98 & 9.28e-1 & 0.60 & 1.53e-1 & 0.65
\\
           &16& 2.34e-1 & 0.66 & 7.51e-2 & 0.89 & 6.01e-1 & 0.62 & 9.51e-2 & 0.68
  \\
 \hline
 \end{tabular}
\end{table}

\begin{figure}[h]
  \centering
\begin{subfigure}[b]{0.3\linewidth}
    \centering
    \includegraphics[width=0.9\textwidth]{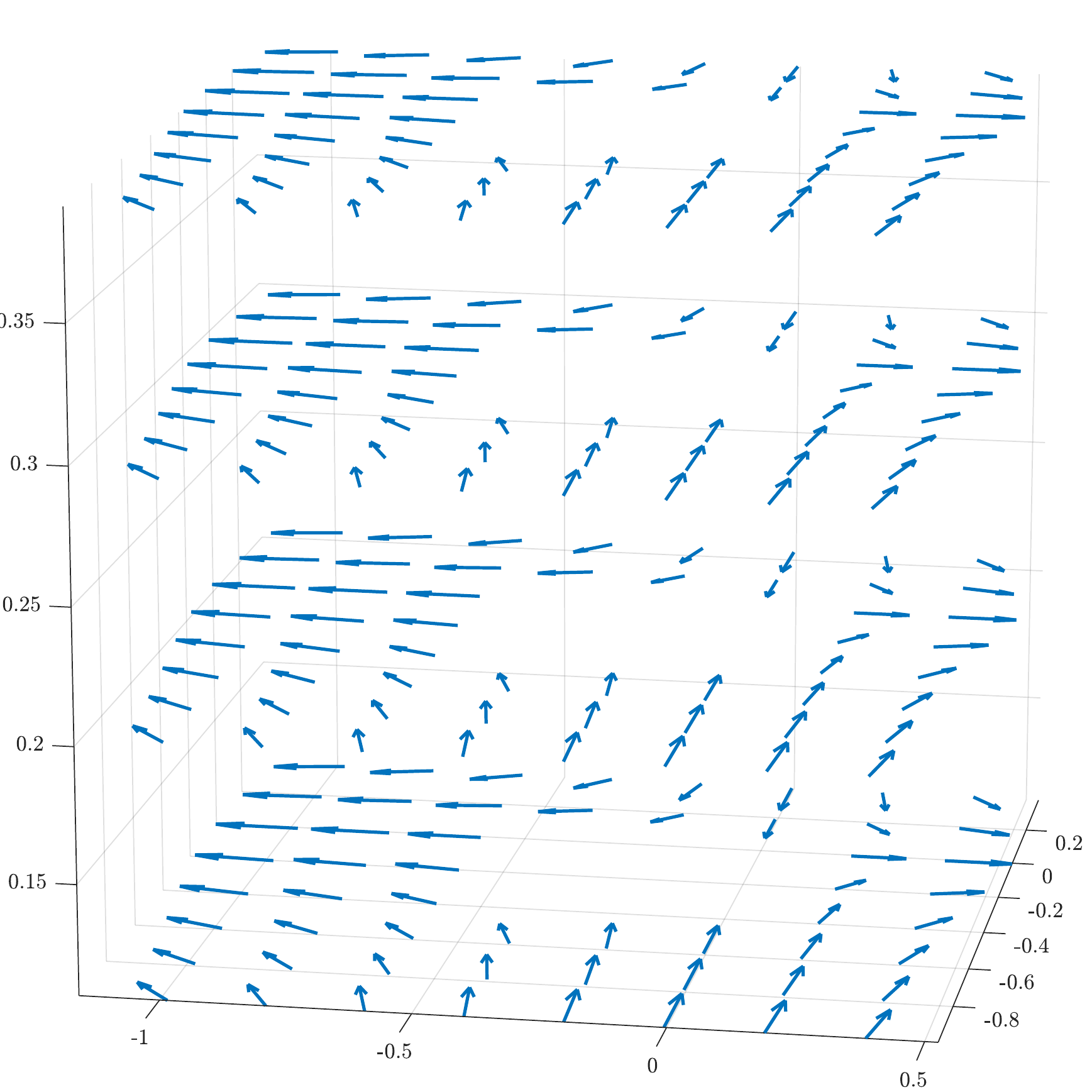}
    \caption{\label{fig:ex5-Qu}}
\end{subfigure}%
\;
\begin{subfigure}[b]{0.3\linewidth}
      \centering
      \includegraphics[width=0.9\textwidth]{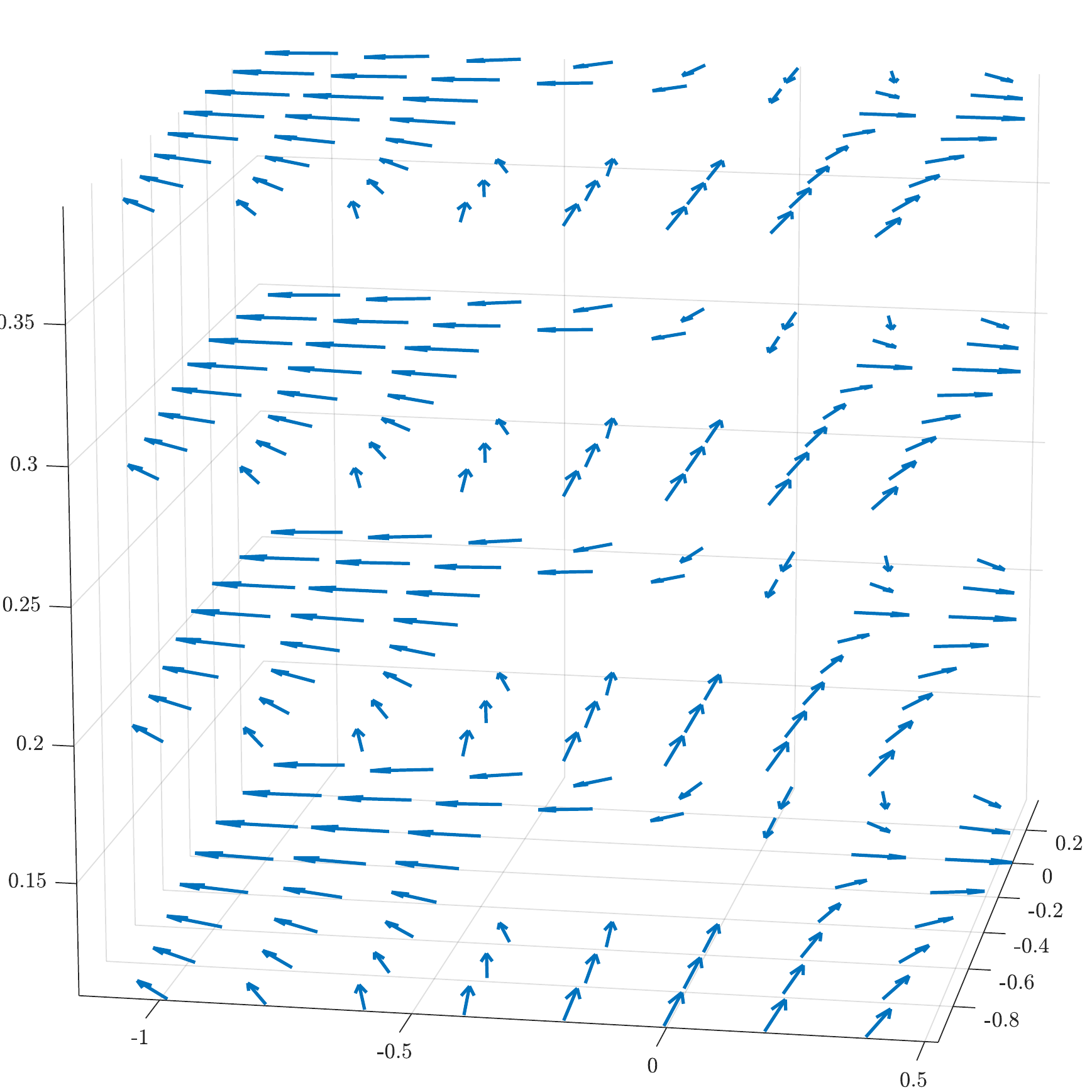}
      \caption{\label{fig:ex5-uwg}}
\end{subfigure}
\;
\begin{subfigure}[b]{0.35\linewidth}
      \centering
      \includegraphics[width=0.9\textwidth]{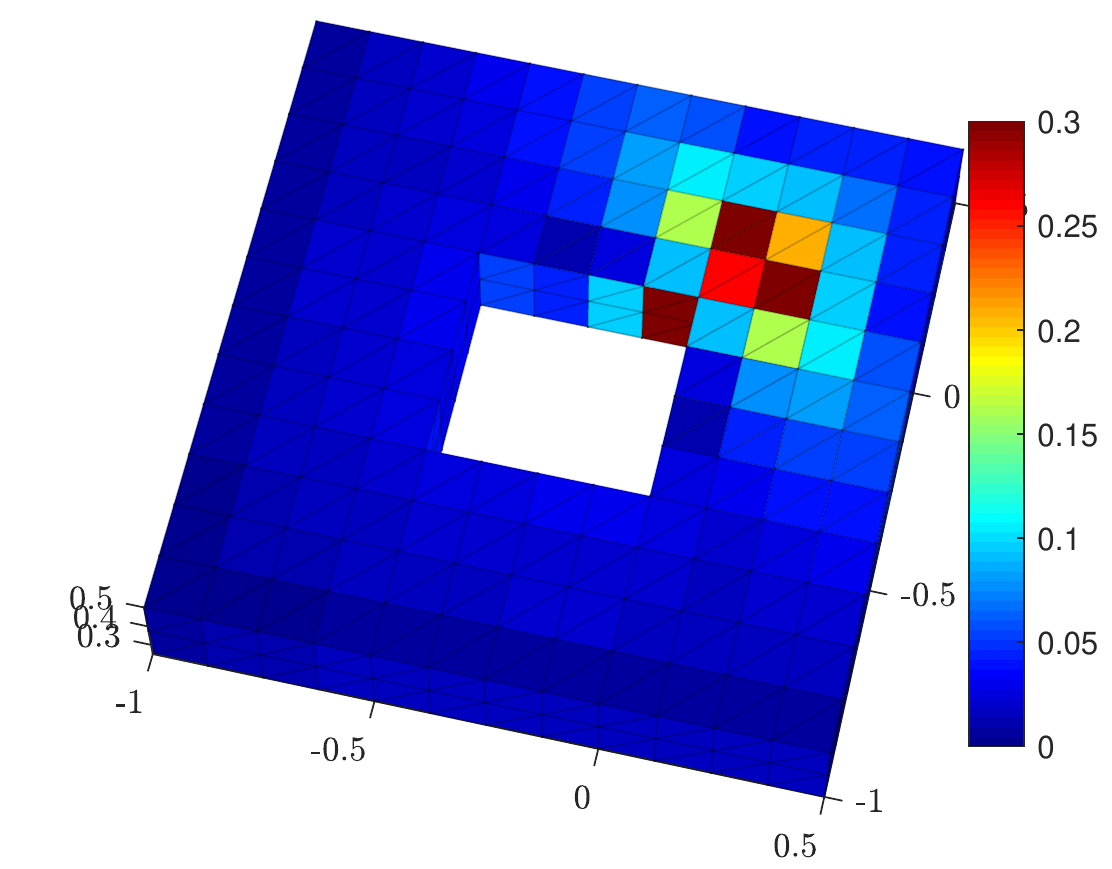}
      \caption{\label{fig:ex5-error}}
\end{subfigure}
\caption{Example \ref{subsec:ex5}: The vector field of $\mathcalQ_h\boldsymbol{u}$ is shown in
(\subref{fig:ex5-Qu}) versus the PDWG approximation (\subref{fig:ex5-uwg}) for
$\gamma=2/3$ (singular case). The vector fields are plotted on several $z=c$ planes.
The distribution of $\|\varepsilon^{1/2} \mathbf{e}_{\boldsymbol{u}}\|_{T}$ locally
is plotted in (\subref{fig:ex5-error}) on the cut plane $z=1/4$ with meshsize $h=1/8$. }
\label{fig:ex5}
\end{figure}

\subsection{Example 6}
\label{subsec:ex6}
In this example we consider a singular solution bearing the same form with that in
Example \ref{subsec:ex5} on a
toroidal domain with 2 holes: $\Omega = \Big[ (-1,\frac32)^2\backslash
\big\{[-\frac12,0]^2\times [0,\frac12]\cup  [\frac12,1]\times[-\frac12,0]\big\}
\Big] \times
[0,\frac12] $
\[
\boldsymbol{u} = \nabla\times \left\langle 0, 0,  r_1^{\gamma_1}
\sin \big(\alpha \theta_1\big) + r_2^{\gamma_2}
\sin \big(\alpha \theta_2\big)\right\rangle,
\]
where $(r_i, \theta_i)$ are the cylindrical coordinates centered at a nonconvex
corner of the $i$-th hole, i.e., $r_1 = \sqrt{x^2+y^2}$, $r_2 =
\sqrt{(x-1)^2+y^2}$, $\theta_1 = \arctan(y/x)+c_1$, and $\theta_2 =
\arctan\big(y/(x-1)\big)+c_2$. In this example, we choose $\gamma_1=1/2$ and
$\gamma_2=2/3$ such
that the vector field is singular near the nonconvex corners of both holes (see
Figure \ref{fig:ex6-u}). In fact, the vector field $\boldsymbol{u}$ behaves as
$H^{1/2-\epsilon}$-regular in a neighborhood of the edge
$\{x=0,y=0\}$, and as $H^{2/3-\epsilon}$-regular in a neighborhood of the edge
$\{x=1,y=0\}$. In this example, one has $\boldsymbol{u}\notin \boldsymbol{H}(\mathbf{curl})$
as in example \ref{subsec:ex5}. The convergence results are shown in Table
\ref{table:ex6}. It can be seen that
$\mathbf{e}_{\boldsymbol{u}}$, $(e_{\lambda}, \mathbf{e}_{\boldsymbol{q}})$,
and $e_{s}$ show optimal rates of convergence at $O(h^{1/2})$. 
The local error is more
prominent near the nonconvex corner of the first hole where $\boldsymbol{u}$ locally
is $H^{1/2-\epsilon}$-regular (see Figure \ref{fig:ex6}). Similarly to previous two
cases, $\mathbf{e}_{\mathcalQ_h\boldsymbol{u}}$ shows superconvergence with a rate approximately at $O(h^{3/4})$.

\begin{figure}[h]
  \centering
\begin{subfigure}[b]{0.6\linewidth}
    \centering
    \includegraphics[width=0.9\textwidth]{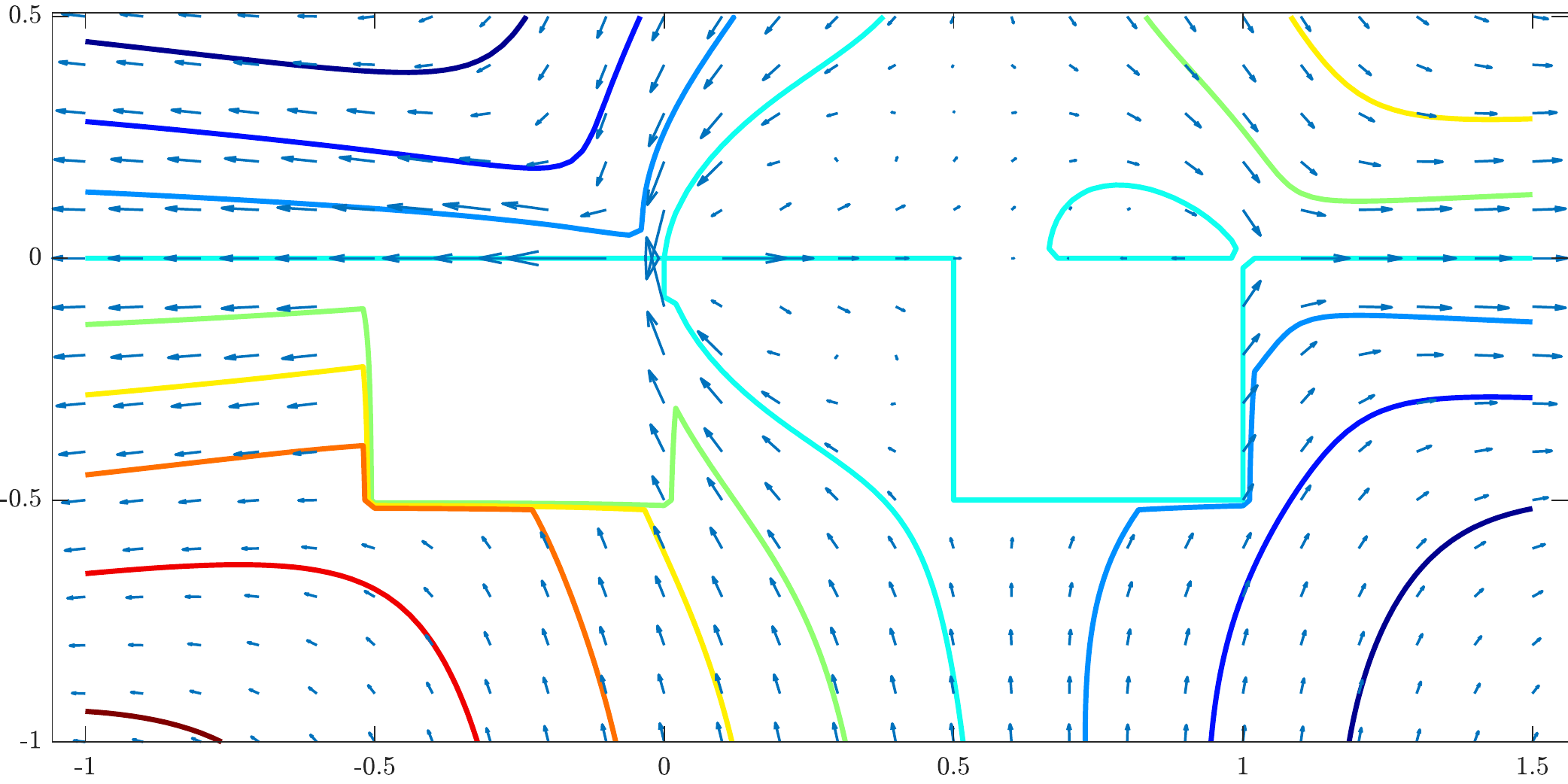}
    \caption{\label{fig:ex6-uexact}}
\end{subfigure}%
\quad
\begin{subfigure}[b]{0.35\linewidth}
      \centering
      \includegraphics[width=0.9\textwidth]{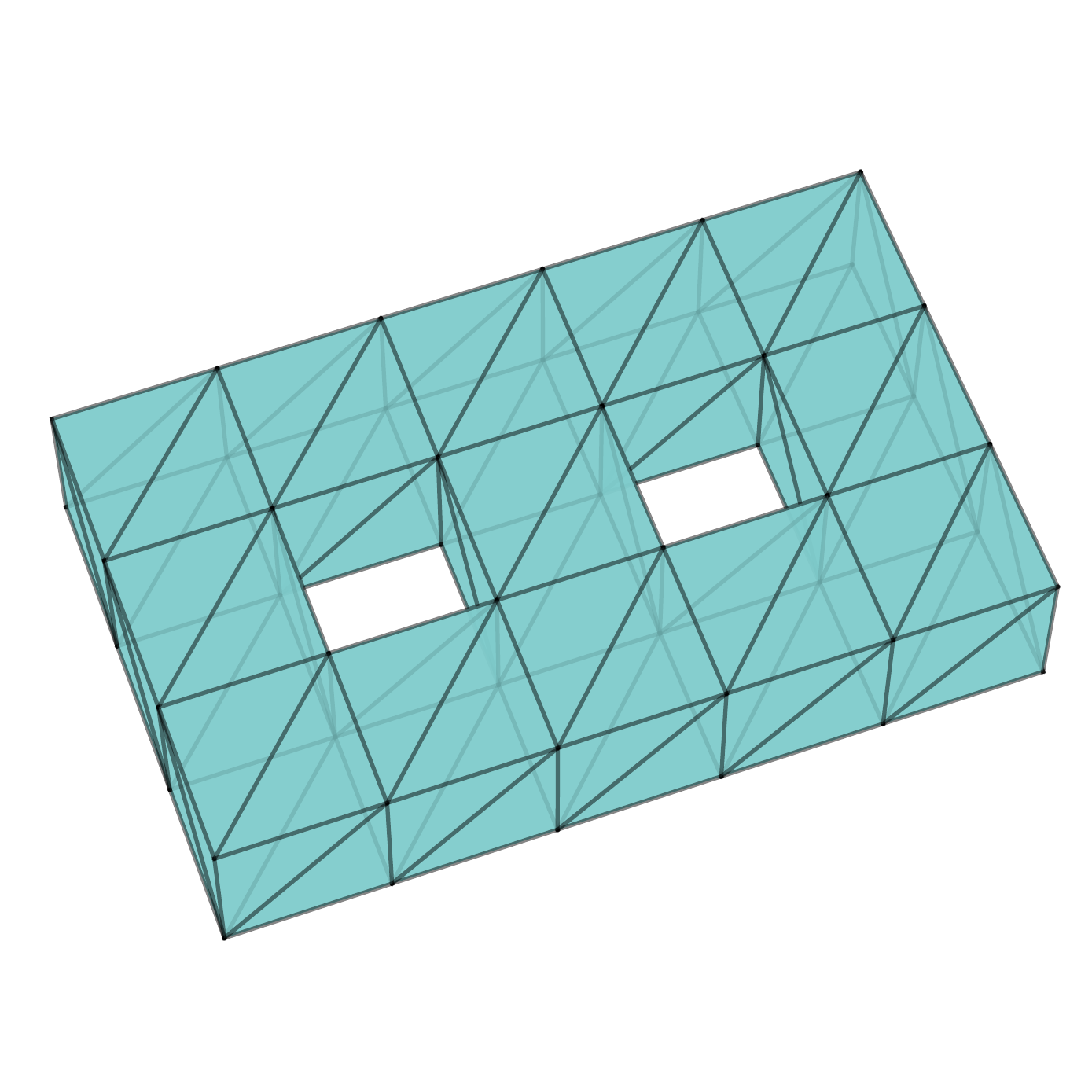}
      \caption{\label{fig:ex6-mesh}}
\end{subfigure}
\caption{Example \ref{subsec:ex6}: the singular true solution vector field when
$\gamma_1=1/2$, $\gamma_2 = 2/3$, and $\alpha=2$ is shown in
(\subref{fig:ex6-uexact}) view from above on
$z=0$ plane. The level set contours are for the $z$-component of the vector
potential used. The coarse mesh ($h=1/2$) is illustrated in (\subref{fig:ex6-mesh}).}
\label{fig:ex6-u}
\end{figure}

\begin{table}[htb]\caption{Errors and the rates of convergence  for Example 6}
\label{table:ex6}
\centering
\begin{tabular}{| c |c |c | c | c | c | c | c |c | c|}
        \hline
 & $1/h$ & $\|\mathbf{e}_{\boldsymbol{u}}\|$  & rate
         &  $\|\varepsilon^{1/2} \mathbf{e}_{\mathcalQ_h\boldsymbol{u}}\|$ & rate
         &$\tnorm{(e_{\lambda}, \mathbf{e}_{\boldsymbol{q}})}$  & rate
          & $\tnorm{e_{s}}$ & rate
\\
       \hline
               &2& 1.49e0  &  --  & 8.37e-1 &  --  & 3.39e0 &  -- & 6.38e-1 &  --
 \\
$\gamma_1=1/2$ &4& 1.04e0  & 0.52 & 5.18e-1 & 0.69 & 2.48e0 & 0.45 & 4.74e-1 &0.43
\\
$\gamma_2=2/3$ &8& 6.99e-1 & 0.57 & 2.84e-1 & 0.87 & 1.77e0 & 0.49 & 3.27e-1 &0.54
\\
              &16& 4.79e-1 & 0.55 & 1.70e-1 & 0.73 & 1.24e0 & 0.51 & 2.22e-1 & 0.55
\\
\hline
 \end{tabular}
\end{table}

\begin{figure}[h]
  \centering
\begin{subfigure}[b]{0.4\linewidth}
    \centering
    \includegraphics[width=0.9\textwidth]{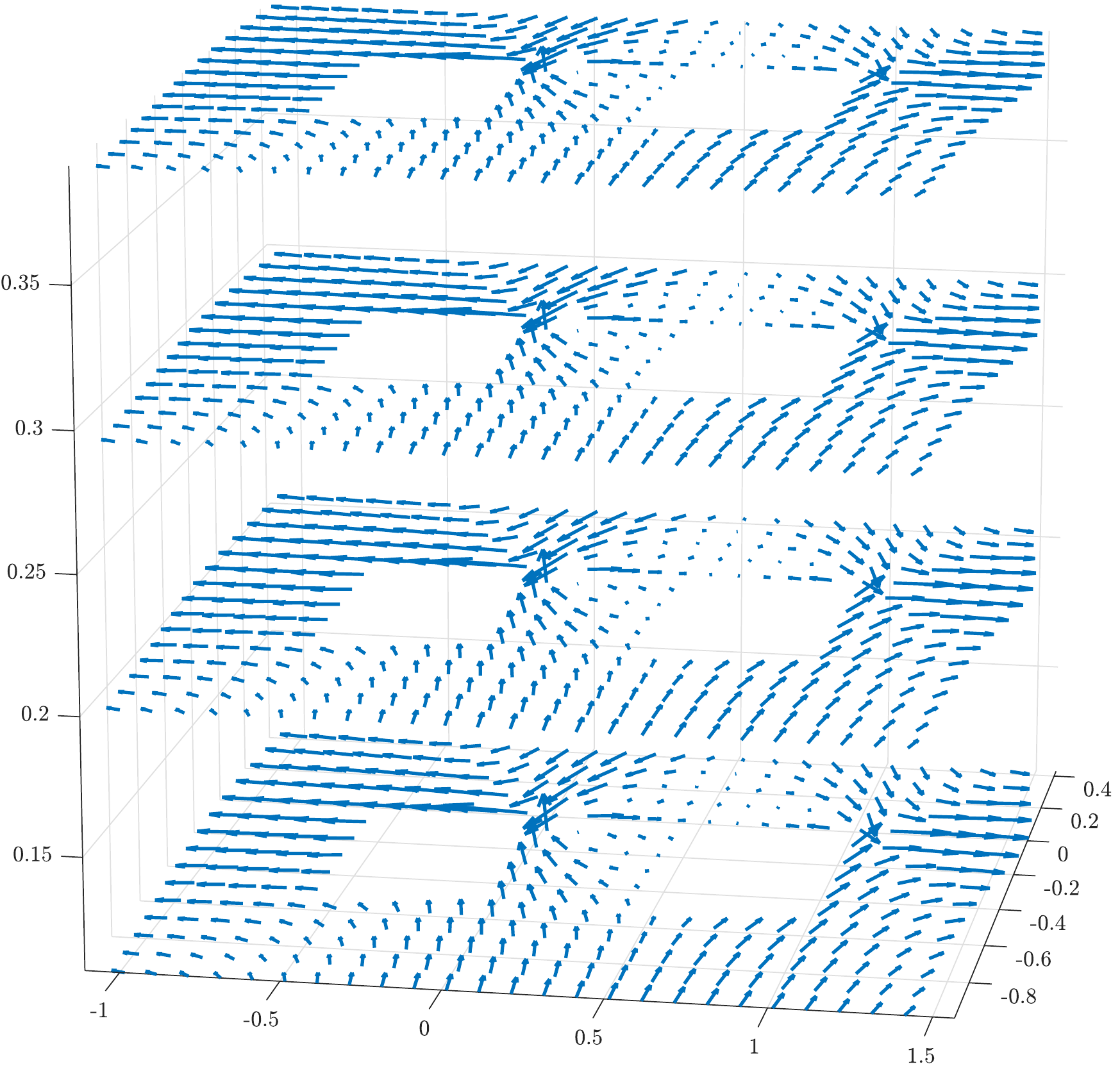}
    \caption{\label{fig:ex6-Qu}}
\end{subfigure}%
\quad
\begin{subfigure}[b]{0.4\linewidth}
      \centering
      \includegraphics[width=0.9\textwidth]{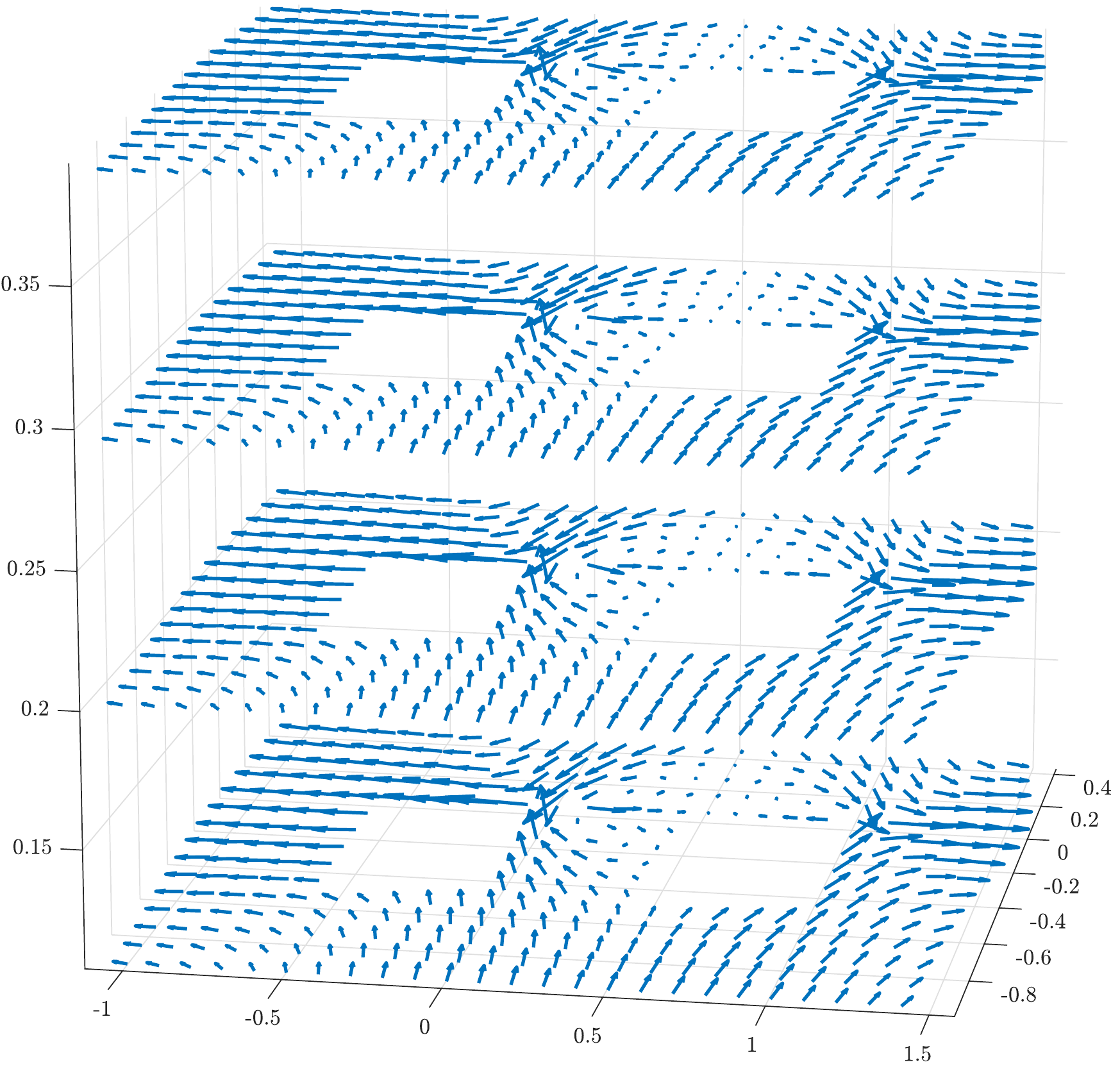}
      \caption{\label{fig:ex6-uwg}}
\end{subfigure}
\quad
\begin{subfigure}[b]{0.7\linewidth}
      \centering
      \includegraphics[width=0.9\textwidth]{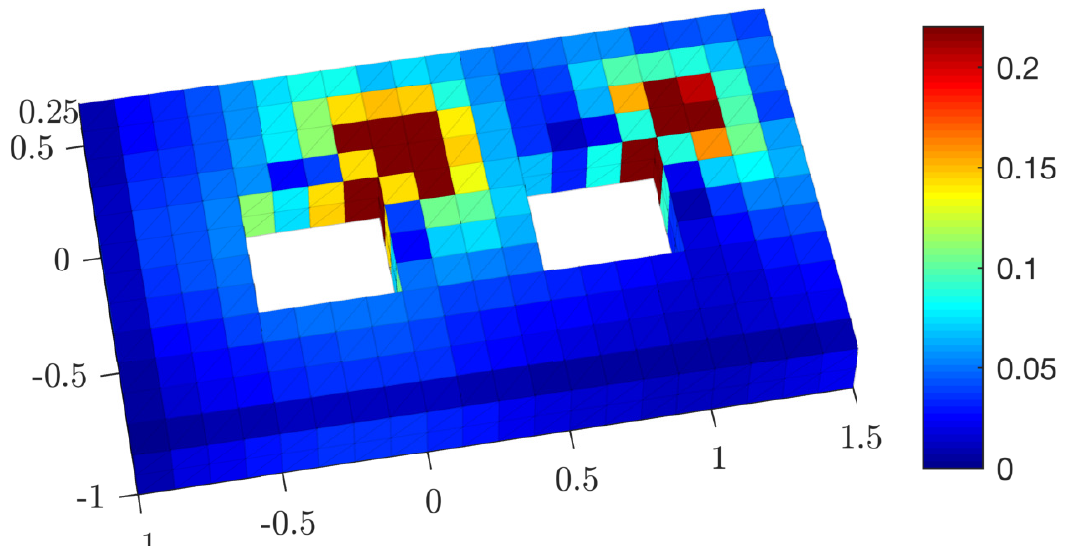}
      \caption{\label{fig:ex6-error}}
\end{subfigure}
\caption{Example \ref{subsec:ex6}: The vector field of $\mathcalQ_h\boldsymbol{u}$ is shown in
(\subref{fig:ex6-Qu}) versus the PDWG approximation (\subref{fig:ex6-uwg}) for
$\gamma_1=1/2$  and $\gamma_2 = 2/3$ (singular case). The vector fields are plotted
on several $z=c$ planes. The distribution of $\|\varepsilon^{1/2}
\mathbf{e}_{\boldsymbol{u}}\|_{T}$ locally
is plotted in (\subref{fig:ex6-error}) on the cut plane $z=1/4$ with meshsize $h=1/8$. }
\label{fig:ex6}
\end{figure}

\subsection{Example 7}
\label{subsec:ex7}
In this example, we report some computational results for a test problem on a toroidal domain with 1 hole. The numerical data does not show a convergence due to the presense of a harmonic vector field, according to Theorem \ref{THM:ErrorEstimate4uh}. The true solution is obtained by combining the ones used in Examples \ref{subsec:ex1} and \ref{subsec:ex5}:
\[
\boldsymbol{u}(x, y, z) =\nabla\times \left\langle 0, 0,  r^{\gamma}
\sin \big(\alpha \theta\big)\right\rangle+
\beta \left(\begin{array}{c}
\sin(\pi x)\cos(\pi y)
\\
-\sin(\pi y)\cos(\pi x)
\\
0
\end{array}
\right).
\]
In this test, we choose $\alpha=2$, $\gamma=2/3$, such that $\boldsymbol{u}\in\boldsymbol{H}^{2/3-\epsilon}$, while the extra term with coefficient $\beta$ is smooth thus not affecting the regularity of the solution. Observe that the extra $\beta$ term is not divergence free. The optimal rate of convergence should be of order $O(h^{2/3})$ on simply-connected domains unaffected by the $\beta$ term. As $\beta$ varies, the numerical results do not demonstrate any convergence for the vector field $\boldsymbol{u}$, while optimal order of convergence are seen for both $(e_{\lambda}, \mathbf{e}_{\boldsymbol{q}})$ and $e_{s}$. The numerical performance is in consistency with our theory as established in Theorem \ref{THM:ErrorEstimate4Triple} for the convergence of $(e_{\lambda}, \mathbf{e}_{\boldsymbol{q}})$ and $e_{s}$ and Theorem
\ref{THM:ErrorEstimate4uh} for the convergence of the vector field $\boldsymbol{u}_h$ up to a harmonic field. The vector fields of $\mathcalQ_h\boldsymbol{u}$ and $\boldsymbol{u}_h$ are plotted in Figure \ref{fig:ex7} (see (a) and (b)), while their difference $\boldeta_h=\mathcalQ_h\boldsymbol{u}-\boldsymbol{u}_h$ is plotted in the same figure as (c). According to Theorem \ref{THM:ErrorEstimate4uh}, the vector field $\boldeta_h$ is an approximate harmonic field with normal boundary condition.

\begin{table}[htb]\caption{Errors and the rates of convergence with different
$\beta$ for Example 7}
\label{table:ex5}
\centering
\begin{tabular}{| c |c |c | c | c | c | c | c |c | c|}
        \hline
 & $1/h$ & $\|\mathbf{e}_{\boldsymbol{u}}\|$  & rate
         &  $\|\varepsilon^{1/2} \mathbf{e}_{\mathcalQ_h\boldsymbol{u}}\|$ & rate
         &$\tnorm{(e_{\lambda}, \mathbf{e}_{\boldsymbol{q}})}$  & rate
          & $\tnorm{e_{s}}$ & rate
\\
       \hline
             &2& 1.26e0 &  -- & 8.08e-1 &  -- & 2.52e0 &  -- & 3.68e-1 &  --
 \\
$\gamma=2/3$ &4& 8.78e-1 & 0.52 & 6.35e-1 & 0.35 & 1.63e0 & 0.63 & 2.54e-1 & 0.53
\\
$\beta=1$    &8& 6.71e-1 & 0.39 & 5.55e-1 & 0.19 & 1.03e0 & 0.66 & 1.58e-1 & 0.68
\\
            &16& 5.82e-1 & 0.21 & 5.33e-1 & 0.06 & 6.44e-1 & 0.67 & 9.72e-2 & 0.70
\\
\hline
 & $1/h$ & $\|\mathbf{e}_{\boldsymbol{u}}\|$  & rate
         &  $\|\varepsilon^{1/2} \mathbf{e}_{\mathcalQ_h\boldsymbol{u}}\|$ & rate
         &$\tnorm{(e_{\lambda}, \mathbf{e}_{\boldsymbol{q}})}$  & rate
          & $\tnorm{e_{s}}$ & rate
\\
 \hline
            &2& 3.42e0 & --     & 2.75e0 &  --  & 5.32e0 &  --  & 6.52e-1 &  --
 \\
$\gamma=2/3$  &4& 2.89e0 & 0.25 & 2.66e0 & 0.05 & 3.16e0 & 0.75 & 4.05e-1 & 0.69
\\
$\beta=5$    &8& 2.71e0 & 0.09 & 2.64e0 & 0.01 & 1.79e0 & 0.82 & 2.21e-1 & 0.87
\\
           &16& 2.66e0 & 0.03 & 2.64e0 & 0.00 & 1.01e0 & 0.82 & 1.27e-1 & 0.80
  \\
 \hline
 \end{tabular}
\end{table}

\begin{figure}[h]
  \centering
\begin{subfigure}[b]{0.4\linewidth}
    \centering
    \includegraphics[width=0.9\textwidth]{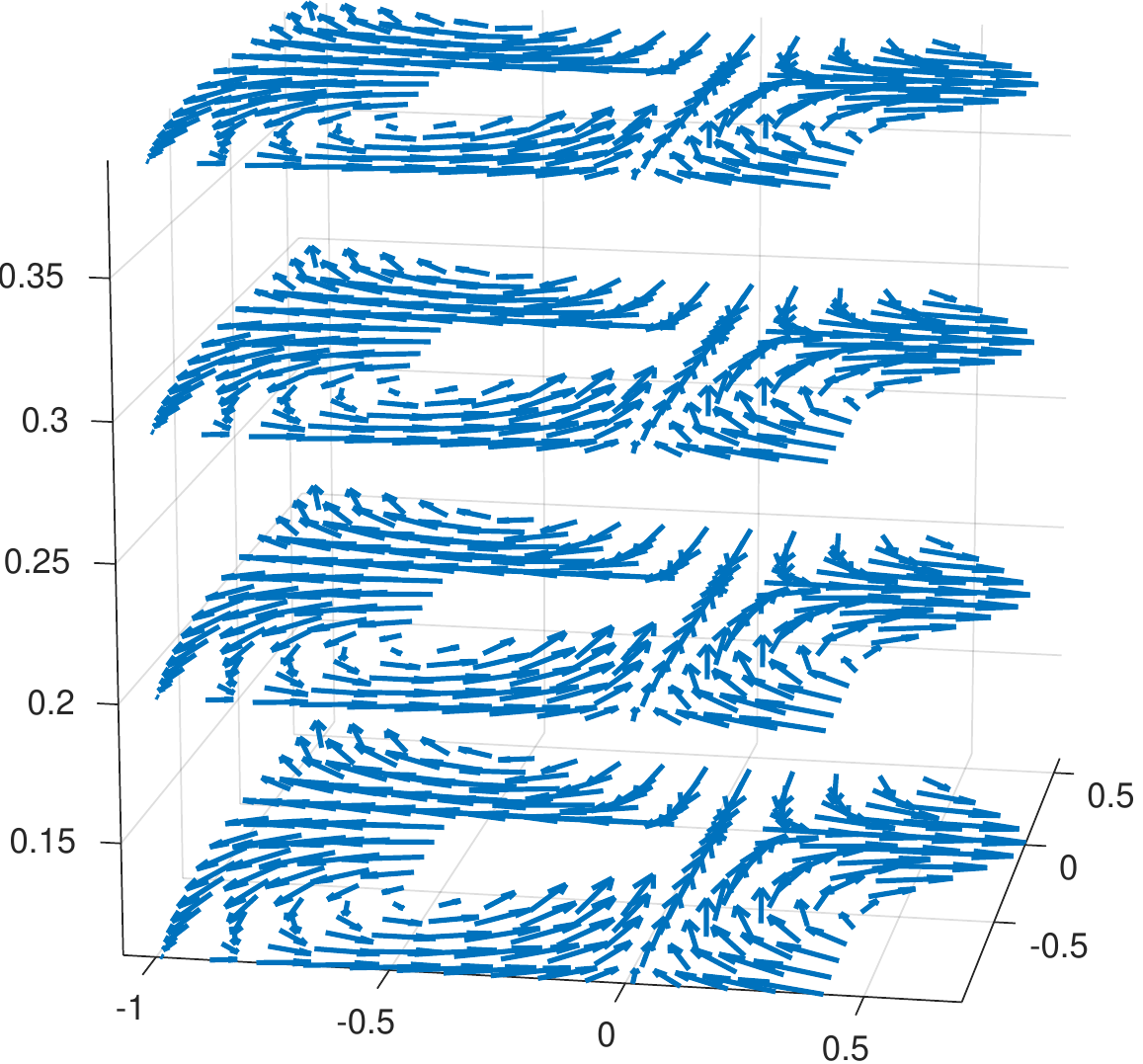}
    \caption{\label{fig:ex7-Qu}}
\end{subfigure}%
\quad
\begin{subfigure}[b]{0.4\linewidth}
      \centering
      \includegraphics[width=0.9\textwidth]{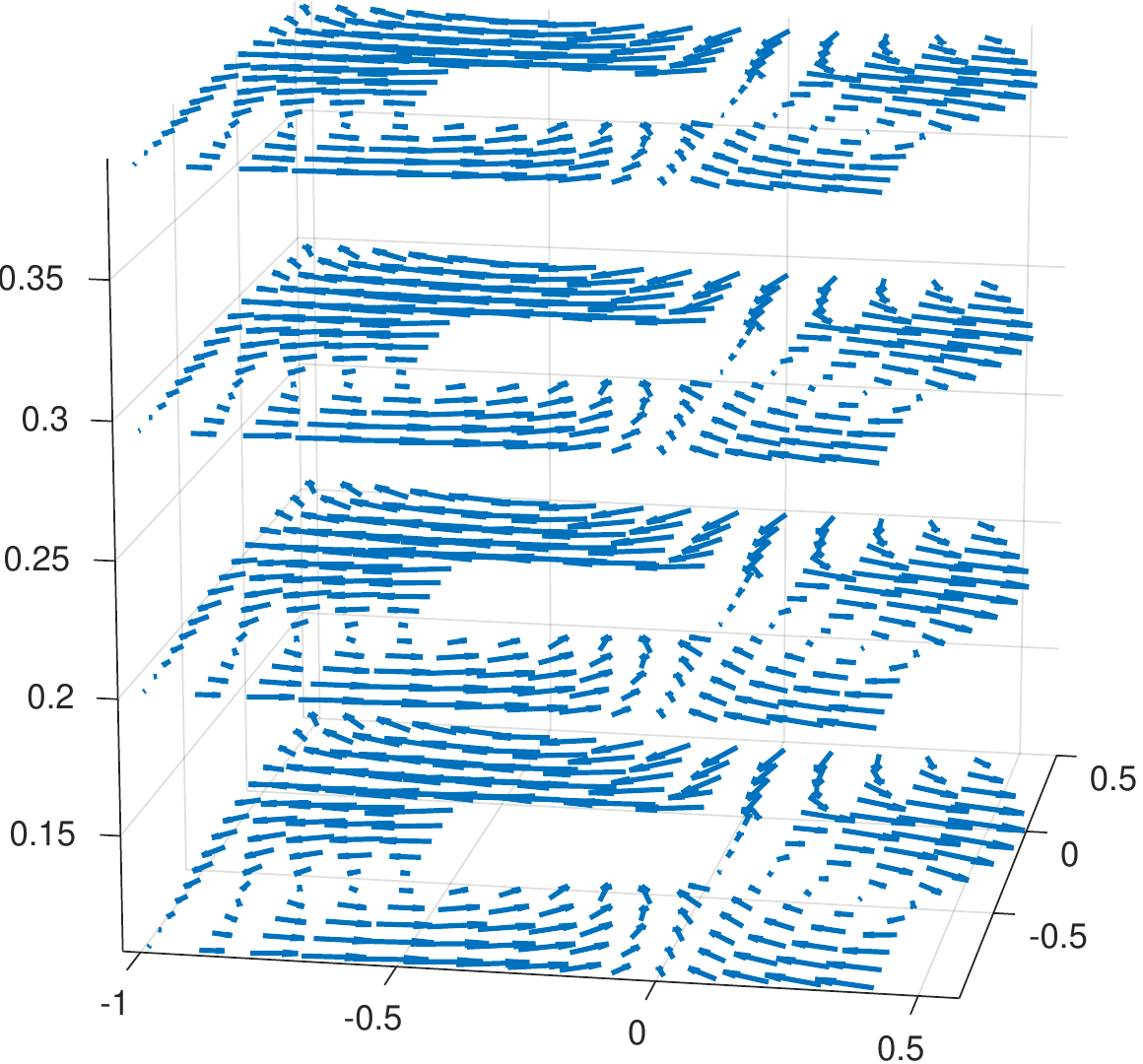}
      \caption{\label{fig:ex7-uwg}}
\end{subfigure}
\quad
\begin{subfigure}[b]{0.4\linewidth}
      \centering
      \includegraphics[width=0.9\textwidth]{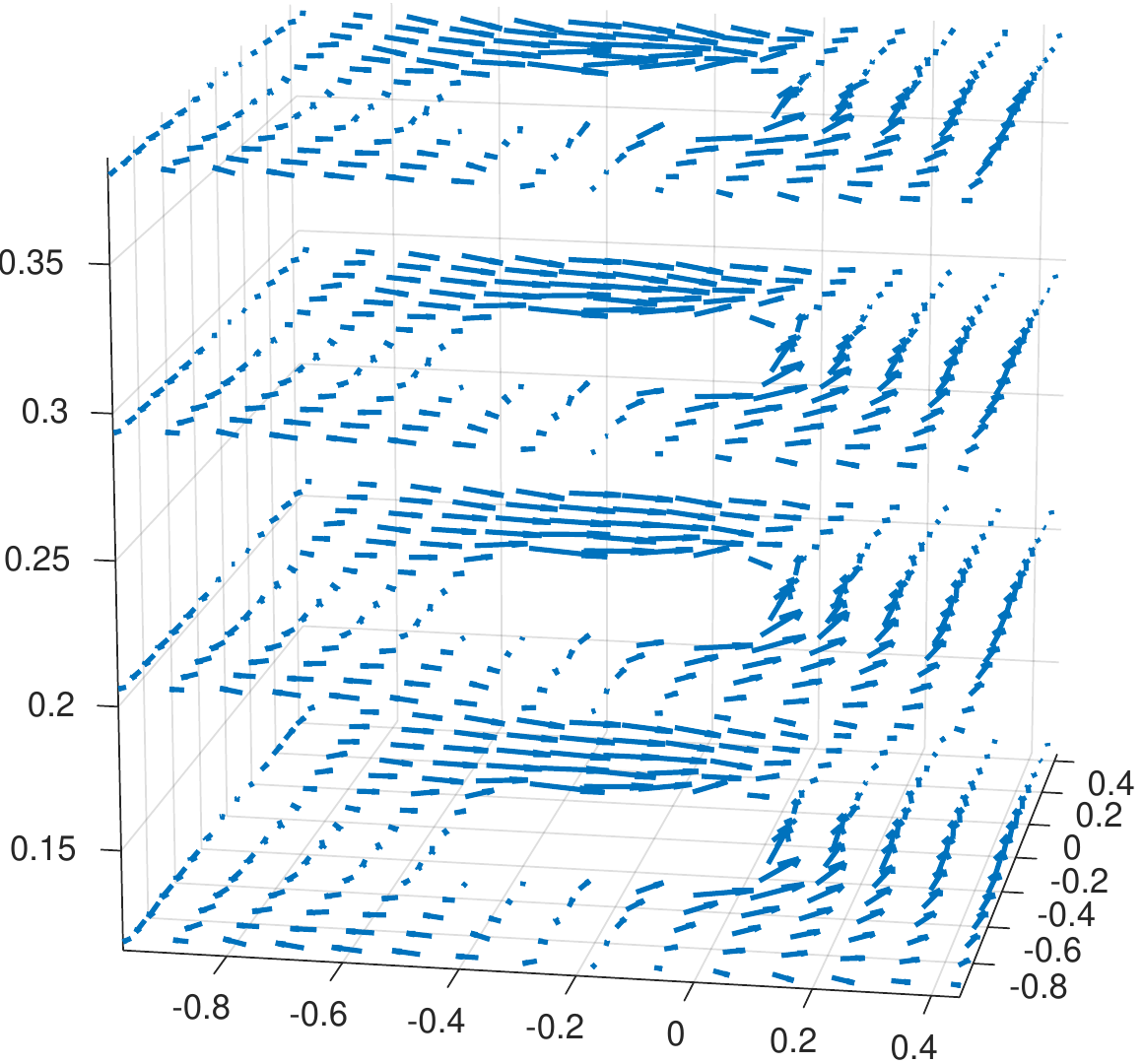}
      \caption{\label{fig:ex7-harmonic}}
\end{subfigure}
\caption{Example \ref{subsec:ex7}: The vector fields of $\mathcalQ_h\boldsymbol{u}$ in
(\subref{fig:ex7-Qu}) and that of the PDWG approximation (\subref{fig:ex7-uwg}) are visually different when $\boldsymbol{u}$ is not divergence-free on a toroidal domain. 
The discrete harmonic field $\boldeta_h=\mathcalQ_h\boldsymbol{u}-\boldsymbol{u}_h$ is plotted in (\subref{fig:ex7-harmonic}). The plots are on several $z=c$ planes. 
}
\label{fig:ex7}
\end{figure}

\newpage
\appendix

\section{Helmholtz Decomposition}
\begin{theorem}\label{THM:helmholtz-2}
For any vector-valued function $\bu\in [L^2(\Omega)]^3$, there exists a unique $\bpsi\in H_0(curl;\Omega),\
\phi\in H^1(\Omega)/\mathbb{R}$, and $\boldeta\in
\mathbb{H}_{\varepsilon n,0}(\Omega)$ such that
\begin{eqnarray}\label{EQ:helmholtz-2}
&&\bu =\varepsilon^{-1}\nabla\times\bpsi + \nabla\phi + {\boldsymbol\eta},\\
&&\nabla\cdot(\varepsilon\bpsi)=0,\ \langle
\varepsilon\bpsi\cdot\bn_i, 1\rangle_{\Gamma_i} = 0, \ i=1,\ldots, L.\label{EQ:helmholtz-2.2}
\end{eqnarray}
Moreover, the following estimate holds true
\begin{equation}\label{EQ:helmholtz-288}
\|\bpsi\|_{H({\rm curl}; \Omega)} + \|\nabla\phi\|_0 \leqC
(\varepsilon\bu,\bu)^{\frac12}.
\end{equation}
\end{theorem}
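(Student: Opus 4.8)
The plan is to peel off the three pieces in turn by Lax--Milgram arguments, using that $(\varepsilon\,\cdot,\cdot)$ is an inner product on $[L^2(\Omega)]^3$ equivalent to the standard one and that $\curl$ is injective on $\Wspace$. \emph{Step 1 (the $\varepsilon$-gradient part).} Since $\Omega$ is connected, $(\varepsilon\nabla\cdot,\nabla\cdot)$ is bounded and coercive on $H^1(\Omega)/\mathbb{R}$ by the Poincar\'e inequality and positive-definiteness of $\varepsilon$, so there is a unique $\phi\in H^1(\Omega)/\mathbb{R}$ with $(\varepsilon\nabla\phi,\nabla\chi)=(\varepsilon\bu,\nabla\chi)$ for all $\chi\in H^1(\Omega)$. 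Put $\bw:=\bu-\nabla\phi$; then $(\varepsilon\bw,\nabla\chi)=0$ for all $\chi\in H^1(\Omega)$, which yields $\nabla\cdot(\varepsilon\bw)=0$ in $\Omega$ and $\varepsilon\bw\cdot\bn=0$ on $\Gamma$ (hence $\langle\varepsilon\bw\cdot\bn_i,1\rangle_{\Gamma_i}=0$ for $i=1,\dots,L$), while the Pythagorean identity $(\varepsilon\bw,\bw)=(\varepsilon\bu,\bu)-(\varepsilon\nabla\phi,\nabla\phi)$ gives $\|\nabla\phi\|_0+\|\bw\|_0\lesssim(\varepsilon\bu,\bu)^{1/2}$.

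\emph{Step 2 (the vector-potential part).} On the space $\Wspace$ of \eqref{EQ:NewSSpace} there is no nontrivial $\varepsilon$-harmonic field: the $L$ flux constraints $\langle\varepsilon\bv\cdot\bn_i,1\rangle_{\Gamma_i}=0$ annihilate the $L$-dimensional space $\mathbb{H}_{\varepsilon\tau,0}(\Omega)$, on which the flux map $\bv\mapsto(\langle\varepsilon\bv\cdot\bn_i,1\rangle_{\Gamma_i})_{i=1}^L$ is an isomorphism. Hence the Friedrichs--Weber inequality $\|\bv\|_0\lesssim\|\curl\bv\|_0$ holds on $\Wspace$, so $a(\bpsi,\bt):=(\varepsilon^{-1}\curl\bpsi,\curl\bt)$ is bounded and coercive on $\Wspace$, and Lax--Milgram produces a unique $\bpsi\in\Wspace$ with $(\varepsilon^{-1}\curl\bpsi,\curl\bt)=(\bw,\curl\bt)$ for all $\bt\in\Wspace$, with $\|\bpsi\|_{H(curl;\Omega)}\lesssim\|\bw\|_0$. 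Set $\boldeta:=\bw-\varepsilon^{-1}\curl\bpsi$. Then $\nabla\cdot(\varepsilon\boldeta)=0$ (as $\nabla\cdot\curl\bpsi=0$) and $\varepsilon\boldeta\cdot\bn=0$ on $\Gamma$ (as $\bpsi\times\bn=0$ forces $(\curl\bpsi)\cdot\bn=0$ on $\Gamma$ by Green's formula), while the defining identity reads $(\boldeta,\curl\bt)=0$ for all $\bt\in\Wspace$. I would upgrade this to all $\bt\in H_0(curl;\Omega)$ by showing $\curl(\Wspace)=\curl(H_0(curl;\Omega))$: given $\bt\in H_0(curl;\Omega)$, subtract its $\varepsilon$-orthogonal $H^1_0$-gradient and then a suitable element of $\mathbb{H}_{\varepsilon\tau,0}(\Omega)$ (to cancel the fluxes, via the flux isomorphism) to get $\bt_0\in\Wspace$ with $\curl\bt_0=\curl\bt$. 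Thus $(\boldeta,\curl\bt)=0$ for all $\bt\in H_0(curl;\Omega)\supset[C_c^\infty(\Omega)]^3$, so $\curl\boldeta=0$ and $\boldeta\in\mathbb{H}_{\varepsilon n,0}(\Omega)$. Combining Steps 1--2 yields the decomposition \eqref{EQ:helmholtz-2}, the gauge conditions \eqref{EQ:helmholtz-2.2} (built into $\Wspace$), and, by the two bounds above, the estimate \eqref{EQ:helmholtz-288}.

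\emph{Step 3 (uniqueness).} Suppose $\varepsilon^{-1}\curl\bpsi+\nabla\phi+\boldeta=0$ with $\bpsi\in\Wspace$, $\phi\in H^1(\Omega)/\mathbb{R}$, $\boldeta\in\mathbb{H}_{\varepsilon n,0}(\Omega)$. Pairing in $(\varepsilon\,\cdot,\cdot)$ with $\nabla\phi$ and integrating by parts kills the $\curl\bpsi$ term (since $\bpsi\times\bn=0$) and the $\boldeta$ term (since $\nabla\cdot(\varepsilon\boldeta)=0$, $\varepsilon\boldeta\cdot\bn=0$), leaving $(\varepsilon\nabla\phi,\nabla\phi)=0$, so $\phi=0$; pairing the remainder with $\boldeta$ and integrating by parts (using $\curl\boldeta=0$, $\bpsi\times\bn=0$) gives $(\varepsilon\boldeta,\boldeta)=0$, so $\boldeta=0$; then $\curl\bpsi=0$, and Friedrichs--Weber on $\Wspace$ forces $\bpsi=0$.

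The step I expect to be the main obstacle is the topological input for the multiply-connected, non-smooth domain $\Omega$: the Friedrichs--Weber inequality on $\Wspace$ (Maxwell compactness) and the flux isomorphism on $\mathbb{H}_{\varepsilon\tau,0}(\Omega)$ that underpins $\curl(\Wspace)=\curl(H_0(curl;\Omega))$. Rather than reprove these, I would cite the theory of vector potentials and harmonic fields in Lipschitz domains (e.g.\ Amrouche--Bernardi--Dauge--Girault; Girault--Raviart), noting that the weight $\varepsilon\in[L^\infty(\Omega)]^{3\times 3}$ enters only through the equivalent inner product and never needs to be differentiated.
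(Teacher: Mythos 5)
Your proposal is correct, and it reaches the decomposition by a route that differs from the paper's in two respects. First, the ordering: the paper solves the curl problem on $\Wspace$ with the full field $\bu$ on the right-hand side and only afterwards splits the curl-free remainder $\bu-\varepsilon^{-1}\nabla\times\bpsi$ into $\nabla\phi+\boldeta$ (a step it asserts without detail), whereas you peel off $\nabla\phi$ first via a weighted Neumann problem and then apply Lax--Milgram to the $\varepsilon$-solenoidal remainder $\bw$; your order makes the identification of $\boldeta$, the uniqueness argument, and the Pythagorean bookkeeping behind \eqref{EQ:helmholtz-288} completely explicit, all of which the paper's sketch largely omits. Second, the mechanism for upgrading the Euler--Lagrange identity from test functions in $\Wspace$ to all of $H_0(curl;\Omega)$: the paper re-poses \eqref{EQ:May20-100} as a saddle-point problem with a Lagrange multiplier $p\in H^1_{0c}(\Omega)$ and shows $p=0$, while you prove the range equality $\nabla\times(\Wspace)=\nabla\times(H_0(curl;\Omega))$ directly by correcting an arbitrary $\bt\in H_0(curl;\Omega)$ with a gradient and an element of $\mathbb{H}_{\varepsilon\tau,0}(\Omega)$. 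The two devices are interchangeable; yours avoids the inf-sup discussion at the price of invoking the flux isomorphism on the $L$-dimensional space $\mathbb{H}_{\varepsilon\tau,0}(\Omega)$, which you correctly flag as the topological input to be cited. Both arguments ultimately rest on the same coercivity estimate, the Friedrichs-type inequality $\|\bv\|_0\lesssim\|\nabla\times\bv\|_0$ on $\Wspace$, which the paper proves via the Girault--Raviart vector potential and you take as given. One small simplification: if in your range-equality step you subtract the $\varepsilon$-orthogonal gradient taken from $H^1_{0c}(\Omega)$ rather than from $H^1_0(\Omega)$, the flux constraints are met automatically and the correction by $\mathbb{H}_{\varepsilon\tau,0}(\Omega)$ becomes unnecessary.
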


\begin{proof} The following is a sketch of the proof.
Consider the problem of seeking $\bpsi\in \Wspace$ such that
\begin{equation}\label{EQ:May20-100}
(\varepsilon^{-1}\nabla\times\bpsi, \nabla\times\bvarphi)=(\bu,
\nabla\times\bvarphi), \quad \forall\ \bvarphi\in \Wspace.
\end{equation}
Denote by
$$
a(\bpsi,\bvarphi):=(\varepsilon^{-1}\nabla\times\bpsi, \nabla\times\bvarphi)
$$
the bilinear form defined on $\Wspace$.
We claim that $a(\cdot,\cdot)$ is coercive with respect to the
$H({\rm curl}; \Omega)$-norm. To this end, it suffices to derive the
following estimate
\begin{equation}\label{EQ:May20-101}
\|\bv\|_0 \lesssim \|\nabla\times\bv\|_0, \qquad \bv\in\Wspace.
\end{equation}
In fact, for $\bv\in \Wspace$, from
Theorem 3.4 (Chapter 1) of \cite{girault-raviart}, there exists a vector
potential function $\bomega\in [H^1(\Omega)]^3$ such that
\begin{equation}\label{ForCM}
\varepsilon\bv = \nabla\times\bomega,\ \nabla\cdot\bomega=0,\
\|\bomega\|_1\leqC (\varepsilon\bv,\bv)^{\frac12}.
\end{equation}
Using the integration by parts and the condition $\bv\times\bn=0$ on
$\Gamma$, we have
$$
(\varepsilon\bv,\bv) = (\nabla\times\bomega, \bv)=(\bomega,\nabla\times\bv).
$$
It follows from the Cauchy-Schwarz inequality and (\ref{ForCM}) that
$$
(\varepsilon\bv,\bv) \leq \|\bomega\|_0\ \|\nabla\times\bv\|_0 \lesssim
(\varepsilon\bv,\bv)^{\frac12}\ \|\nabla\times\bv\|_0,
$$
which implies (\ref{EQ:May20-101}).

Now from the Lax-Milgram Theorem, there exists a unique $\bpsi\in
\Wspace$ satisfying the equation (\ref{EQ:May20-100}) such that
$$
\|\bpsi\|_{H({\rm curl}; \Omega)}
\lesssim (\varepsilon
\bu,\bu).
$$
It is easy to see that $\Wspace$ is
equivalent to the following quotient space:
$$
H_0({\rm curl};\Omega)/(\nabla H_{0c}^1(\Omega)) = \{\bv\in H_0({\rm
curl};\Omega):\ (\varepsilon\bv, \nabla\phi)=0,\ \forall \phi\in
H_{0c}^1(\Omega) \}.
$$
Thus, by using a Lagrangian multiplier $p\in H_{0c}^1(\Omega)$, the
problem (\ref{EQ:May20-100}) can be re-formulated as follows: Find
$\bpsi\in H_0({\rm curl};\Omega)$ and $p\in H_{0c}^1(\Omega)$ such
that
\begin{equation}\label{EQ:May20-102}
\begin{split}
(\varepsilon^{-1}\nabla\times\bpsi, \nabla\times\bvarphi)+(\varepsilon\nabla p,
\bvarphi)&=(\bu,
\nabla\times\bvarphi),\quad \forall\ \bvarphi\in H_0({\rm curl};\Omega),\\
(\bpsi, \varepsilon\nabla s)&=0,\quad\qquad\qquad\forall\ s\in
H_{0c}^1(\Omega).
\end{split}
\end{equation}
It follows from the first equation of (\ref{EQ:May20-102}) that
$$
\nabla\times(\bu - \varepsilon^{-1}\nabla\times \bpsi) - \varepsilon\nabla p = 0.
$$
Since $p\in H_{0c}^1(\Omega)$, the two terms on the left-hand
side of the above equation are orthogonal in the $\varepsilon^{-1}$-weighted
$L^2(\Omega)$ norm. Thus, we have
$$
\nabla\times(\bu - \varepsilon^{-1}\nabla\times \bpsi) =0,
$$
which gives
$$
\bu - \varepsilon^{-1}\nabla\times \bpsi \in H^0({\rm curl};\Omega).
$$
Thus, there exist unique $\phi\in H^1(\Omega)/\mathbb{R}$ and
$\boldeta\in \mathbb{H}_{\varepsilon n,0}(\Omega)$ such that
$$
\bu - \varepsilon^{-1}\nabla\times \bpsi =\nabla \phi + \boldeta,
$$
which completes the proof of the theorem.
\end{proof}

\section*{Acknowledgement}
 We would like to express our gratitude to Dr. Long Chen (UCI) for his valuable discussion and suggestions. 


\end{document}